\newtheorem{thm}{Theorem}[section]
\newtheorem{lm}[thm]{Lemma}
\newtheorem{co}[thm]{Corollary}
\newtheorem{pr}[thm]{Proposition}
\theoremstyle{definition}
\newtheorem{df}[thm]{Definition}
\newtheorem{exm}[thm]{Example}
\newtheorem{rem}[thm]{Remark}
\newtheorem{rems}[thm]{Remarks}
\newtheorem{que}[thm]{Question}
\newtheorem{conj}[thm]{Conjecture}
\numberwithin{equation}{section}
\DeclareMathOperator{\LinC}{Lin_\CC}
\DeclareMathOperator{\wtLinC}{\wt{Lin}_{\CC}}
\newcommand*{\wh}{\widehat}
\newcommand*{\wt}{\widetilde}
\newcommand{\ptn}{\mathbin{\widehat{\otimes}}}
\newcommand{\GL}{\mathop{\mathrm{GL}}\nolimits}
\newenvironment{mycompactenum}{\pltopsep=5pt\begin{compactenum}[\upshape (i)]}%
{\end{compactenum}}
\newcommand*{\cA}{\mathscr A}
\newcommand{\cO}{\mathcal{O}}
\newcommand*{\cR}{\mathcal R}
\newcommand{\CC}{\mathbb{C}}
\newcommand{\R}{\mathbb{R}}
\newcommand{\Z}{\mathbb{Z}}
\newcommand{\N}{\mathbb{N}}
\newcommand*{\fn}{\mathfrak{n}}
\renewcommand{\le}{\leqslant}
\renewcommand{\ge}{\geqslant}
\let \al         =\alpha
\let \be         =\beta
\let \ga         =\gamma
\let \de         =\delta
\let \ep         =\varepsilon      
\let \io         =\iota
\let \ka         =\varkappa
\let \la         =\lambda
\let \si         =\sigma
\let \up         =\upsilon
\let \om         =\omega
\let \Ga         =\Gamma
\let \De         =\Delta
\let \La         =\Lambda
\let \phi         =\varphi
\title{On holomorphic reflexivity conditions for complex Lie groups
}
\author{O. Yu. Aristov}
\email{aristovoyu@inbox.ru}
\thanks{This work was supported by the RFBR grant no. 19-01-00447.}
\begin{document}
\begin{abstract}
We consider Akbarov's holomorphic version of the non-commutative Pontryagin duality for a complex Lie group. We prove, under the assumption that~$G$ is a Stein group  with finitely many   components, that (1)~the topological Hopf algebra of holomorphic functions on~$G$  is holomorphically reflexive if and only if~$G$ is linear; (2)~the dual cocommutative topological Hopf algebra of exponential analytic functional on~$G$ is holomorphically reflexive.
We give a counterexample, which shows that the first criterion cannot be extended to the case of infinitely many components. Nevertheless, we conjecture  that, in general, the question can be solved  in terms of the Banach-algebra linearity of~$G$.
\end{abstract}

 \maketitle

\section{Introduction}
\label{sec:intr}

We consider a holomorphic version of the non-commutative Pontryagin duality and examine conditions for the holomorphic reflexivity for complex Lie groups. The research of holomorphic duality and its extension from abelian
to general complex Lie groups (and far --- to quantum groups) was
initiated by Akbarov \cite{Ak08}. The current article continues the
line and gives some positive and negative results. We exhibit a counterexample showing that a claim and a more general conjecture made by Akbarov are too optimistic in general. Notwithstanding, the conjecture can be proved for Stein groups with finitely many components. Moreover, we formulate a possible necessary and sufficient condition for the general case.

\subsection*{Holomorphic Pontryagin duality}
In \cite{Ak08}, Akbarov made an observation that a holomorphic
version of the classical Pontryagin duality makes sense for
abelian complex Stein Lie groups. For such a group~$G$, one can consider the
dual group~$G^\bullet$ consisting, by definition, of holomorphic characters, i.e., homomorphisms to~$\CC^\times$,
the  multiplicative group of complex numbers. Then there are isomorphisms, which are analogous to the duality relations for~$\R$ and~$\mathbb{T}$ in the continuous version:
$$
 \CC^\bullet\cong \CC,\qquad(\CC^\times)^\bullet\cong \Z, \qquad\Z^\bullet\cong
 \CC^\times\,.
$$
Moreover, it is not hard to prove
a reflexivity result: \emph{If $G$ is a compactly generated
abelian Stein group, then the naturally defined holomorphic
homomorphism $G^{\bullet\bullet}\to G$ is a complex Lie group
isomorphism} [ibid., Theorem~7.2]. Two
restrictions accepted here  are essential by the following reasons.
First, we need sufficiently many holomorphic functions
(particularly, characters) on~$G$, so we consider only Stein
groups and do not consider such groups as $\CC/(\Z+i\Z)$. Secondly, we assume  that~$G$ is
compactly generated to endow~$G^\bullet$ with a Lie group structure  and
reject such groups as the countable  sum of copies of~$\Z$.

Beginning from the  observation above Akbarov made a step into the ``quantum-group
world'' with use of topological Hopf algebras. Before formulating his idea, which is basic to our
consideration, we briefly discuss an approach proposed by
Bonneau, Flato, Gerstenhaber and Pinczon in \cite{BFGP}.

\subsection*{The BFGP duality schema}
Recall that  a \emph{$\ptn$-algebra} (\emph{$\ptn$-coalgebra},
\emph{$\ptn$-bialgebra}, \emph{Hopf $\ptn$-algebra}) is an algebra (coalgebra,
bialgebra, Hopf algebra) in the symmetric monoidal category of
complete locally convex spaces endowed with the bifunctor
$(-)\ptn(-)$ of the complete projective tensor
product~\cite{Lit2,Pir_stbflat}. (Since $(-)\ptn(-)$ linearizes
jointly continuous bilinear maps, a $\ptn$-algebra is just a
complete locally convex algebra with jointly continuous
multiplication.) In this article, we consider only unital algebras.

In particular, a Hopf $\ptn$-algebra is defined as a $\ptn$-bialgebra $(H,m,u,\De,\ep)$ endowed with a continuous linear map~$S$ that satisfies the main antipode axiom: the diagram
\begin{equation*}
  \xymatrix{
 H\ptn H\ar[d]^{S\ptn 1}&\ar[l]_{\De}H\ar[d]^{u\ep}\ar[r]^{\De}
 &H\ptn H\ar[d]^{1\ptn S}\\
H\ptn H\ar[r]_m&H&H\ptn H\ar[l]^m\,, }
\end{equation*}
where $H$ is a $\ptn$-algebra  with respect to the multiplication $m$ and the unit $u\!:\CC\to H$, and a
$\ptn$-coalgebra with respect to the comultiplication $\De$ and the counit $\ep\!:H\to \CC$, commutes.

Following \cite{BFGP} (cf.~\cite{Lit2}) we say that a
$\ptn$-algebra ($\ptn$-coalgebra, $\ptn$-bialgebra, Hopf
$\ptn$-algebra)~$A$ is \emph{well-behaved} if it is either a
nuclear Fr\'echet  or (complete) nuclear (DF)-space. Whereas the definition
is somewhat artificial,  this class is very useful because of good
duality properties. Indeed, the strong dual space~$E'$ of a nuclear
Fr\'echet space~$E$  is a complete nuclear (DF)-space, and vice
versa. In both cases, for a well-behaved~$A$, there is a canonical topological isomorphism
$A'\ptn A'\cong (A\ptn A)'$, which allows us to endow $A'$ with a
structure of a well-behaved $\ptn$-coalgebra ($\ptn$-algebra, $\ptn$-bialgebra,
Hopf $\ptn$-algebra, respectively). Moreover,
$A\mapsto A'$ is a contravariant endofunctor between   the
corresponding categories.

Recall that for a complex Lie group $G$, the commutative $\ptn$-algebra $\cO(G)$
of holomorphic functions on $G$ is endowed with a Hopf
$\ptn$-algebra structure by setting
\begin{equation}\label{coomHopf}
 \De (f)(g, h) = f(gh),\quad \ep(f) = f(1),\quad  (Sf)(g) = f(g^{-1})\,,
\end{equation}
where $f\in\cO(G)$ and $g,h\in G$. (We use the identification
$\cO(G)\ptn \cO(G)\cong \cO(G\times G)$.) Since $\cO(G)$ is a
nuclear Fr\'echet space, the correspondence $G\mapsto\cO(G)$ can be
extended to a contravariant functor from the category of complex
Lie groups to the category of commutative well-behaved Hopf
$\ptn$-algebras. Also, the space
$\cA(G):= \cO(G)'$ of analytic functionals is also a well-behaved Hopf $\ptn$-algebra with respect to the convolution and the comultiplication, counit and antipode determined by
\begin{equation}\label{cocmHopf}
\De(\de_g)=\de_g\otimes\de_g,\quad\ep(\de_g)=1,\quad S(\de_g)=\de_{g^{-1}}\,,
\end{equation}
respectively, where $\de_g$ denotes the delta-function at $g\in G$.
Moreover, $G\mapsto\cA(G)$ is a covariant functor to the category of
cocommutative well-behaved Hopf $\ptn$-algebras.

This direct approach to duality  has advantages and sometimes works perfectly (see, e.g., \cite{Pir_stbflat}) but also has drawbacks. For example, the strong dual of $\cO(\CC)$ is
$\cA(\CC)$, which is isomorphic to $\cO_{exp}(\CC^\bullet)$, the algebra of holomorphic
functions of exponential type~$\CC^\bullet$, and obviously strictly smaller than $\cO(\CC^\bullet)$. So the strong
duality functor is not compatible with the holomorphic version of
the Pontryagin duality.

\subsection*{Akbarov's duality schema and its modification}

Recall that the \emph{Arens-Michael envelope} of  a $\ptn$-algebra~$A$ is its completion with respect to the topology determined by all possible continuous submultiplicative prenorms~$\|\cdot\|$, i.e.,  satisfying $\|ab\|\le\|a\|\,\|b\|$ for all $a,b\in A$ \cite{He93}. Since we assume that~$A$ has the identity~$1$,   we can assume that $\|1\|=1$ by Gelfand's Lemma; see
~\cite[Proposition~2.1.9]{Da00} or~\cite[Proposition~1.1.9]{Pa1}.
The Arens-Michael  envelope of a $\ptn$-algebra~$A$ is denoted by~$\wh A$. (The notation in  \cite{Ak08}  is~$A^\heartsuit$.) Pirkovskii observed that the Arens-Michael envelope is an endofunctor on the category
of Hopf $\ptn$-algebras  \cite[Proposition~6.7]{Pir_stbflat}, which obviously takes values in the subcategory of Arens-Michael Hopf algebras.

\begin{df}\label{Homduldef}
The \emph{holomorphic dual} of  an Arens-Michael,
well-behaved Hopf $\ptn$-algebra~$H$ is the Arens-Michael
Hopf $\ptn$-algebra
\begin{equation}\label{defhold}
H^\bullet\!:=(H')\sphat\,.
\end{equation}
\end{df}
Of course, $H^\bullet$ can be considered for any
well-behaved Hopf $\ptn$-algebra $H$ but we are interested in the
Arens-Michael case.

Our approach is close to that in \cite{Ak08} but it is different in two points.  First, Akbarov introduces his duality schema in the category of stereotype Hopf algebras but we work in more narrow context and use strong dual locally convex spaces instead of stereotype dual spaces --- that is sufficient  for the cases considered in the current article. Secondly,  we are interested in the duality between $\cO(G)$ and $\cA_{exp}(G)$  when~$G$ is a compactly generated complex Lie group. (The space $\cA_{exp}(G)$ is defined as the strong dual space of $\cO_{exp}(G)$; its elements are called  \emph{exponential analytic functionals}.) On the other hand, Akbarov considers the duality between $\cA(G)$ and $\cO_{exp}(G)$  applying the operations in the reverse order: first the Arens-Michael envelope and next the dual functor in the category of stereotype spaces \cite[Section~6.5, p.~549]{Ak08}.
Whereas our definition of duality (as well as the definition of reflexivity; see below) is formally
different it is the same in essence. But there are several reasons to consider the Arens-Michael Hopf $\ptn$-algebras, e.g., $\cO(G)$ and $\cA_{exp}(G)$, better than their strong duals.

(1)~It is easier to check that a $\ptn$-algebra is an Arens-Michael algebra than to check that a  $\ptn$-coalgebra is the strong dual of an Arens-Michael algebra.

(2)~$\cO(G)$ can be characterized in the spirit of the first Gelfand-Naimark theorem (see my paper \cite{AHHFG}).

(3) There is an analogy with the construction of the Pontryagin duality extended to non-commutative locally compact groups (as well as $C^*$-algebraic quantum groups). The first step in~\eqref{defhold}, the strong dual functor, is justified by the fact that there is a one-one correspondence between  holomorphic representations of a complex Lie group~$G$ on a Banach space~$X$ and non-degenerate continuous representations of $\cA(G)$ on~$X$ (see \cite{Lit} or \cite[(5.1)--(5.3)]{ArAnF}). The second step, the Arens-Michael enveloping functor, corresponds to  completion of $\cA(G)$ with respect to all prenorms associated with continuous Banach-space representations. Similarly, it is well known that
there is a one-one correspondence between strongly continuous unitary representations of a locally compact group~$G$ on a
Hilbert space~$H$ and non-degenerate (automatically continuous) $*$-representations of $L^1(G)$ on~$H$; see, e.g., \cite[Theorem II.10.2.2]{Bl06}. In general, $L^1(G)$, which is a dual Banach space to $C_0(G)$, is not a $C^*$-algebra but its $C^*$-envelope, which can be obtain by completion  with respect to all the prenorms associated with Hilbert-space $*$-representations, is traditionally considered as a dual object to~$C_0(G)$.

(4)~When we consider an arbitrary  well-behaved Hopf $\ptn$-algebra~$H$, the Hopf $\ptn$-algebra $H^\bullet$ is always well defined but there is no obvious reason for $\wh H$ to be well-behaved and so for $(\wh H)'$ to be well defined.

\subsection*{The definition of holomorphic reflexivity}
There is no
motive to claim that $H^\bullet$ is always well-behaved. But we want
to apply our duality functor twice, and so  an additional
restriction is necessary. In the  definition below,
we consider holomorphic duality and reflexivity in the more narrow
context of nuclear Fr\'echet Arens-Michael Hopf
$\ptn$-algebras  (in short, \emph{NFAM Hopf
algebras})  that allows us to use the
strong dual spaces as in the BFGP duality schema.

Note that $H\mapsto H^\bullet$ is a contravariant functor from the
category of Arens-Michael, well-behaved Hopf $\ptn$-algebras to
the category of Arens-Michael Hopf $\ptn$-algebras.
When restricted to the category of NFAM Hopf
algebras it can be considered as an extension  of the
holomorphic duality functor for abelian compactly generated Stein groups. Indeed, if $G$ is such a group, then
$\cO(G)^\bullet\cong \cO(G^\bullet)$
\cite[Theorem 7.2]{Ak08} and this isomorphism is functorial.

Suppose that $H$ is an  NFAM Hopf algebra. In general, we cannot guarantee that $H^\bullet$ is a Fr\'echet  space.
But when this is the case,  $H^\bullet$ is also an NFAM Hopf algebra because   $H^\bullet$  is
nuclear by Corollary~\ref{conuclAMe} in the Appendix.
Under the assumption that $H^\bullet$ is a Fr\'echet  space we have.
the homomorphism of
well-behaved Hopf $\ptn$-algebras $(H^\bullet)'\to H''$ given by the
application of the strong dual functor to the Arens-Michael
envelope homomorphism $H'\to H^\bullet$. Since  $H''\cong H $, we can write the
universal diagram for the Arens-Michael envelope of $(H^\bullet)'$:
\begin{equation}\label{defioH}
  \xymatrix{
(H^\bullet)' \ar[r]\ar[rd]&H^{\bullet\bullet}\ar@{-->}[d]^{\ka_H}\\
 &H\\
 }
\end{equation}
By \cite[Proposition~6.7]{Pir_stbflat}, $\ka_H$ is a Hopf $\ptn$-algebra
homomorphism.

\begin{df}\label{Homrelfdef}
Suppose that $H$ is an NFAM Hopf $\ptn$-algebra $H$ such that $H^\bullet$  is a
Fr\'echet space.
If  the Hopf $\ptn$-algebra homomorphism
$$
\ka_H\!:H^{\bullet\bullet}\to H
$$
defined in \eqref{defioH} is a topological isomorphism, then~$H$
is said to be
\emph{holomorphically reflexive}.
\end{df}

When we discuss below Akbarov's results and conjectures on holomorphic
duality and reflexivity, we always mean the reformulation in the sense of Definitions~\ref{Homduldef} and~\ref{Homrelfdef}.

The following question is a general challenge for research.
\begin{que}
Under which conditions an NFAM Hopf $\ptn$-algebra is
holomorphically reflexive?
\end{que}

The problem is rather wide and needs some additional
limitation. In this article, we restrict ourselves to commutative and
cocommutative NFAM Hopf $\ptn$-algebras arising from complex Lie groups. The investigation of
``quantum-group nature'' examples  is also initiated in
\cite[Section~8]{Ak08}. Other interesting examples of this type will be
considered in my subsequent paper.

\subsection*{Main results}
The basic aim of this article is to find conditions for holomorphic reflexivity of $\cO(G)$ and $\cA_{exp}(G)$ in the case when~$G$ is a compactly generated complex Lie group. It was proved by Akbarov in~\cite{Ak08} that $\cO(G)$ and $\cA_{exp}(G)$ are holomorphically reflexive  when $G$ is abelian (see the discussion above), discrete or isomorphic to $\GL_n(\CC)$ for some $n\in\N$. Also he claimed that $\cO(G)$ and $\cA_{exp}(G)$ are holomorphically
reflexive for a wide class of Lie groups, compactly generated Stein groups having algebraic
component of the identity \cite[Theorem~6.4]{Ak08}, and conjectured that this claim is also true when the component is linear \cite[pp.~540--541]{Ak17A}. Unluckily, the argument proposed in~\cite{Ak08} contains a gap (in the proof of \cite[Lemma~6.6]{Ak08}) and also a mistake. The gap was filled in my paper~\cite{ArAnF} and this allows us to prove the claim and conjecture for connected groups. Moreover, we show in the current article that, \emph{under the additional assumption that $G$~is a Stein group  with finitely many components, $\cO(G)$ is holomorphically reflexive if and only if~$G$ is linear} (Theorem~\ref{OGhrcon1}) \emph{and  $\cA_{exp}(G)$ is always holomorphically reflexive} (Corollary~\ref{AeGhr}).
Most of the preparatory work  was done for connected groups in~\cite{ArAnF} and, in this case,
it is not hard to deduce the criterion from~\cite[Theorem~5.12]{ArAnF}. But the argument for groups with finitely many components includes some additional results.

On the other hand, the assumption of \cite[Theorem~6.4]{Ak08}, namely, $G_0$, the component of the identity, is algebraic, does not imply the holomorphic reflexivity in the general case of infinitely many components. Indeed, in the key step of Akbarov's argument
(proof of \cite[Theorem~6.3]{Ak08}) the following implication is used: if~$G$ is a compactly generated Stein group and  the range of $\cO_{exp}(G_0)\to \cO(G_0)$ is dense, so is  the range of $\cO_{exp}(G)\to \cO(G)$.
But this is not the case. To explain the reason we note that, technically, the question of
holomorphic reflexivity of $\cO(G)$ (and, to some extent of $\cA_{exp}(G)$)
is reduced to an auxiliary question: whether or not
$\cO_{exp}(G)\to \cO(G)$ and $\cA(G)\to \cA_{exp}(G)$ are
Arens-Michael envelopes? (See Proposition~\ref{AMeqHR} and the proof of Theorem~\ref{AeGhr-1} below.) The answer to the second part of the question is positive: if~$G$ is a Stein group, then  the naturally defined map
$\cA(G)\to \cA_{exp}(G)$ is an Arens-Michael envelope
\cite[Theorem~6.2]{Ak08}.

The first part of the question is more difficult. In this paper, we repair Akabrov's argument in the case when the group  has finitely many components and show that, under this assumption $\cO_{exp}(G)\to \cO(G)$, is an Arens-Michael envelope if and only if $G$~is linear (Theorem~\ref{OGhrcon}).  The necessity is almost straightforward; cf. \cite[Theorem 5.3(C)]{ArAnF}. The main complexity is in the sufficiency: although $\cO_{exp}(G)$ is dense in $\cO(G)$ for every linear~$G$ \cite[Theorem~5.9]{Ak08}, this fact does imply immediately that the embedding is an Arens-Michael envelope. Fortunately,  there is a workaround --- we can find a subalgebra in $\cO_{exp}(G)$ that has $\cO(G)$ as an Arens-Michael envelope. Specifically, a linear group admits a structure of affine algebraic variety and we can use the algebra $\cR(G)$ of all regular functions. The only known way to show that $\cR(G)\subset \cO_{exp}(G)$ is to describe explicitly the structure of the latter algebra.  This approach requires some technical results, which were proved  in \cite{ArAMN,ArAnF} for connected groups. We use them in the proof of Corollary~\ref{expclin-1}, which refers to
\cite[Theorem~5.12(B)]{ArAnF}.

In the case when $G$ is compactly generated but the number of   components is infinite, the answer for the question can be negative even if the   component of the identity is  algebraic, as one can see from the following example.

\begin{exm}\label{countex}
First consider the following subgroup
\begin{equation}\label{3Hei}
H\!:=\left\{\begin{pmatrix}
 1& m& b\\
 0&1 & n\\
 0 & 0&1
\end{pmatrix}:\,\,m,n\in \Z_,\,b\in\CC\right\}
\end{equation}
of  the $3$-dimensional complex Heisenberg group
and the discrete central subgroup of~$H$  given by
$$
N\!:=\left\{\begin{pmatrix}
1& 0& k\\
 0&1 & 0\\
 0 & 0&1
\end{pmatrix}:\,\,k\in\Z\right\}\,.
$$
The quotient $G\!:=H/N$ is identified  with $\CC^\times\times\Z^2$ endowing with the
group law
$$
 (z,m,n)\cdot (z',m',n')\!:= (zz'e^{nm'},m+m',   n+n')\,.
$$
The same argument as for  the quotient of the Heisenberg group  (see \cite[Introduction]{ArAnF})
shows that the coordinate
function $(z,m,n)\mapsto z$ is not of exponential type (see the definition in \S\,\ref{sec:AMenv}). Moreover, any holomorphic function of exponential type on~$G$ is constant on the   component~$G_0$ of the identity. (Note that~$G_0$  is isomorphic to $\CC^\times$.) So the range of $\cO_{exp}(G)\to \cO(G)$ is not dense and therefore this homomorphism is not an  Arens-Michael envelope. Moreover, $\cO(G)$ is not holomorphically reflexive; see details in Corollary~\ref{Ctimescoin} below. On the other hand,~$G_0$ is linear and, moreover, algebraic and the range of
$\cO_{exp}(G_0)\to \cO(G_0)$ is obviously dense.
\end{exm}

\subsection*{Conjectures}
The counterexample above demonstrates that the holomorphic reflexivity of $\cO(G)$ does not follow from the linearity of the group or its component of the identity. To find a general sufficient condition we suggest to examine the Banach-algebra linearity instead. Recall that the linearizer $\LinC(G)$ of a complex Lie group~$G$ (which the intersection of the kernels of all finite-dimensional holomorphic representations) is a ``measure of non-linearity'' of~$G$. A careful analysis of the proofs of our main results and Example~\ref{countex} shows that, for non-connected groups, it is natural to consider the generalized linearizer of~$G$: the intersection
of the kernels of all holomorphic homomorphisms
of~$G$ to the groups of invertible elements of Banach algebras. (The notation is $\wtLinC(G)$).
Besides being of independent interest, this subgroup enables us with a natural terminology to formulate hypotheses on the holomorphic reflexivity for general compactly generated groups. It is not hard to show that, for  a Stein group~$G$, the subgroup $\wtLinC(G)$  is trivial provided that $\cO(G)$ is
holomorphically reflexive  (see Proposition~\ref{tLinness} below).

\begin{conj}\label{mainconj}
If $G$ is a compactly generated Stein group, then the condition $\wtLinC(G)=\{1\}$ is not only necessary but also sufficient for the holomorphic reflexivity of~$\cO(G)$.
\end{conj}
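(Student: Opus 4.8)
The plan is to reduce the conjecture to a statement about Arens--Michael envelopes and then to attack the sufficiency direction, the necessity being already available as Proposition~\ref{tLinness}. First I would record the double-dual computation $\cO(G)^{\bullet\bullet}\cong\wh{\cO_{exp}(G)}$. Indeed, with $H=\cO(G)$ one has $H'=\cA(G)$, hence $H^\bullet=\wh{\cA(G)}=\cA_{exp}(G)$ by \cite[Theorem~6.2]{Ak08}; dualizing once more and applying the enveloping functor gives $H^{\bullet\bullet}=\wh{(\cA_{exp}(G))'}=\wh{\cO_{exp}(G)}$, while $\ka_H$ becomes the canonical map $\wh{\cO_{exp}(G)}\to\cO(G)$ induced by the inclusion $\cO_{exp}(G)\hookrightarrow\cO(G)$, the requisite Fr\'echet and nuclearity properties of $H^\bullet$ following from the Stein hypothesis and Corollary~\ref{conuclAMe}. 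Thus, exactly as in Proposition~\ref{AMeqHR}, $\cO(G)$ is holomorphically reflexive if and only if $\cO_{exp}(G)\to\cO(G)$ is an Arens--Michael envelope, and the whole content of the conjecture is that this holds precisely when $\wtLinC(G)=\{1\}$.

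For the sufficiency I would manufacture an exhausting supply of functions of exponential type out of Banach-algebra representations. Assuming $\wtLinC(G)=\{1\}$, for each $g\neq 1$ choose a holomorphic homomorphism $\pi\colon G\to A^\times$ into the invertibles of a Banach algebra with $\pi(g)\neq 1$. Since $G$ is compactly generated, fixing a compact generating set $U$ and setting $M=\sup_{u\in U}\|\pi(u)\|<\infty$ yields $\|\pi(g)\|\le M^{\ell(g)}$ for the word length $\ell$, so each matrix coefficient $g\mapsto\phi(\pi(g))$ with $\phi\in A'$ is dominated by the submultiplicative weight $g\mapsto M^{\ell(g)}$ and therefore lies in $\cO_{exp}(G)$ (dually to the fact that $\cA(G)\to\cA_{exp}(G)$ is the Arens--Michael envelope). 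Let $\cB\subseteq\cO_{exp}(G)$ be the subalgebra generated by all such coefficients; by construction it separates the points of $G$.

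The decisive step is to promote this pointwise separation to the two properties characterizing the envelope: (a) $\cB$ is dense in $\cO(G)$, and (b) $\cO(G)$ carries the topology generated by the continuous submultiplicative prenorms on $\cB$, so that $\wh{\cB}\cong\cO(G)$ and, by the squeeze $\cB\subseteq\cO_{exp}(G)\subseteq\cO(G)$, also $\wh{\cO_{exp}(G)}\cong\cO(G)$. For (a) I would invoke Stein theory: a holomorphic subalgebra that separates points and tangent directions is dense, and tangent separation at the identity is automatic here, since $\exp$ carries $\bigcap_\pi\ker d\pi$ into $\wtLinC(G)=\{1\}$, forcing $\bigcap_\pi\ker d\pi=0$. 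For (b) I would follow the template of the linear case (Corollary~\ref{expclin-1} and \cite[Theorem~5.12]{ArAnF}), where the regular functions $\cR(G)$ play the role of $\cB$: one must show every continuous submultiplicative prenorm on $\cB$ is dominated by a compact-set sup-norm of $\cO(G)$, i.e.\ bound the representations uniformly on compacta. I would organize this along the extension $1\to G_0\to G\to\Ga\to 1$ with $\Ga=G/G_0$ finitely generated discrete, using the settled connected case of \cite{ArAnF} for $G_0$ and treating the extension by $\Ga$ separately.

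The main obstacle I anticipate is precisely step (b) in the presence of infinitely many components. Density of $\cO_{exp}(G)$ alone never suffices --- the paper stresses this even for linear groups --- and for finitely many components the difficulty was overcome by the purely algebraic model $\cR(G)\subseteq\cO_{exp}(G)$ (Theorem~\ref{OGhrcon}). When $\Ga$ is infinite there is no such regular model, and the twisting of $G_0$ by $\Ga$ can, as in Example~\ref{countex}, collapse every Banach representation on $G_0$; it is exactly this collapse that $\wtLinC(G)=\{1\}$ forbids. Converting the qualitative hypothesis $\wtLinC(G)=\{1\}$ into the quantitative assertion that the Banach representations of $G$ recover the full Fr\'echet topology of $\cO(G)$ --- that is, passing from separation of points to control of every compact-set seminorm --- is the crux, and I expect it to require genuinely new structural input on how Banach-algebra linearity is inherited through the extension by the discrete part~$\Ga$.
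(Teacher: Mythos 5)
The statement you were asked about is a \emph{conjecture}: the paper does not prove it, and explicitly says it is known only under additional hypotheses (abelian, discrete, or finitely many components, the last being Theorem~\ref{OGhrcon1}). Your proposal does not close it either, so there is no complete proof to compare; what can be compared is the partial content. Your first paragraph is correct and coincides with the paper's own reduction: $\cO(G)^{\bullet\bullet}\cong\wh{\cO_{exp}(G)}$ and ``reflexive $\Leftrightarrow$ $\cO_{exp}(G)\to\cO(G)$ is an Arens--Michael envelope'' is exactly Proposition~\ref{AMeqHR}, and the necessity of $\wtLinC(G)=\{1\}$ is exactly Proposition~\ref{tLinness} (via Theorem~\ref{redliexpf}(C) and Lemma~\ref{nesswtL1}). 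Your identification of matrix coefficients of Banach-algebra representations as elements of $\cO_{exp}(G)$, with the bound $\|\pi(g)\|\le M^{\ell(g)}$, is also sound and is the same mechanism the paper uses (cf.\ the citation of \cite[Proposition~5.1]{ArAnF} in Corollary~\ref{coredliexpf}).

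The sufficiency direction, however, has two genuine gaps. First, your step (a) rests on the claim that a holomorphic subalgebra of $\cO(X)$ on a Stein manifold which separates points and tangent directions is dense; this is false in general --- $\CC[z]\subset\cO(\CC^\times)$ separates points and tangents but its closure consists only of restrictions of entire functions (no Runge-type theorem rescues an arbitrary separating subalgebra). Your $\cB$ is inversion-closed, which saves this particular example, but you would still need an actual density argument; the only one available in the paper's circle of ideas is Akbarov's \cite[Theorem~5.9]{Ak08}, which applies to \emph{linear} groups, and linearity of $G$ is strictly stronger than $\wtLinC(G)=\{1\}$ (the paper's point in introducing $\wtLinC$ at all). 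Second, and decisively, your step (b) --- passing from density to the statement that continuous submultiplicative prenorms on $\cB$ (equivalently on $\cO_{exp}(G)$) are dominated by compact sup-norms --- is the entire content of the conjecture, and you concede it requires ``genuinely new structural input.'' A plan whose crux is acknowledged as open is a research program, not a proof; in that respect your proposal lands exactly where the paper does, namely at Conjecture~\ref{mainconj} itself, with the finitely-many-components case (via $\cR(G)$, Corollary~\ref{expclin-1}, and Theorem~\ref{OGhrcon}) as the best currently provable approximation.
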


At the moment, we know that the conjecture is true under any of the following additional assumptions.

(1)~$G$ is abelian \cite{Ak08}.

(2)~$G$ is  zero-dimensional, i.e., discrete, \cite{Ak08}.

(3)~$G$  has finitely many components (Theorem~\ref{OGhrcon1}).

Moreover, there are examples when $G$ is not abelian and not discrete with infinitely many components
but nevertheless $\cO(G)$ is holomorphically reflexive (Example~\ref{infcomphr}).
It follows from Theorem~\ref{AeGhr-1} that if Conjecture~\ref{mainconj} is true in general then so is the following conjecture.

\begin{conj}\label{mainconj2}
If $G$ is a compactly generated complex Lie group, then $\cA_{exp}(G)$ is
holomorphically reflexive.
\end{conj}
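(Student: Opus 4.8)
The plan is to compute the iterated holomorphic dual $\cA_{exp}(G)^{\bullet\bullet}$ explicitly and to identify it with $\cA_{exp}(G)$ via the canonical map $\ka_{\cA_{exp}(G)}$ of~\eqref{defioH}. Throughout I take $G$ to be a compactly generated Stein group, I write $N:=\wtLinC(G)$ for the generalized linearizer and $G_L:=G/N$ for the associated Banach-algebra-linear quotient, and I first record that $\cA_{exp}(G)$ is an NFAM Hopf algebra: it is the Arens-Michael envelope of $\cA(G)$ by \cite[Theorem~6.2]{Ak08}, it is nuclear by Corollary~\ref{conuclAMe}, and it is Fr\'echet as the strong dual of the (DF)-space $\cO_{exp}(G)$. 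The whole strategy is to transport the computation to $G_L$, where exponential-type functions are expected to exhaust the full holomorphic algebra.

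\textbf{Reduction to $G_L$.} Since $N$ is the intersection of the kernels of all holomorphic homomorphisms of $G$ into unit groups of Banach algebras, and since, through the correspondence between holomorphic Banach-space representations of $G$ and non-degenerate continuous $\cA(G)$-modules recalled in the introduction, every function in $\cO_{exp}(G)$ is built from such representations, one expects each element of $\cO_{exp}(G)$ to be constant on the cosets of $N$. This should give a topological Hopf-algebra isomorphism $\cO_{exp}(G)\cong\cO_{exp}(G_L)$; the model case is Example~\ref{countex}, where $N=G_0\cong\CC^\times$ and $\cO_{exp}(G)$ descends to $\Z^2$. Passing to strong duals yields $\cA_{exp}(G)\cong\cA_{exp}(G_L)$, so it suffices to treat $G_L$.

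\textbf{The two holomorphic duals.} By definition $\cA_{exp}(G)^\bullet=(\cA_{exp}(G)')\sphat$, and well-behavedness gives $\cA_{exp}(G)'\cong\cO_{exp}(G)\cong\cO_{exp}(G_L)$; hence
\[
\cA_{exp}(G)^\bullet\cong\wh{\cO_{exp}(G_L)},
\]
the Arens-Michael envelope of $\cO_{exp}(G_L)$. The heart of the argument is the identification $\wh{\cO_{exp}(G_L)}\cong\cO(G_L)$. Granting it, the second dual is formal: $\cA_{exp}(G)^{\bullet\bullet}\cong\cO(G_L)^\bullet\cong\cA_{exp}(G_L)\cong\cA_{exp}(G)$, where the middle step is the standing identity $\cO(L)^\bullet\cong\cA_{exp}(L)$ (combine $\cO(L)'=\cA(L)$ with the Arens-Michael envelope $\cA(L)\to\cA_{exp}(L)$ of \cite[Theorem~6.2]{Ak08}) and the last is the reduction above. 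One then checks that this composite is the canonical map $\ka_{\cA_{exp}(G)}$, using the naturality of the strong-dual and Arens-Michael functors (\cite[Proposition~6.7]{Pir_stbflat}) together with diagram~\eqref{defioH}.

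\textbf{The main obstacle.} Everything above is formal except the identification $\wh{\cO_{exp}(G_L)}\cong\cO(G_L)$, equivalently the assertion that $\cO_{exp}(G_L)\to\cO(G_L)$ is an Arens-Michael envelope for the Banach-linear quotient $G_L$ (this also supplies the Fr\'echet-ness of $\cA_{exp}(G)^\bullet$ required by Definition~\ref{Homrelfdef}). Density of $\cO_{exp}(G_L)$ in $\cO(G_L)$ holds for linear $G_L$ by \cite[Theorem~5.9]{Ak08}, but density does not control the continuous submultiplicative prenorms and so is insufficient --- this is precisely the gap noted in the introduction. For finitely many components the obstacle is overcome by locating the regular-function algebra $\cR(G_L)$ inside $\cO_{exp}(G_L)$ and using that $\cR(G_L)$ has $\cO(G_L)$ as its Arens-Michael envelope (Theorem~\ref{OGhrcon}, Corollary~\ref{expclin-1}). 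For infinitely many components no such algebraic core is available, and controlling all submultiplicative prenorms on $\cO_{exp}(G_L)$ is the genuinely open point; it coincides with Conjecture~\ref{mainconj} for $G_L$, which satisfies $\wtLinC(G_L)=\{1\}$. A preliminary technical hurdle, needed before any of this, is to confirm that $N$ is a closed normal complex Lie subgroup and that $G_L$ is again a compactly generated Stein group, so that the objects $\cO(G_L)$, $\cO_{exp}(G_L)$ and the duality functors are all defined.
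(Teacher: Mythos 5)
Your argument is not a proof, and it cannot be compared against one: the statement you were given is stated in the paper as Conjecture~\ref{mainconj2}, and the paper proves it only (a)~conditionally (Theorem~\ref{AeGhr-1}: if $\cO(G/\wtLinC(G))$ is holomorphically reflexive, then $\cA_{exp}(G)$ is) and (b)~unconditionally for finitely many components (Corollary~\ref{AeGhr}). What you have written reproduces, essentially verbatim, the paper's conditional route. Your ``one expects'' step --- that every function in $\cO_{exp}(G)$ is constant on cosets of $\wtLinC(G)$, giving $\cO_{exp}(G)\cong\cO_{exp}(G_L)$ and dually $\cA_{exp}(G)\cong\cA_{exp}(G_L)$ --- is exactly Theorem~\ref{redliexpf}, so that part is available in full. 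The computation $\cA_{exp}(G)^\bullet\cong\wh{\cO_{exp}(G_L)}$ and the formal second dual via Theorem~\ref{AMEAG} is precisely the proof of Theorem~\ref{AeGhr-1}. The step you correctly isolate as the heart of the matter --- that $\cO_{exp}(G_L)\to\cO(G_L)$ is an Arens--Michael envelope, equivalently (Proposition~\ref{AMeqHR}) that $\cO(G_L)$ is holomorphically reflexive --- is Conjecture~\ref{mainconj} for $G_L$, and it is genuinely open. The paper can establish it only when the group has finitely many components, and by a non-formal device: locating the regular-function algebra $\cR$ inside $\cO_{exp}$ (Theorem~\ref{OGhrcon}, Corollary~\ref{expclin-1}); Example~\ref{countex} shows that with infinitely many components even density of $\cO_{exp}(G)$ in $\cO(G)$ can fail for algebraic $G_0$, so no soft density or prenorm-estimation argument is known to close this. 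In short, you have correctly re-derived the implication ``Conjecture~\ref{mainconj} $\Rightarrow$ Conjecture~\ref{mainconj2}'' that the paper records immediately before stating the conjecture, but the statement itself remains unproved.

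Two secondary points. First, you restrict throughout to Stein $G$, but Conjecture~\ref{mainconj2} and Theorem~\ref{AeGhr-1} concern arbitrary compactly generated complex Lie groups, and the reduction needs no Stein hypothesis on $G$: for a compact torus, for instance, $\wtLinC(G)=G$, $G_L=\{1\}$, and reflexivity of $\cA_{exp}(G)$ still follows from Theorem~\ref{AeGhr-1}; your restriction loses generality for no gain. Second, even granting Conjecture~\ref{mainconj}, invoking it for $G_L$ requires that $G_L$ be a compactly generated \emph{Stein} group with $\wtLinC(G_L)=\{1\}$. Closedness and normality of $\wtLinC(G)$ is Lemma~\ref{lemmcenter}, compact generation passes to quotients, and triviality of $\wtLinC(G_L)$ can be extracted from Corollary~\ref{coredliexpf} together with Theorem~\ref{redliexpf}(B); but Steinness of the quotient $G_L$ is a further unverified assertion that you relegate to a ``preliminary hurdle'' --- it is not addressed in the paper either, which is part of why the general statement is offered only as a conjecture.
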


\section{Banach-algebra linearizer}
\label{sec:genlin}

Recall that  the intersection of the
kernels of all finite-dimensional holomorphic representations
of a complex Lie group~$G$  is called the \emph{linearizer} of~$G$ and denoted by
$\LinC(G)$ \cite[Definition~15.2.13]{HiNe}.
We also introduce $\wtLinC(G)$ as the intersection of the kernels
of all holomorphic homomorphisms of~$G$ to  groups of invertible elements of
Banach algebras. It can be regarded as a useful generalization of the linearizer; see an alternative formulation of the reflexivity condition in Theorem~\ref{OGhrcon1}.
In this section we discuss  basic properties of~$\wtLinC(G)$.

The following theorem is  a reformulation of~\cite[Theorem~2.2]{ArAnF}.
\begin{thm}\label{holhomline}
If a complex Lie group $G$ is connected, then
$$\LinC(G)=\wtLinC(G)\,.$$
\end{thm}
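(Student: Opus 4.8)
The plan is to prove the inclusion $\LinC(G)\subseteq\wtLinC(G)$, which is immediate, and then the reverse inclusion $\wtLinC(G)\subseteq\LinC(G)$, which is the substantive direction. For the easy inclusion, I observe that every finite-dimensional holomorphic representation $\pi\colon G\to\GL_n(\CC)$ is in particular a holomorphic homomorphism into the group of invertible elements of the Banach algebra $M_n(\CC)$. Hence the family of homomorphisms defining $\wtLinC(G)$ contains the family defining $\LinC(G)$, so the intersection of kernels can only shrink: $\wtLinC(G)\subseteq\LinC(G)$. Wait---this gives the inclusion in the direction $\wtLinC(G)\subseteq\LinC(G)$ automatically and \emph{without} using connectedness. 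So the genuinely new content, where connectedness must enter, is the opposite inclusion $\LinC(G)\subseteq\wtLinC(G)$: I must show that an element killed by every finite-dimensional representation is also killed by every Banach-algebra-valued holomorphic homomorphism.

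The key idea for the hard inclusion is to \emph{approximate} an arbitrary holomorphic homomorphism $\theta\colon G\to A^\times$ (with $A$ a Banach algebra) by finite-dimensional representations, at least well enough to compare kernels. First I would reduce to the case where $G$ is a connected complex Lie group, so that it is generated by the image of its exponential map and its structure is controlled by the Lie algebra $\fg$. The homomorphism $\theta$ differentiates to a continuous Lie algebra homomorphism $d\theta\colon\fg\to A$, and $\theta(\exp X)=\exp_A(d\theta(X))$; since $G$ is connected, $\ker\theta$ is determined by $d\theta$ together with the kernel of $\exp$. The plan is then to exploit that any holomorphic homomorphism into $A^\times$ factors, up to the relevant data, through finite-dimensional pieces: one decomposes $\theta$ via the spectral/representation theory of the Banach algebra generated by $\theta(G)$, or equivalently one uses that the closed subalgebra of $A$ generated by $\theta(\exp\fg)$ admits sufficiently many finite-dimensional quotients to detect membership in $\ker\theta$.

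Since the statement is advertised as a mere \emph{reformulation} of~\cite[Theorem~2.2]{ArAnF}, the cleanest route is to cite and repackage that result rather than to redo the analytic approximation from scratch. Concretely, I would show that Theorem~2.2 of~\cite{ArAnF} can be read as asserting exactly that, for connected~$G$, the kernel common to all finite-dimensional holomorphic representations coincides with the kernel common to all Banach-algebra-valued holomorphic homomorphisms. The main step is therefore a careful dictionary: I must check that the notion of ``non-linearity'' or the relevant kernel appearing in~\cite[Theorem~2.2]{ArAnF} is literally $\LinC(G)$ on one side and $\wtLinC(G)$ on the other, so that the present theorem is a direct translation. This requires verifying that the Banach-algebra homomorphisms $\theta\colon G\to A^\times$ arising in~\cite{ArAnF} range over \emph{all} Banach algebras~$A$, matching Definition of $\wtLinC$, and that no extra hypotheses (such as commutativity of $A$, or nuclearity) have been silently imposed.

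The hard part will be this matching, because the definition of $\wtLinC(G)$ quantifies over an a priori much larger class (arbitrary Banach algebras, including non-commutative and infinite-dimensional ones) than finite-dimensional representations, and the content of connectedness is precisely what collapses this apparent gap. The essential mechanism---that a holomorphic homomorphism of a \emph{connected} complex Lie group into any Banach algebra cannot ``see'' more of the group than its finite-dimensional representations can---is exactly the phenomenon captured by~\cite[Theorem~2.2]{ArAnF}, and failure of connectedness is what Example~\ref{countex} shows breaks it. I expect the proof to consist of (i)~the trivial inclusion above, and (ii)~an invocation of~\cite[Theorem~2.2]{ArAnF} after confirming the identification of the two linearizers, with the verification in~(ii) being the only place where real care is needed.
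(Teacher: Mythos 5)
Your proposal is correct and takes essentially the same route as the paper: the paper offers no independent argument, stating only that the theorem is a reformulation of \cite[Theorem~2.2]{ArAnF}, which is precisely your step~(ii), while your step~(i), the trivial inclusion $\wtLinC(G)\subset\LinC(G)$ coming from viewing finite-dimensional representations as homomorphisms into $M_n(\CC)$, is exactly what the paper records as obvious right after the theorem. The exploratory second paragraph of your proposal (differentiating $\theta$, spectral theory of the subalgebra generated by $\theta(G)$) is never needed and is not part of the paper's argument, so it can be dropped.
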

It is obvious that $\wtLinC(G)\subset \LinC(G)$ for any complex Lie group. So we  immediately have a useful corollary. (Here and
everywhere we denote by~$G_0$ the component of the identity
in~$G$.)
\begin{co}\label{LwtL}
If $G$~is a complex Lie group, then
 $$\LinC(G_0)=\wtLinC(G_0)\subset \wtLinC(G)\subset \LinC(G)\,.$$
\end{co}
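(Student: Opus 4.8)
The plan is to read the chain as one equality and two inclusions and to verify each in turn. The equality $\LinC(G_0)=\wtLinC(G_0)$ is immediate: since $G_0$, the component of the identity, is connected, I would simply invoke Theorem~\ref{holhomline} with $G$ replaced by $G_0$. No further work is needed for this part, and this is the only place where a nontrivial result enters.

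For the first inclusion $\wtLinC(G_0)\subset\wtLinC(G)$, the idea is restriction of homomorphisms. First I would note that $\wtLinC(G_0)$ is by definition a subgroup of $G_0\subset G$, so its elements genuinely lie in $G$ and it makes sense to evaluate homomorphisms of $G$ on them. Now fix $g\in\wtLinC(G_0)$ and let $\phi\colon G\to A^\times$ be an arbitrary holomorphic homomorphism into the group of invertible elements of a Banach algebra $A$. The restriction $\phi|_{G_0}\colon G_0\to A^\times$ is again a holomorphic homomorphism of this type, hence one of the maps over which the intersection defining $\wtLinC(G_0)$ is taken; therefore $\phi(g)=\phi|_{G_0}(g)=1$. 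Since $\phi$ was arbitrary, $g\in\wtLinC(G)$.

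The second inclusion $\wtLinC(G)\subset\LinC(G)$ is the one already flagged as obvious, and I would record the comparison of test families for completeness. Every finite-dimensional holomorphic representation is a holomorphic homomorphism $G\to\GL_n(\CC)$, and $\GL_n(\CC)$ is precisely the group of invertible elements of the Banach algebra $M_n(\CC)$. Thus the family of maps defining $\LinC(G)$ is a subfamily of the one defining $\wtLinC(G)$; intersecting the kernels over the larger family produces the smaller subgroup, so $\wtLinC(G)\subset\LinC(G)$.

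There is essentially no obstacle here: the only substantive input is Theorem~\ref{holhomline}, and everything else is formal. The one point that requires care is keeping track of the direction of the inclusions when passing between intersections over different classes of homomorphisms, since enlarging the class of test maps enlarges the collection of kernels and hence \emph{shrinks} their intersection.
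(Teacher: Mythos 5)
Your proposal is correct and follows exactly the route the paper intends: the equality is Theorem~\ref{holhomline} applied to the connected group $G_0$, and the two inclusions are the formal comparisons of test families (restriction of homomorphisms from $G$ to $G_0$, and viewing $\GL_n(\CC)$ as $M_n(\CC)^\times$) that the paper dismisses as obvious. You have merely written out the details the paper leaves implicit, including the correct handling of the direction of inclusions when enlarging the class of test maps.
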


It is well known that the linearizer of a connected complex Lie group is contained in the centre. A similar result holds for $\wtLinC(G)$.
\begin{lm}\label{lemmcenter}
If $G$~is a complex Lie group, then $\wtLinC(G)$ is a closed normal subgroup that is contained in the centre of~$G_0$.
\end{lm}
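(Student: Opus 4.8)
The plan is to obtain the three assertions --- subgroup, closedness, normality --- from the formal properties of intersections of kernels, and then to locate $\wtLinC(G)$ inside $Z(G_0)$ by producing two specific holomorphic homomorphisms into Banach-algebra invertibles. For the routine part, note that each holomorphic homomorphism $\phi\colon G\to A^\times$ into the invertibles of a Banach algebra $A$ is continuous, so its kernel $\phi^{-1}(1)$ is a closed normal subgroup of $G$ (the singleton $\{1\}$ being closed in the topological group $A^\times$). Since $\wtLinC(G)$ is by definition the intersection of all such kernels, and an arbitrary intersection of closed normal subgroups is again a closed normal subgroup, the first two claims follow at once.

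It then remains to prove $\wtLinC(G)\subset Z(G_0)$, the centre of the identity component. I would split this into (a)~$\wtLinC(G)\subset G_0$ and (b)~every element of $\wtLinC(G)$ commutes with all of $G_0$; together these give containment in $G_0\cap Z_G(G_0)=Z(G_0)$, where $Z_G(G_0)$ denotes the centralizer of $G_0$ in $G$. For (b) I would use the adjoint representation $\Ad\colon G\to\GL(\fg)$, where $\fg$ is the Lie algebra of $G$ (equivalently of $G_0$). This is a finite-dimensional holomorphic representation, i.e.\ a holomorphic homomorphism into $\GL(\fg)=\M_n(\CC)^\times$, the invertibles of a Banach algebra; hence $\wtLinC(G)\subset\LinC(G)\subset\Ker\Ad$. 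Since $G_0$ is connected, $\Ker\Ad$ consists exactly of the elements of $G$ that centralize $G_0$, because conjugation by such a $g$ acts trivially on $\fg$ and therefore, via $g(\exp X)g^{-1}=\exp(\Ad(g)X)$, trivially on $G_0=\langle\exp\fg\rangle$. Thus $\wtLinC(G)\subset Z_G(G_0)$.

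For (a), which is the more delicate point, I would pass to the discrete component group $\Gamma:=G/G_0$ and take its left regular representation $\lambda\colon\Gamma\to\mathcal B(\ell^2(\Gamma))^\times$ by unitaries. This representation is faithful, and composing it with the quotient map $q\colon G\to\Gamma$ --- which is locally constant, since $G_0$ is open, and hence holomorphic --- yields a holomorphic homomorphism $\lambda\circ q$ into the invertibles of the Banach algebra $\mathcal B(\ell^2(\Gamma))$ whose kernel is precisely $q^{-1}(\{e\})=G_0$. Consequently $\wtLinC(G)\subset G_0$. Combining (a) and (b) gives $\wtLinC(G)\subset G_0\cap Z_G(G_0)=Z(G_0)$.

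I expect the main obstacle to be exactly step (a), and it is the place where the Banach-algebra (as opposed to finite-dimensional) setting is essential: when $\Gamma$ admits no faithful finite-dimensional representation, a finite-dimensional homomorphism cannot separate $G_0$ from the rest of $G$, so one genuinely must invoke an infinite-dimensional representation such as the regular one. This is precisely the feature that distinguishes $\wtLinC$ from $\LinC$, and it is why the weaker invariant $\LinC(G)$ need not be contained in $G_0$.
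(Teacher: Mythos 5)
Your proof is correct and follows essentially the same route as the paper's: the left regular representation of the discrete quotient $G/G_0$ on $\ell^2(G/G_0)$ to get $\wtLinC(G)\subset G_0$, and the adjoint representation (whose kernel is the centralizer of $G_0$ in $G$, cited in the paper as \cite[Lemma 9.2.21]{HiNe} and proved directly by you) to land in the centre of $G_0$. The only difference is that you spell out details the paper treats as evident or cites.
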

\begin{proof}
It is evident that $\wtLinC(G)$ is  closed and normal.

The regular representation of the discrete group $G/G_0$ in the
Hilbert space $\ell^2(G/G_0)$ is faithful and automatically
holomorphic, so $\wtLinC(G)\subset G_0$.  On the other hand, the kernel of the adjoint representation coincides with the centralizer of~$G_0$ in~$G$ \cite[Lemma 9.2.21]{HiNe}. Thus $\wtLinC(G)$ is contained in the centre of $G_0$.
\end{proof}

\begin{rem}
In the general non-connected case it is possible that
$\wtLinC(G)\ne \LinC(G)$ or $\wtLinC(G_0)\ne\wtLinC(G)$. Indeed, using the regular representation as in Lemma~\ref{lemmcenter}, we have
$\wtLinC(\Ga)=\{1\}$ for every discrete group~$\Ga$. However there are finitely
generated discrete groups that are not complex linear, i.e.,
$\LinC(\Ga)\ne\{1\}$. E.g., each discrete group that is complex
linear should be residually finite by Mal'cev's Theorem (see a
short proof in~\cite{Ni13}).

For the group considered in Example~\ref{countex}, we have
$\wtLinC(G_0)\ne\wtLinC(G)$ because $G_0$~is linear but
$\wtLinC(G)\ne\{1\}$.
\end{rem}

We now prove that the conclusion of Theorem~\ref{holhomline} holds for
a wider class of Lie groups.
\begin{thm}\label{Linfcc}
If a  complex Lie group $G$ has finitely many
components, then
$$\LinC(G_0)=\wtLinC(G)=\LinC(G)\,.$$
\end{thm}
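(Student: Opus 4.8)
The plan is to close up the chain of inclusions already supplied by Corollary~\ref{LwtL}, namely
$$\LinC(G_0)=\wtLinC(G_0)\subset\wtLinC(G)\subset\LinC(G),$$
by establishing the single remaining inclusion $\LinC(G)\subset\LinC(G_0)$; this forces all four subgroups to coincide and yields the theorem.

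First I would locate $\LinC(G)$ inside the identity component. Since $G$ has finitely many components, the quotient $G/G_0$ is a finite group, so its regular representation on $\ell^2(G/G_0)$ is finite-dimensional; pulled back along $G\to G/G_0$ it becomes a finite-dimensional holomorphic representation of $G$ whose kernel is exactly $G_0$. Hence $\LinC(G)\subset G_0$. This step genuinely uses the finiteness of the component group and is precisely what fails for infinitely many components (cf.\ the regular-representation argument in Lemma~\ref{lemmcenter}).

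Next, given an arbitrary finite-dimensional holomorphic representation $\rho$ of $G_0$ on a space $V$, I would pass to the induced representation $\pi:=\mathrm{Ind}_{G_0}^G\rho$. Because $G_0$ is open of finite index in $G$, the space of $\pi$ is the finite direct sum $\bigoplus_{i=1}^{r}V$ over a set of coset representatives $g_1=1,\dots,g_r$, so $\pi$ is again finite-dimensional and holomorphic. Normality of $G_0$ gives the Mackey decomposition $\pi|_{G_0}\cong\bigoplus_{i=1}^{r}\rho^{g_i}$, where $\rho^{g}(x)=\rho(g^{-1}xg)$; reading off the summand $i=1$ shows $\Ker\pi\cap G_0\subset\Ker\rho$. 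Consequently any $x\in\LinC(G)$, which lies in $G_0$ by the previous step and in $\Ker\pi$ because $\pi$ is a finite-dimensional holomorphic representation of $G$, must satisfy $x\in\Ker\rho$. As $\rho$ was arbitrary, $x\in\LinC(G_0)$, giving $\LinC(G)\subset\LinC(G_0)$ and hence the asserted equalities.

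The routine verifications---that the induced representation is holomorphic and that its restriction to $G_0$ decomposes as the stated direct sum of conjugates---are standard for an open finite-index subgroup. The only essential input beyond Corollary~\ref{LwtL} is the twofold use of the finiteness of $G/G_0$: once to make the regular representation finite-dimensional and once to keep the induced representation finite-dimensional. I expect the mildly technical point to be confirming holomorphy of $\pi$ across the non-identity components, which reduces to the holomorphy of $\rho$ and of the fixed conjugations $x\mapsto g_i^{-1}xg_i$.
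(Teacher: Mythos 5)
Your proof is correct, and its core idea --- inducing a finite-dimensional holomorphic representation from $G_0$ up to $G$, with the finiteness of $G/G_0$ keeping the induced representation finite-dimensional --- is exactly the paper's. The difference is one of machinery: the paper realizes the induced representation as the $\cA(G)$-module $Y:=\cA(G)\otimes_{\cA(G_0)}X$, obtains finite-dimensionality from the decomposition $\cA(G)\cong\bigoplus_{\ga\in G/G_0}\cA(\ga)$ of right $\cA(G_0)$-modules, and extracts the key implication ($\pi(h)=1\Rightarrow\pi_0(h)=1$ for $h\in G_0$) by applying the map $\ep\otimes 1\colon Y\to X$ built from the counit; you instead use classical finite-index induction with coset representatives and read off the trivial-coset summand of the Mackey decomposition $\pi|_{G_0}\cong\bigoplus_{i}\rho^{g_i}$, which is the same computation in group-theoretic rather than module-theoretic language. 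One point where you are in fact more careful than the paper: the inclusion $\LinC(G)\subset G_0$, which you derive from the finite-dimensional regular representation of the finite group $G/G_0$ pulled back to $G$, is genuinely needed to pass from $\Ker\pi\cap G_0\subset\Ker\rho$ to $\LinC(G)\subset\LinC(G_0)$, and the paper's proof leaves it implicit (its Lemma~\ref{lemmcenter} supplies the analogous containment only for $\wtLinC(G)$, via the infinite-dimensional regular representation, which would not suffice for $\LinC$). So your write-up both matches the paper's strategy and makes explicit a small step the paper glosses over.
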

For a complex manifold $M$ we denote by $\cO(M)$ the locally convex space of holomorphic functions on~$M$ (with the topology of uniform convergence on compact subsets) and by $\cA(M)$ the strong dual space of $\cO(M)$.

\begin{proof}
By Corollary~\ref{LwtL}, it suffices to show that
$\LinC(G)\subset\LinC(G_0)$. We need to prove that for any finite-dimensional
holomorphic representation~$\pi_0$ of~$G_0$ there is a finite-dimensional
holomorphic representation~$\pi$ of~$G$ such that $\pi(h)=1$ implies $\pi_0(h)=1$ when $h\in G_0$.

Fix~$\pi_0$, denote by~$X$ the
representation space of~$\pi_0$, and consider the induced
representation~$\pi$ of~$G$, i.e., the representation assosiated
with the $\cA(G)$-module $Y\!:=\cA(G)\otimes_{\cA(G_0)} X$. Here
$\cA(G)$ is considered as a right $\cA(G_0)$-module with respect to an algebra homomorphism
$\cA(G_0)\to \cA(G)$ induced by the group homomorphism $G_0\to G$.

We can consider each coset $\ga\in G/G_0$ as a complex manifold.
It is not hard to see that
$$
{\cA}(G)\cong \bigoplus_{\ga\in G/G_0}{\cA}(\ga)\,
$$
as right $\cA(G_0)$-modules.
Since $X$ is finite dimensional and $G/G_0$ is finite, the $\cA(G)$-module $Y$ is a finite dimensional linear space. Endowing $Y$ with the topology of the projective tensor product  we make it a Banach $\cA(G)$-module. Hence
$\pi$~is a holomorphic representation of~$G$ on~$Y$.

Since the counit $\ep\!:\cA(G)\to \CC$ is a quotient homomorphism of algebras,  $\CC$ is a right $\cA(G)$-module with respect to the multiplication determined by $\la\cdot \de_g\!:=\la$ (here $\de_g$ is the delta-function at~$g\in G$). Therefore $\ep$ is a right $\cA(G)$-module morphism and so is a right $\cA(G_0)$-module morphism. Hence the linear map $\ep\otimes 1\!:Y\to X\!:\nu\otimes x\mapsto  \ep(\nu)x $ is well defined.

Suppose that $h\in G_0$ and $x\in X$. Then $\de_h\cdot(1\otimes x)=\de_h\otimes x=1\otimes \de_h\cdot x$.
If, in addition,  $\pi(h)=1$, then $\de_h\cdot(1\otimes x)=1\otimes x$. Applying $\ep\otimes 1$ to the equality $1\otimes \de_h\cdot x= 1\otimes x$, we have $\de_h\cdot x=x$.  Since $x$ is arbitrary, we obtain
$\pi_0(h)=1$. Thus, $\LinC(G)\subset\LinC(G_0)$.
\end{proof}

\section{The Arens-Michael envelope of $\cO_{exp}(G)$}
\label{sec:AMenv}

In this section, we consider Hopf $\ptn$-algebras $\cO_{exp}(G)$ and $\cA_{exp}(G)$ for a compactly generated complex Lie group~$G$, establish a relation between holomorphic reflexivity of $\cO_{exp}(G)$ and a property of the Arens-Michael envelope and prove this property in the case of finite number of components.

Recall that a holomorphic function $f$ on a complex Lie group $G$ is said to be of \emph{exponential type} if there is
a locally bounded submultiplicative weight $\om\!: G \to [1,+\infty)$ (i.e.,
$\om(gh)\le  \om(g) \om(h)$   for all $g,h\in G$)
such that $|f(g)|\le \om(g)$  for each
$g\in G $ \cite[Section~4(c)]{Ak08}. The vector space of  holomorphic functions of exponential type on~$G$ is
denoted by $\cO_{exp}(G)$. Being endowed with the corresponding inductive topology  it is a complete locally convex space.

We are interested in the case when $G$~is compactly generated. Under this assumption $f\in \cO_{exp}(G)$
if and only if there are $K,C>0$ such that
\begin{equation}\label{exptwlf}
|f(g)|\le K e^{C\ell(g)}\qquad \text{for each $g\in G$,}
\end{equation}
where $\ell$ is a \emph{word length function}, i.e., $\ell(g)\!: = \min \{ n \!: \, g \in U^{n} \}$ for some relatively compact generating set~$U$; see  \cite[Theorem~5.3]{Ak08} or  \cite[Proposition~2.8]{ArAMN}.

We denote the strong dual space $\cO_{exp}(G)$ by  $\cA_{exp}(G)$. In the initial definition of Akbarov, $\cA_{exp}(G)$ is considered with another topology, namely, the topology of uniform convergence on totally bounded subsets of $\cO_{exp}(G)$. But the latter space is nuclear \cite[Theorem~5.10]{Ak08} and so has the Heine-Borel property and hence these topologies coincide.\footnote{Note also that in \cite{Ak08} $\cA(G)$ and  $\cA_{exp}(G)$ are denoted by $\cO^\star(G)$ and $\cO_{exp}^\star(G)$, respectively.}

Recall that a Hopf $\ptn$-algebra  is  said to be \emph{well-behaved} if it is either a
nuclear Fr\'echet space or a nuclear (DF)-space (see the discussion in Introduction).

\begin{pr}\label{OAHa}\emph{(cf.~\cite[Theorem 5.12(1)]{Ak08})}
If $G$ is a compactly generated complex Lie group, then  $\cO_{exp}(G)$ is a well-behaved Hopf $\ptn$-algebra with respect to the pointwise multiplication and the  comultiplication, counit, and antipode defined in~\eqref{coomHopf}.
\end{pr}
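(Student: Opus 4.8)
The plan is to verify successively that $\cO_{exp}(G)$ is well-behaved, that the structure maps $m,u,\De,\ep,S$ are well defined and continuous, and that the Hopf axioms hold; since the axioms will follow by restriction from $\cO(G)$, the genuine work lies in the analytic estimates and in one tensor-product identification. As $G$ is compactly generated, \eqref{exptwlf} lets me fix a symmetric relatively compact generating set $U=U^{-1}$ with word length $\ell$, so that $\ell(1)=0$, $\ell(g^{-1})=\ell(g)$ and $\ell(gh)\le\ell(g)+\ell(h)$. For $C>0$ let $\cO_C(G)$ be the Banach space of $f\in\cO(G)$ with $\|f\|_C:=\sup_{g\in G}|f(g)|\,e^{-C\ell(g)}<\infty$; then \eqref{exptwlf} identifies $\cO_{exp}(G)$ with the countable inductive limit $\varinjlim_{n\in\N}\cO_n(G)$, an $(\mathrm{LB})$-space and hence a (DF)-space. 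Being also complete and nuclear by \cite[Theorem~5.10]{Ak08}, it is a well-behaved $\ptn$-algebra once the multiplication is shown jointly continuous.

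Next I would read off the structure maps from these weight estimates, using the elementary properties of $\ell$ above. The constant function $1$ has $\|1\|_C=1$, giving the unit $u$. From $|f_1(g)f_2(g)|\,e^{-(C_1+C_2)\ell(g)}\le\|f_1\|_{C_1}\|f_2\|_{C_2}$ the pointwise product restricts to a contractive bilinear map $\cO_{C_1}(G)\times\cO_{C_2}(G)\to\cO_{C_1+C_2}(G)$, and passing to the inductive limit yields a jointly continuous multiplication $m$, equivalently a continuous linear map on $\cO_{exp}(G)\ptn\cO_{exp}(G)$. The estimates $|\ep(f)|=|f(1)|\le\|f\|_C$ and $|Sf(g)|\,e^{-C\ell(g)}=|f(g^{-1})|\,e^{-C\ell(g^{-1})}\le\|f\|_C$ show that the counit $\ep$ is continuous and that $S$ maps each $\cO_C(G)$ isometrically into itself, hence is a continuous endomorphism of $\cO_{exp}(G)$. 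Finally $\ell(gh)\le\ell(g)+\ell(h)$ gives $|\De f(g,h)|\le\|f\|_C\,e^{C(\ell(g)+\ell(h))}$, so $\De f\colon(g,h)\mapsto f(gh)$ is of exponential type on the compactly generated group $G\times G$ (whose word length is equivalent to $(g,h)\mapsto\ell(g)+\ell(h)$) and $\De$ is continuous from $\cO_C(G)$ into the corresponding Banach space on $G\times G$.

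To interpret $\De$ as a coalgebra structure map I would establish the topological isomorphism
\[
\cO_{exp}(G)\ptn\cO_{exp}(G)\;\cong\;\cO_{exp}(G\times G),
\]
which is the main obstacle and the exponential-type analogue of the classical $\cO(G)\ptn\cO(G)\cong\cO(G\times G)$. The canonical bilinear map $(f_1,f_2)\mapsto\bigl((g,h)\mapsto f_1(g)f_2(h)\bigr)$ induces a continuous injection with dense image from the left-hand side into $\cO_{exp}(G\times G)$; the content is that it is an isomorphism onto. Here I would argue by duality, exploiting that $\cO_{exp}(G)$ is well-behaved. The canonical isomorphism $A'\ptn A'\cong(A\ptn A)'$ applied to $A=\cO_{exp}(G)$ gives $(\cO_{exp}(G)\ptn\cO_{exp}(G))'\cong\cA_{exp}(G)\ptn\cA_{exp}(G)$ (recall $\cA_{exp}(G)=\cO_{exp}(G)'$), so by reflexivity of these nuclear spaces the desired isomorphism is equivalent to $\cA_{exp}(G)\ptn\cA_{exp}(G)\cong\cA_{exp}(G\times G)$, a statement about nuclear Fr\'echet spaces for which the completed projective tensor product is far better behaved.

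With this identification in hand, the remaining bialgebra and antipode identities require no new computation. Indeed $\cO_{exp}(G)$ is a subalgebra of the Hopf $\ptn$-algebra $\cO(G)$, and under the identifications $\cO_{exp}(G)\ptn\cO_{exp}(G)\cong\cO_{exp}(G\times G)\subset\cO(G\times G)\cong\cO(G)\ptn\cO(G)$ each of $m,u,\De,\ep,S$ is the restriction (respectively corestriction) of the corresponding map on $\cO(G)$ defined in \eqref{coomHopf}. Consequently every defining diagram for $\cO_{exp}(G)$—associativity, coassociativity, the compatibility of $m$ with $\De$, and the antipode axiom—is the restriction of an already commuting diagram for $\cO(G)$, and therefore commutes. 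This would complete the proof.
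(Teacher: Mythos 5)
Your overall architecture matches the paper's: everything reduces to (a) $\cO_{exp}(G)$ being a complete nuclear (DF)-space, (b) joint continuity of the multiplication, (c) $\De$ landing in $\cO_{exp}(G\times G)$, and (d) the identification $\cO_{exp}(G)\ptn\cO_{exp}(G)\cong\cO_{exp}(G\times G)$, after which the Hopf axioms follow by restriction from $\cO(G)$, exactly as you say. Your weight estimates for $u,\ep,S,\De$ are correct and the one for $\De$ is essentially the paper's own computation (the paper phrases it with a submultiplicative weight $\om$ rather than $e^{C\ell}$; these are interchangeable by \eqref{exptwlf} for compactly generated~$G$).

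However, step (d) --- which you yourself call ``the main obstacle'' --- is not actually proved, and this is where the real content lies. The paper disposes of it by citing \cite[Proposition~5.5(B)]{ArAnF}; you needed to supply an argument, and what you give is only a reformulation. You assert without proof that the canonical map $\cO_{exp}(G)\ptn\cO_{exp}(G)\to\cO_{exp}(G\times G)$ is injective with dense image (neither is automatic: injectivity of a map out of a completed projective tensor product into a function space requires nuclearity plus an argument, and density of the span of the functions $(g,h)\mapsto f_1(g)f_2(h)$ in $\cO_{exp}(G\times G)$ is precisely the kind of exponential-type approximation statement at issue). You then pass, via $A'\ptn A'\cong(A\ptn A)'$ and reflexivity, to the ``equivalent'' statement $\cA_{exp}(G)\ptn\cA_{exp}(G)\cong\cA_{exp}(G\times G)$ and stop, saying Fr\'echet spaces are ``far better behaved.'' The equivalence itself is legitimate (a continuous linear map between complete reflexive spaces is a topological isomorphism iff its transpose is), but the dual statement is the \emph{same} statement, not an easier one: nothing about the Fr\'echet setting makes it self-evident, and one still needs a concrete description of $\cO_{exp}$ or $\cA_{exp}$ (e.g.\ as a weighted inductive or projective limit) together with an identification of tensor products of such limits --- which is exactly what the cited result of \cite{ArAnF} provides. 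A secondary, fixable point: your contractive maps $\cO_{C_1}(G)\times\cO_{C_2}(G)\to\cO_{C_1+C_2}(G)$ give, on the inductive limit, a priori only a separately continuous (hypocontinuous) bilinear map; to conclude joint continuity you should invoke Grothendieck's theorem that separately continuous bilinear maps on (DF)-spaces are jointly continuous (the paper instead cites \cite[Lemma~5.2]{ArAnF}).
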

\begin{proof}
First, note that it follows from  \cite[Theorem~5.10]{Ak08} that $\cA_{exp}(G)$
is a nuclear Fr\'echet space. Then  $\cO_{exp}(G)$  is a nuclear (DF)-space and, by
\cite[Proposition~5.5(B)]{ArAnF}, the natural map
$$\cO(G)\times\cO(G)\to \cO(G\times G)$$ induce a topological isomorphism $$\cO_{exp}(G)\ptn \cO_{exp}(G)\cong \cO_{exp}(G\times G)\,.$$

We claim that under this identification the operations in~\eqref{coomHopf} makes $\cO_{exp}(G)$ a Hopf $\ptn$-algebra.
The only non-trivial assertions are that  $\cO_{exp}(G)$ is a  $\ptn$-algebra and~$\De$ in~\eqref{coomHopf} is a well-defined map $\cO_{exp}(G)\to \cO_{exp}(G\times G)$. The former is proved in \cite[Lemma~5.2]{ArAnF}.
To show the latter take for $f\in \cO_{exp}(G)$ a locally bounded submultiplicative weight~$\om$ such that $|f(g)|\le \om(g)$ for all $g\in G$. Then $\om\times \om\!:(g,h)\mapsto \om(g)\om(h)$ is a locally bounded submultiplicative weight on $G\times G$ such that $$
|\De(f)(g,h)|=|f(gh)|\le \om(gh)\le (\om\times \om)(g,h)
$$
for all $g,h\in G$.  Thus $\De(f)\in \cO_{exp}(G\times G)$.
\end{proof}

Since the Hopf $\ptn$-algebra $\cO_{exp}(G)$  is well-behaved, its strong dual $\cA_{exp}(G)$  is also a well-behaved Hopf $\ptn$-algebra with dual operations (the multiplication corresponds to the comultiplication, the unit to the counit and vice versa, the antipode  to the antipode) \cite[Proposition~1.3]{BFGP}. Restricted to the group Hopf algebra $\CC G$, which is dense in  $\cA_{exp}(G)$, the operations  has the form given in~\eqref{cocmHopf} (we identify elements of~$G$ with delta-functions).

The following result from \cite{Ak08} brings out the importance of $\cA_{exp}(G)$ and $\cO_{exp}(G)$  in our considerations.
\begin{thm}\label{AMEAG} \cite[Theorem 6.2]{Ak08}
If $G$ is a compactly generated complex Lie group, then
the natural homomorphism $\cA(G)\to\cA_{exp}(G)$ is an Arens-Michael envelope.
\end{thm}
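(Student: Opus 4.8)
The plan is to show that the natural homomorphism $\iota\colon\cA(G)\to\cA_{exp}(G)$ (dual to the inclusion $\cO_{exp}(G)\hookrightarrow\cO(G)$) satisfies the universal property of the Arens-Michael envelope. Concretely, I must verify two things: first, that $\cA_{exp}(G)$ is itself an Arens-Michael algebra (i.e., its topology is defined by submultiplicative prenorms and it is complete); and second, that every continuous homomorphism $\psi\colon\cA(G)\to B$ into an Arens-Michael algebra $B$ factors uniquely and continuously through $\iota$. Since $\CC G$ (delta-functions) is dense in both $\cA(G)$ and $\cA_{exp}(G)$, uniqueness of the factorization will be automatic, so the real content is existence of the factorization together with its continuity.

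The first step is to identify the submultiplicative prenorms on $\cA(G)$. Since $\cA_{exp}(G)=\cO_{exp}(G)'$ is a nuclear Fr\'echet space (by \cite[Theorem~5.10]{Ak08}), its topology is generated by a countable family of prenorms, and I would arrange these to be submultiplicative with respect to the convolution product. The natural candidates come from the weights $\om$ in the definition of exponential type: for a locally bounded submultiplicative weight $\om\colon G\to[1,+\infty)$, the assignment $\de_g\mapsto\|\de_g\|_\om\le\om(g)$ extends to a submultiplicative prenorm on $\CC G$ precisely because $\om(gh)\le\om(g)\om(h)$, and these prenorms cut out exactly the exponential-type condition \eqref{exptwlf} on the predual. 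The key technical claim is that the topology of $\cA_{exp}(G)$ is \emph{generated} by submultiplicative prenorms of this weighted type, so that $\cA_{exp}(G)$ is Arens-Michael.

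The second step is the factorization. Given a continuous homomorphism $\psi\colon\cA(G)\to B$ into an Arens-Michael algebra, I would pull back a continuous submultiplicative prenorm on $B$ to get a continuous submultiplicative prenorm on $\cA(G)$; continuity of $\psi$ bounds $\|\psi(\de_g)\|_B$ by some such prenorm. The crucial point is that this bound is controlled by a weight of exponential type: because $G$ is compactly generated, boundedness of $g\mapsto\|\psi(\de_g)\|_B$ on a relatively compact generating set $U$, combined with submultiplicativity, yields an estimate $\|\psi(\de_g)\|_B\le Ke^{C\ell(g)}$ via the word-length function $\ell$. This is exactly the growth condition characterizing $\cO_{exp}(G)$, so the prenorm $\|\cdot\|_\psi\!:=\|\psi(\cdot)\|_B$ is continuous on $\cA_{exp}(G)$, and $\psi$ extends continuously across $\iota$.

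The hard part is the passage from abstract continuity of $\psi$ to the explicit exponential growth estimate, i.e., showing that continuity on $\cA(G)$ together with the compact-generation hypothesis forces exponential-type control. This is where compact generation is indispensable: it converts the a priori qualitative continuity into the quantitative bound \eqref{exptwlf} through the word-length function, which is precisely the mechanism that matches the dual topology of $\cA_{exp}(G)$. The proof is due to Akbarov, and I would cite \cite[Theorem~6.2]{Ak08} for the full argument; the sketch above is the route I would reconstruct.
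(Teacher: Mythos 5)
The paper offers no proof of this theorem: it is imported verbatim from Akbarov \cite[Theorem~6.2]{Ak08}, and since your proposal also ultimately defers to that same citation, you are taking essentially the same approach as the paper. Your sketch of how the argument runs (submultiplicative weights corresponding to submultiplicative prenorms, with compact generation converting continuity into the word-length bound~\eqref{exptwlf}) is a faithful reconstruction of the mechanism in \cite{Ak08}, modulo the omitted duality step --- passing from the estimate on delta-functions to continuity of the pulled-back prenorm on all of $\cA_{exp}(G)$ requires expressing that prenorm, via Hahn--Banach, as a supremum over coefficient functions lying in a weighted ball of $\cO_{exp}(G)$.
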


Summarizing the results above we have the following assertion.

\begin{pr}\label{OAHb}\emph{(cf.~\cite[Theorem 5.12(2)]{Ak08})}
If $G$ is a compactly generated  complex Lie group,
then  $\cA_{exp}(G)$  is a nuclear Fr\'echet (hence well-behaved)  Arens-Michael Hopf
algebra that  is dual to $\cO_{exp}(G)$ in the sense of Bonneau, Flato, Gerstenhaber and Pinczon. The multiplication is just the convolution whereas the comultiplication, counit and antipode are determined by~\eqref{cocmHopf}.
\end{pr}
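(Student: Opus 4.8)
The statement simply collects what has been established above, so the plan is to assemble those results and then make the dual operations explicit. First I would recall that $\cA_{exp}(G)$ is a nuclear Fr\'echet space: this was already noted in the proof of Proposition~\ref{OAHa} as a consequence of \cite[Theorem~5.10]{Ak08}. Since a nuclear Fr\'echet space is well-behaved by definition, the topological half of the claim is immediate.

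For the Hopf structure I would invoke \cite[Proposition~1.3]{BFGP}. By Proposition~\ref{OAHa}, $\cO_{exp}(G)$ is a well-behaved Hopf $\ptn$-algebra, and the cited result asserts that the strong dual of a well-behaved Hopf $\ptn$-algebra is again a well-behaved Hopf $\ptn$-algebra, with multiplication dual to the comultiplication, unit dual to the counit, and antipode dual to the antipode. Thus $\cA_{exp}(G)$ carries a well-behaved Hopf $\ptn$-algebra structure that is, by construction, dual to $\cO_{exp}(G)$ in the sense of Bonneau, Flato, Gerstenhaber and Pinczon. That this algebra is moreover Arens-Michael follows from Theorem~\ref{AMEAG}: the natural homomorphism $\cA(G)\to\cA_{exp}(G)$ is an Arens-Michael envelope, and the target of an Arens-Michael envelope is always an Arens-Michael algebra.

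It remains to identify the operations concretely, and here the only work is a routine dualization. The multiplication on $\cA_{exp}(G)$ is dual to the pointwise multiplication on $\cO_{exp}(G)$; pairing against $\de_g,\de_h$ via the formula $\De(f)(g,h)=f(gh)$ from \eqref{coomHopf} gives $\de_g\,\de_h=\de_{gh}$, i.e., the convolution. Dually, the comultiplication, counit, and antipode on $\cA_{exp}(G)$ are dual to $m$, $u$, and $S$ on $\cO_{exp}(G)$; evaluating on delta-functions reproduces $\De(\de_g)=\de_g\otimes\de_g$, $\ep(\de_g)=1$, and $S(\de_g)=\de_{g^{-1}}$, which are precisely the formulas \eqref{cocmHopf}. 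Since $\CC G$ is dense in $\cA_{exp}(G)$ and the structure maps are continuous, these values on delta-functions determine the operations everywhere, using the topological isomorphism $\cO_{exp}(G)\ptn\cO_{exp}(G)\cong\cO_{exp}(G\times G)$ of Proposition~\ref{OAHa} to interpret the coproduct as a map into the completed tensor square. Because the proposition is essentially a summary of the preceding results, there is no genuine obstacle; the only point demanding attention is this last continuity-and-density verification that the dense-subalgebra formulas extend to all of $\cA_{exp}(G)$.
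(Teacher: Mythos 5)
Your proposal is correct and follows exactly the paper's route: the paper gives no separate proof of this proposition, stating only ``Summarizing the results above we have the following assertion,'' where the ingredients are precisely the ones you assemble --- nuclearity and the Fr\'echet property of $\cA_{exp}(G)$ from \cite[Theorem~5.10]{Ak08} (noted in the proof of Proposition~\ref{OAHa}), the dual Hopf structure with the formulas~\eqref{cocmHopf} on the dense subalgebra $\CC G$ via \cite[Proposition~1.3]{BFGP}, and the Arens-Michael property from Theorem~\ref{AMEAG}.
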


Any holomorphic homomorphism
$\phi\!:G\to H$  of compactly generated complex Lie group induces the Hopf $\ptn$-algebra
homomorphism $\wt\phi\!:\cO_{exp}(H)\to \cO_{exp}(G)$
given by $[\wt\phi(f)](g)\!:=f(\phi(g))$.  Moreover, the strong dual map $\wt\phi'\!:\cA_{exp}(G)\to \cA_{exp}(H)$ is also a Hopf
$\ptn$-algebra homomorphism.

The following theorem motivates us to consider $\wtLinC(G)$ instead
of  $\LinC(G)$ in the case when $G$ is not connected.

\begin{thm}\label{redliexpf}
Let $G$ be a compactly generated complex Lie group and let
$\pi_0\!:G\to G/\wtLinC(G)$ be the quotient homomorphism. Then

{\em (A)}  $\wt\pi_0'\!:\cA_{exp}(G)\to \cA_{exp}(G/\wtLinC(G))  $
is a Hopf $\ptn$-algebra isomorphism.

{\em (B)}  $\wt\pi_0\!:\cO_{exp}(G/\wtLinC(G))\to \cO_{exp}(G) $ is
a Hopf $\ptn$-algebra isomorphism.

{\em (C)}   Each function in  $\cO_{exp}(G)$ is constant on
cosets of $\wtLinC(G)$.
\end{thm}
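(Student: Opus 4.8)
The plan is to treat (A) as the essential statement and to derive (B) and (C) from it by formal duality. Write $\ol G:=G/\wtLinC(G)$; by Lemma~\ref{lemmcenter} the subgroup $\wtLinC(G)$ is closed and normal, so $\ol G$ is again a compactly generated complex Lie group and the Hopf $\ptn$-algebras $\cO_{exp}(\ol G)$, $\cA_{exp}(\ol G)$ are defined by Propositions~\ref{OAHa} and~\ref{OAHb}. The single input that drives everything is the defining property of $\wtLinC(G)$: every holomorphic homomorphism $\theta\colon G\to B^\times$ into the unit group of a Banach algebra $B$ kills $\wtLinC(G)$, hence factors as $\theta=\ol\theta\circ\pi_0$ with $\ol\theta\colon\ol G\to B^\times$, and the factor $\ol\theta$ is holomorphic because $\pi_0$ is a holomorphic quotient map.

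For (A) I would first recall, via Theorem~\ref{AMEAG}, that $\cA_{exp}(G)$ is the Arens-Michael envelope of $\cA(G)$, i.e.\ the completion of $\cA(G)$ in the family of all continuous submultiplicative prenorms. Using the correspondence between holomorphic Banach representations of $G$ and continuous representations of $\cA(G)$ (cf.\ \cite[(5.1)--(5.3)]{ArAnF}), each such prenorm has the form $p(\mu)=\|\widehat\theta(\mu)\|$, where $\widehat\theta\colon\cA(G)\to B$ is the integrated homomorphism of some $\theta\colon G\to B^\times$ (with $\widehat\theta(\de_g)=\theta(g)$), and every $\theta$ yields such a $p$. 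Factoring $\theta=\ol\theta\circ\pi_0$ gives $\widehat\theta=\widehat{\ol\theta}\circ\pi_{0*}$, where $\pi_{0*}\colon\cA(G)\to\cA(\ol G)$ is induced by $\pi_0$, so that $p=\ol p\circ\pi_{0*}$ with $\ol p$ the prenorm of $\ol\theta$ on $\cA(\ol G)$. Since the construction reverses, the families of defining prenorms on $\cA(G)$ and $\cA(\ol G)$ correspond bijectively through $\pi_{0*}$.

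I would then pass to completions. Because every defining prenorm $p$ on $\cA(G)$ equals $\ol p\circ\pi_{0*}$ and the image of $\pi_{0*}$ contains the dense subalgebra $\CC\ol G$, the Banach completion of $(\cA(G),p)$ is canonically isomorphic to that of $(\cA(\ol G),\ol p)$; taking the projective limit over all prenorms identifies $\cA_{exp}(G)$ with $\cA_{exp}(\ol G)$, and this identification is exactly the continuous extension $\wt\pi_0'$ of $\pi_{0*}$, which is therefore the claimed topological Hopf $\ptn$-algebra isomorphism of (A). Part (B) is then formal: $\cO_{exp}$ and $\cA_{exp}$ are mutually strong-dual well-behaved spaces (Propositions~\ref{OAHa} and~\ref{OAHb}), and $\wt\pi_0'$ is by definition the strong dual of $\wt\pi_0$; applying the contravariant, isomorphism-preserving strong-dual functor to the isomorphism $\wt\pi_0'$ and invoking reflexivity $\cO_{exp}(\cdot)''\cong\cO_{exp}(\cdot)$ shows that $\wt\pi_0$ itself is a topological isomorphism. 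Finally, (C) is immediate: by surjectivity of $\wt\pi_0$ in (B) every $f\in\cO_{exp}(G)$ equals $\wt\pi_0(\ol f)=\ol f\circ\pi_0$ for some $\ol f$, hence is constant on the cosets of $\ker\pi_0=\wtLinC(G)$. Conceptually this holds because $\de_h$ maps to the unit $1$ in $\cA_{exp}(G)$ for $h\in\wtLinC(G)$, so that $\de_{gh}=\de_g$ there and $f(gh)=\langle f,\de_{gh}\rangle=\langle f,\de_g\rangle=f(g)$.

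I expect the main obstacle to be the completion step in (A): upgrading the bijection of prenorm families to a genuine topological isomorphism of the envelopes, rather than merely a continuous homomorphism with dense range. One must verify carefully that $\pi_{0*}$ identifies each individual Banach quotient on the two sides and that the induced map of projective limits is open onto its image; the auxiliary facts that every continuous submultiplicative prenorm genuinely arises from a Banach representation and that all such representations descend to $\ol G$ are the technical pillars supporting this.
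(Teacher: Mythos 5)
Your proof is correct and follows essentially the same route as the paper: the paper reduces the theorem to showing that $\wt\pi_0$ and $\wt\pi_0'$ are topological isomorphisms and then, using Theorem~\ref{AMEAG} to identify $\cA_{exp}(G)$ with $\wh\cA(G)$, invokes the prenorm/Banach-representation factorization argument of \cite[Theorem~5.3]{ArAnF} (with $\wtLinC(G)$ in place of $\LinC(G)$, whose very definition makes the factorization automatic), which is exactly the argument you reconstruct in detail. Your derivation of (B) and (C) from (A) by strong duality likewise matches the cited argument.
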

\begin{proof}
Since $\wt\pi_0'$ and $\wt\pi_0$ are Hopf $\ptn$-algebra homomorphisms, it is sufficient to prove that they are topologically isomorphisms. But, by Theorem~\ref{AMEAG}, $\cA_{exp}(G)\cong \wh\cA(G)$, so we can apply exactly the same argument as in the proof of \cite[Theorem~5.3]{ArAnF} for $\LinC(G)$ in the connected case.
\end{proof}

\begin{co}\label{coredliexpf}
Let $H$ be a subgroup of  a compactly generated complex Lie group~$G$. Then $H\subset \wtLinC(G)$ if and only if  each function in  $\cO_{exp}(G)$ is constant on~$H$.
\end{co}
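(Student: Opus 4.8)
The statement packages two directions, and I would treat them separately, with the substance concentrated in the ``if'' part.

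For the ``only if'' direction, suppose $H\subset\wtLinC(G)$. By Theorem~\ref{redliexpf}(C), every function in $\cO_{exp}(G)$ is constant on cosets of $\wtLinC(G)$. Since $H$ is a subgroup contained in $\wtLinC(G)$, it lies entirely inside the identity coset $\wtLinC(G)$, so any $f\in\cO_{exp}(G)$ takes a single value on $H$. This direction therefore reduces immediately to the already-established part~(C).

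For the ``if'' direction I would argue by contraposition: assuming $H\not\subset\wtLinC(G)$, I would produce a function in $\cO_{exp}(G)$ that is not constant on $H$. Pick $h\in H\setminus\wtLinC(G)$. By the definition of $\wtLinC(G)$ as the intersection of the kernels of holomorphic homomorphisms into groups of invertible elements of Banach algebras, there is a unital Banach algebra $A$ and a holomorphic homomorphism $\phi\colon G\to A^{\times}$ with $\phi(h)\neq 1$. Since $\phi(h)-1\neq 0$, the Hahn--Banach theorem supplies a continuous linear functional $\la\in A'$ with $\la(\phi(h))\neq\la(1)$. The plan is then to show that the matrix coefficient $f(g):=\la(\phi(g))$ belongs to $\cO_{exp}(G)$; granting this, $f(h)\neq f(1)=\la(1)$ while $1,h\in H$, so $f$ is not constant on $H$, as required.

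The key step---and the only nonformal point---is the verification that $f(g)=\la(\phi(g))$ is of exponential type. Holomorphy is clear, as $f$ is the composite of the holomorphic map $\phi$ with the continuous (hence holomorphic) linear functional $\la$. For the growth estimate I would set $\om(g):=\max\{1,\|\phi(g)\|\}$ and check that $\om$ is a locally bounded submultiplicative weight with values in $[1,+\infty)$: local boundedness follows from continuity of $g\mapsto\|\phi(g)\|$, while submultiplicativity follows from $\|\phi(gh)\|\le\|\phi(g)\|\,\|\phi(h)\|$ together with the elementary inequality $\max\{1,ab\}\le\max\{1,a\}\max\{1,b\}$ for $a,b\ge 0$. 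Then $|f(g)|\le\|\la\|\,\|\phi(g)\|\le\max\{1,\|\la\|\}\,\om(g)$, and absorbing the constant $\max\{1,\|\la\|\}$ into a rescaled (still submultiplicative, still $[1,+\infty)$-valued) weight yields $|f(g)|\le\om'(g)$. I expect this weight construction to be the main---albeit routine---obstacle; once it is in place, the corollary follows by combining Theorem~\ref{redliexpf}(C) for one implication with Hahn--Banach and the matrix-coefficient function for the other.
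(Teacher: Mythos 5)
Your proof is correct, and its first half (the ``only if'' direction) coincides with the paper's: both reduce it to Theorem~\ref{redliexpf}(C). In the ``if'' direction you take a more self-contained route than the paper does. The paper's proof invokes \cite[Proposition~5.1]{ArAnF} in the direction ``every function in $\cO_{exp}(G)$ is a coefficient of a holomorphic Banach-space representation'' and then asserts that, such functions being constant on~$H$, the corresponding representation maps $H$ to the identity. Read literally, that compresses two points: what is actually needed is the converse direction of the correspondence (coefficients of an \emph{arbitrary} holomorphic homomorphism into a Banach algebra are of exponential type), and one must then separate with \emph{all} functionals, since constancy of a single coefficient does not by itself trivialize a representation on~$H$. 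Your contrapositive argument supplies exactly these details: the explicit weight $\om(g)=\max\{1,\|\phi(g)\|\}$ (locally bounded, submultiplicative, $[1,+\infty)$-valued, with the harmless rescaling to absorb $\|\la\|$) shows that the coefficient $\la\circ\phi$ lies in $\cO_{exp}(G)$, and Hahn--Banach produces the functional detecting $\phi(h)\ne 1$. So both proofs hinge on the same correspondence between exponential-type functions and matrix coefficients of holomorphic Banach representations, but yours uses only the elementary estimate direction of it and makes the separation step explicit, at the cost of some routine weight bookkeeping, whereas the paper's is shorter because it delegates the key fact to \cite[Proposition~5.1]{ArAnF}; your write-up can fairly be viewed as the rigorous expansion of the paper's two-sentence sketch.
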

\begin{proof}
The necessity follows from Part~(C) of Theorem~\ref{redliexpf}. To prove the sufficiency
note that each function in
$\cO_{exp}(G)$ is a coefficient of a holomorphic representation of~$G$
in some Banach space (see, e.g.,
\cite[Proposition~5.1]{ArAnF}). Since such functions are constant on~$H$, the corresponding representation maps each element of~$H$ to the identity operator.
\end{proof}

By using the following result we can reduce our question on the
holomorphic reflexivity to a question on the Arens-Michael
envelope.

\begin{pr} \label{AMeqHR}
Let  $G$ be a compactly generated complex Lie group.
Then $\cO(G)$ is holomorphically reflexive if and only if the embedding $\cO_{exp}(G)\to \cO(G)$ is an Arens-Michael
envelope.
\end{pr}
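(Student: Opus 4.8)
The plan is to unwind Definitions~\ref{Homduldef} and~\ref{Homrelfdef} for $H=\cO(G)$ and to recognise the comparison homomorphism $\ka_H$ as the canonical map from the abstract Arens--Michael envelope of $\cO_{exp}(G)$ to $\cO(G)$ induced by the natural embedding. First I would record that $\cO(G)$ is indeed an NFAM Hopf algebra: it is a nuclear Fr\'echet space, carries the Hopf structure~\eqref{coomHopf}, and is Arens--Michael since the compact-open seminorms $\sup_K|\cdot|$ are submultiplicative. Since $\cO(G)'=\cA(G)$, Definition~\ref{Homduldef} gives $\cO(G)^\bullet=\wh{\cA(G)}$, and Theorem~\ref{AMEAG} identifies this with $\cA_{exp}(G)$. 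By Proposition~\ref{OAHb} the latter is nuclear Fr\'echet, so the Fr\'echet hypothesis of Definition~\ref{Homrelfdef} is automatically met and $\ka_H$ is well defined.

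Next I would compute the bidual. Because $\cA_{exp}(G)$ is well-behaved, its strong dual returns $\cO_{exp}(G)$, so $(\cO(G)^\bullet)'\cong\cO_{exp}(G)$ and therefore $\cO(G)^{\bullet\bullet}=\wh{\cO_{exp}(G)}$. Under these identifications the slanted arrow $(\cO(G)^\bullet)'\to\cO(G)''\cong\cO(G)$ of diagram~\eqref{defioH} is the strong dual of the Arens--Michael envelope homomorphism $\cA(G)\to\cA_{exp}(G)$ from Theorem~\ref{AMEAG}. Since that homomorphism is precisely the restriction of analytic functionals, that is, the strong dual of the inclusion $\cO_{exp}(G)\hookrightarrow\cO(G)$, and since both $\cO(G)$ and $\cO_{exp}(G)$ are reflexive, applying the strong dual functor once more returns the natural embedding $\cO_{exp}(G)\to\cO(G)$. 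Consequently $\ka_H\colon\wh{\cO_{exp}(G)}\to\cO(G)$ is the unique Arens--Michael homomorphism whose composition with the canonical map $\cO_{exp}(G)\to\wh{\cO_{exp}(G)}$ equals this embedding.

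It then remains to invoke the universal property. As $\cO(G)$ is an Arens--Michael algebra, the embedding $\cO_{exp}(G)\to\cO(G)$ is an Arens--Michael envelope if and only if the comparison homomorphism $\wh{\cO_{exp}(G)}\to\cO(G)$ furnished by the universal property is a topological isomorphism; and by the previous paragraph this comparison map is exactly $\ka_H$. Hence $\cO(G)$ is holomorphically reflexive, i.e.\ $\ka_H$ is a topological isomorphism, if and only if $\cO_{exp}(G)\to\cO(G)$ is an Arens--Michael envelope. I expect the only delicate point to be the bookkeeping in the second paragraph: carefully matching the strong-dual functor applied to $\cA(G)\to\cA_{exp}(G)$ with the bare inclusion $\cO_{exp}(G)\hookrightarrow\cO(G)$ under the reflexivity isomorphisms $\cO(G)''\cong\cO(G)$ and $\cO_{exp}(G)''\cong\cO_{exp}(G)$, so that the two triangles of~\eqref{defioH} genuinely commute and $\ka_H$ really is the universal comparison map rather than merely some Arens--Michael homomorphism lying over the embedding.
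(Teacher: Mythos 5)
Your proof is correct and follows essentially the same route as the paper's: identify $\cO(G)^\bullet\cong\wh\cA(G)\cong\cA_{exp}(G)$ via Theorem~\ref{AMEAG}, use reflexivity of the nuclear Fr\'echet space $\cA_{exp}(G)$ to get $\cO(G)^{\bullet\bullet}\cong\wh\cO_{exp}(G)$, and conclude by the universal property of the Arens--Michael envelope. Your second and third paragraphs merely spell out the bookkeeping (matching $\ka_H$ with the universal comparison map over the embedding $\cO_{exp}(G)\to\cO(G)$) that the paper's terser proof leaves implicit.
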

\begin{proof}
By the definition of the holomorphic duality functor, $\cO(G)^{\bullet}\cong \wh\cA(G)$, which in turn is topologically isomorphic to $\cA_{exp}(G)$ (Theorem~\ref{AMEAG}).

Note that $\cA_{exp}(G)$ is a
nuclear Fr\'echet space \cite[Theorems~5.10]{Ak08}. So it is reflexive as a locally convex space \cite[Theorem~3.7.12]{BS12}, therefore  $\cA_{exp}(G)'=\cO_{exp}(G)''\cong\cO_{exp}(G)$. Thus
$\cO(G)^{\bullet\bullet}\cong \wh\cO_{exp}(G)$ and then $\cO_{exp}(G)\to \cO(G)$ is an Arens-Michael
envelope if and only if the canonical map $\cO(G)^{\bullet\bullet}\to\cO(G)$ is a topological isomorphism.
\end{proof}

Now we discuss the Arens-Michael
envelope of $\cO_{exp}(G)$.

\begin{pr}\label{expdis}
If $\Ga$ is a  finitely generated discrete group, then  the
natural embedding $\cO_{exp}(\Ga)\to \cO(\Ga)$ is  an
Arens-Michael envelope.
\end{pr}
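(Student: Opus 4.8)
The plan is to show directly that the Arens-Michael topology on $\cO_{exp}(\Ga)$ agrees with the topology it inherits from $\cO(\Ga)$, and then to read off the envelope from a completion argument. First I would identify $\cO(\Ga)$ with the product algebra $\CC^\Ga$ under pointwise multiplication, carrying the topology of pointwise convergence. For each finite subset $F\subset\Ga$ the seminorm $p_F(f):=\max_{g\in F}|f(g)|$ is submultiplicative, and these seminorms generate the topology; hence $\cO(\Ga)$ is already an Arens-Michael algebra. Since every finitely supported function lies in $\cO_{exp}(\Ga)$ and such functions are dense in $\CC^\Ga$, the embedding $\cO_{exp}(\Ga)\to\cO(\Ga)$ has dense range. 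It therefore suffices to prove that the topology $\tau_m$ generated on $\cO_{exp}(\Ga)$ by all continuous submultiplicative seminorms coincides with the subspace topology $\tau_p$ inherited from $\cO(\Ga)$: once this is known, the completion of $(\cO_{exp}(\Ga),\tau_m)$ is the closure of $\cO_{exp}(\Ga)$ in the complete space $\CC^\Ga$, namely $\cO(\Ga)$ itself, which is exactly the assertion that the embedding is an Arens-Michael envelope.

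The inclusion $\tau_p\le\tau_m$ is immediate, since each $p_F$ restricts to a continuous submultiplicative seminorm on $\cO_{exp}(\Ga)$. The substance of the proof is the reverse inclusion: every continuous submultiplicative seminorm $p$ on $\cO_{exp}(\Ga)$ must be dominated by some multiple of a single $p_F$. Here I would exploit the orthogonal idempotents $e_g:=\mathbf{1}_{\{g\}}$, for which $p(e_g)=p(e_g^2)\le p(e_g)^2$ forces $p(e_g)\in\{0\}\cup[1,\infty)$. Writing $\cO_{exp}(\Ga)$ as the locally convex inductive limit of the weighted Banach spaces $B_C:=\{f:\|f\|_C:=\sup_g|f(g)|e^{-C\ell(g)}<\infty\}$ (the presentation supplied by~\eqref{exptwlf}), continuity of $p$ yields, for each $C$, a constant $M_C$ with $p(f)\le M_C\|f\|_C$ on $B_C$. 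Applying this to $e_g$ gives $p(e_g)\le M_C e^{-C\ell(g)}$. The main obstacle, and the crux of the argument, is to control $p$ on the non-finitely-supported part of $\cO_{exp}(\Ga)$; the first half of the solution is that the set $F:=\{g:p(e_g)\ne0\}$ is finite, for otherwise it would contain elements $g_k$ with $\ell(g_k)\to\infty$ (length balls being finite), whence $1\le p(e_{g_k})\le M_C e^{-C\ell(g_k)}$ would force $e^{C\ell(g_k)}\le M_C$ for all $k$, contradicting $M_C<\infty$.

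With $F$ finite I would finish by splitting $f=f\mathbf{1}_F+f\mathbf{1}_{\Ga\setminus F}$. For the tail, note that on the cofinite set $R:=\Ga\setminus F$ the finite truncations $\mathbf{1}_{S_n}$, $S_n:=\{g\in R:\ell(g)\le n\}$, converge to $\mathbf{1}_R$ already in a single space $B_C$, since $\|\mathbf{1}_R-\mathbf{1}_{S_n}\|_C\le e^{-C(n+1)}\to0$; as $p(\mathbf{1}_{S_n})\le\sum_{g\in S_n}p(e_g)=0$, continuity gives $p(\mathbf{1}_R)=0$. Submultiplicativity then kills the whole tail, $p(f\mathbf{1}_R)=p((f\mathbf{1}_R)\mathbf{1}_R)\le p(f\mathbf{1}_R)\,p(\mathbf{1}_R)=0$, while the finite part satisfies $p(f\mathbf{1}_F)\le\big(\sum_{g\in F}p(e_g)\big)\max_{g\in F}|f(g)|$. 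Hence $p\le C'p_F$ with $C':=\sum_{g\in F}p(e_g)$, establishing $\tau_m\le\tau_p$ and completing the proof. I expect the finiteness of $F$ and the single-step convergence of the truncations---both resting on the interplay between word-length growth and the inductive-limit structure of $\cO_{exp}(\Ga)$---to be the only genuinely delicate points; everything else is formal.
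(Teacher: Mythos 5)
Your proof is correct, and its skeleton is the same as the paper's: both arguments reduce the statement to showing that every continuous submultiplicative seminorm $p$ on $\cO_{exp}(\Ga)$ is dominated by a constant times $f\mapsto\max_{\ga\in F}|f(\ga)|$ for some finite $F\subset\Ga$, both exploit the characteristic functions $\chi_\ga$ as idempotents, and both hinge on the finiteness of $F=\{\ga:\,p(\chi_\ga)\neq 0\}$. The difference is in how the two key steps are discharged. The paper is a sketch that outsources them to Akbarov: finiteness of $F$ is \cite[Lemma~6.3]{Ak08}, and the tail of $f$ is controlled by the fact that $\sum_{\ga}f(\ga)\chi_\ga$ converges to $f$ unconditionally in the inductive topology of $\cO_{exp}(\Ga)$ \cite[Lemma~6.1]{Ak08}, after which $p(f)\le\sum_{\ga\in F}|f(\ga)|\,p(\chi_\ga)$ follows. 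You prove finiteness directly (the idempotent dichotomy $p(\chi_g)\in\{0\}\cup[1,\infty)$ against the decay bound $p(\chi_g)\le M_C e^{-C\ell(g)}$ coming from the inductive-limit presentation), which is presumably close to Akbarov's own argument; more notably, you avoid the series-convergence lemma altogether via the decomposition $f=f\mathbf{1}_F+f\mathbf{1}_R$ and the absorption trick $p(f\mathbf{1}_R)\le p(f\mathbf{1}_R)\,p(\mathbf{1}_R)=0$, where $p(\mathbf{1}_R)=0$ requires only the trivial norm convergence $\mathbf{1}_{S_n}\to\mathbf{1}_R$ inside a single weighted Banach space $B_C$, rather than convergence of the expansion of an arbitrary exponential-type function in the inductive topology. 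What your route buys is self-containedness and a more elementary tail estimate; what the paper's buys is brevity. Two points you should make explicit to be airtight: first, that the family $\{B_C\}$ is cofinal among the weighted spaces $\{f:\,\sup_g|f(g)|/\om(g)<\infty\}$ defining the inductive topology (the paper's \eqref{exptwlf} gives equality of the underlying sets, and cofinality follows since every locally bounded submultiplicative weight $\om$ satisfies $\om\lesssim e^{C\ell}$), so that continuity of $p$ really yields the constants $M_C$; second, the closing identification of the Arens--Michael envelope with the closure in $\CC^\Ga$ once $\tau_m=\tau_p$, which you state and which is standard.
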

 \begin{proof}
In fact, the result is proved in \cite{Ak08}; we give a sketch. Since
$\cO(\Ga)$ is the algebra of all functions with the topology of
uniform convergence on finite subsets, it suffices to show that
for each continuous submultiplicative  prenorm $\|\cdot\|$ on
$\cO_{exp}(\Ga)$ there are a finite subset~$S$ in~$\Ga$ and $C>0$
such that $\|f\|\le C  \max_{\ga\in S} |f(\ga)|$.

For  $\ga\in\Ga$ denote by~$\chi_\ga$ the characteristic function
of $\{\ga\}$. Since $\Ga$~is discrete,   $\chi_\ga\in
\cO_{exp}(\Ga)$. By \cite[Lemma~6.3]{Ak08}, the set
$S\!:=\{\ga\in\Ga:\,\|\chi_\ga\|>0\}$ is finite. For any
$f\in\cO_{exp}(\Ga)$ the series $\sum_{\ga\in \Ga}
f(\ga)\,\chi_\ga$ converges to $f$ in   the topology of
$\cO_{exp}(\Ga)$ \cite[Lemma~6.1]{Ak08}. So we have
$$
\|f\|\le \sum_{\ga\in \Ga} |f(\ga)|\,\|\chi_\ga\|=\sum_{\ga\in S}
|f(\ga)|\,\|\chi_\ga\|\le C\,  \max_{\ga\in S} |f(\ga)|,
$$
where $C\!:=\sum_{\ga\in S} \,\|\chi_\ga\|$.
\end{proof}

\begin{pr}\label{iodens}
Let $G$ be a  complex Lie group  and~$G_1$
a normal complex Lie subgroup of~$G$ containing  the   component of
the identity. Suppose that~$G_1$ is linear. If  the  continuous homomorphism of $\ptn$-algebras $\psi_1\!:\cO_{exp}(G)\to \cO_{exp}(G_1)$ induced by the embedding $G_1\to G$
has dense range, then so is  $\io\!:\cO_{exp}(G)\to \cO(G)$.
\end{pr}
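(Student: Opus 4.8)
The plan is to exploit the decomposition of $G$ into cosets of $G_1$, reduce the global density statement to density on a single coset, and then invoke the hypothesis together with the linearity of $G_1$. First I would record the coset structure. Since $G_1$ contains the component of identity $G_0$, it is open in $G$, so every coset $\ga G_1$ with $\ga\in G/G_1$ is open and closed. Hence $G$ is the disjoint union of the submanifolds $\ga G_1$, each biholomorphic to $G_1$ by left translation, and
$$
\cO(G)\cong\prod_{\ga\in G/G_1}\cO(\ga G_1)
$$
as locally convex spaces, because the compact-open topology corresponds to the product topology (each compact subset of $G$ meets only finitely many cosets). In view of this identification, it suffices to prove that for every $\ga$ the restriction map $\cO_{exp}(G)\to\cO(\ga G_1)$ has dense range and then to glue the coset-wise approximations.

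For the gluing step, the characteristic function $\chi_{\ga G_1}$ of a coset is locally constant, hence holomorphic, and bounded by the constant weight $1$; therefore $\chi_{\ga G_1}\in\cO_{exp}(G)$. Since $\cO_{exp}(G)$ is closed under pointwise multiplication (a product of two functions of exponential type is again of exponential type, the product of the dominating weights being submultiplicative), we have $\chi_{\ga G_1}\,f\in\cO_{exp}(G)$ for every $f\in\cO_{exp}(G)$. Given $F\in\cO(G)$, a compact set $K\subset G$ and $\ep>0$, the set $K$ meets only finitely many cosets $\ga_1 G_1,\dots,\ga_r G_1$; choosing for each $i$ a function $f_i\in\cO_{exp}(G)$ whose restriction approximates $F$ uniformly on $K\cap\ga_i G_1$ and setting $f:=\sum_{i=1}^{r}\chi_{\ga_i G_1}\,f_i$, we obtain an element of $\cO_{exp}(G)$ with $\sup_K|f-F|<\ep$. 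This reduces the problem to the single-coset density.

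Density on the identity coset $G_1$ is where the two hypotheses enter. The restriction map $\cO_{exp}(G)\to\cO(G_1)$ factors as $\cO_{exp}(G)\xrightarrow{\psi_1}\cO_{exp}(G_1)\hookrightarrow\cO(G_1)$. By assumption $\psi_1$ has dense range, and since $G_1$ is linear, $\cO_{exp}(G_1)$ is dense in $\cO(G_1)$ by \cite[Theorem~5.9]{Ak08}; as the inclusion is continuous, the composite therefore has dense range. For an arbitrary coset $gG_1$ I would transfer this by left translation: if a submultiplicative weight $\om$ dominates $f\in\cO_{exp}(G)$, then $\om(g^{-1})\,\om$ is again a submultiplicative weight and dominates $x\mapsto f(g^{-1}x)$, so left translation preserves $\cO_{exp}(G)$. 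Given $F\in\cO(gG_1)$, I apply the identity-coset density to $\tilde F(x):=F(gx)\in\cO(G_1)$ to find $h\in\cO_{exp}(G)$ with $h|_{G_1}$ close to $\tilde F$, and then $f(y):=h(g^{-1}y)$ lies in $\cO_{exp}(G)$ with $f|_{gG_1}$ close to $F$.

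The main obstacle is precisely the passage from the single coset $G_1$ to all cosets when $G/G_1$ is infinite: functions of exponential type on $G$ are globally constrained across cosets, so coset-wise density does not formally imply density in the product $\cO(G)$. The two devices that overcome this are that coset characteristic functions are themselves of exponential type (which permits localizing and recombining finitely many coset-wise approximations) and that the exponential-type condition is stable under left translation (which propagates the density established on $G_1$ to every coset). Once these are in place, the argument is a routine assembly of the steps above.
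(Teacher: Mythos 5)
Your proposal is correct and takes essentially the same route as the paper's proof: decompose $G$ into the cosets of the open subgroup $G_1$, use that coset characteristic functions lie in $\cO_{exp}(G)$ and that $\cO_{exp}(G)$ is invariant under left translation to reduce everything to density on the identity coset, and there combine the hypothesis on $\psi_1$ with Akbarov's Theorem~5.9 for the linear group $G_1$. The only notable (and harmless) difference is that you establish shift-invariance by a direct weight computation ($\om(g^{-1})\om$ dominates the translate), whereas the paper cites the fact that exponential-type functions are coefficients of holomorphic Banach-space representations; your version also makes explicit the characteristic-function gluing that the paper leaves terse.
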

\begin{proof}
Put $\Ga\!:= G/G_1$ and
for any $\ga\in \Ga$ denote by $\chi_\ga$ the characteristic
function of  $\ga$ (now as a subset of $G$).
Denote by ${}_gf$ the left shift a function $f$ on $g\in G$. Fix
$g_\ga\in \ga$ for each $\ga\in S$. Since every function in
$\cO_{exp}(G)$ is a coefficient of a holomorphic representation of~$G$
in some Banach space (see, e.g.,
\cite[Proposition~5.1]{ArAnF}), $\cO_{exp}(G)$ is invariant under shifts. So for any  finite subset $S$  of $G/G_1$ we have a
continuous homomorphism
\begin{equation}\label{defpsi}
\psi\!:\cO_{exp}(G)\to
\cO_{exp}(G_1)^{|S|}\!:f\mapsto({}_{g_\ga}f\chi_{G_1})\,.
\end{equation}

Note that $\psi_1$  is the restriction map, i.e.,
$\psi_1(f)= f\chi_{G_1}$. Hence,
since the range of $\psi_1$
is dense, so is the range of $\psi$. Further, since $G_1$ is linear,
\cite[Theorem~5.9]{Ak08} implies that the range of $\cO_{exp}(G_1)\to
\cO(G_1)$ is dense; so is the range of $\cO_{exp}(G_1)^{|S|}
\to \cO(G_1)^{|S|}$.

Finally, note that any $f\in \cO(G)$ can be
approximated by $\sum_{\ga\in S}f \chi_\ga$ for some finite subset~$S$ of~$G/G_1$. Thus the range of~$\io$ is dense.
\end{proof}

\begin{pr}\label{Akle}
Let $G$ be a compactly generated complex Lie group and  $G_1$
a normal complex Lie subgroup containing  the   component of
the identity. Suppose that the range of $\io\!:\cO_{exp}(G)\to \cO(G)$
is dense. If the natural embedding $\cO_{exp}(G_1)\to \cO(G_1)$ is
an Arens-Michael envelope, then so is $\cO_{exp}(G)\to \cO(G)$.
\end{pr}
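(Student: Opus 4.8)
The plan is to verify directly the two properties characterising an Arens--Michael envelope. Since $\cO(G)$ is itself an Arens--Michael algebra---its compact seminorms $p_K(f):=\sup_{x\in K}|f(x)|$ are submultiplicative---and $\io$ has dense range by hypothesis, it suffices to establish \emph{prenorm domination}: for every continuous submultiplicative prenorm $\|\cdot\|$ on $\cO_{exp}(G)$ there are a compact set $K\subseteq G$ and $C>0$ with $\|f\|\le C\,p_K(f)$ for all $f$. Indeed, once this is known, any continuous homomorphism from $\cO_{exp}(G)$ into an Arens--Michael algebra factors, by the universal property applied seminorm by seminorm, through the dense embedding into $\cO(G)$, which is exactly the universal property of the envelope.

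First I would organise such a prenorm along the cosets of $G_1$. Because $G$ is compactly generated and $G_1\supseteq G_0$ is open, the quotient $\Ga:=G/G_1$ is a finitely generated discrete group, and pulling functions back along $\pi\colon G\to\Ga$ gives a continuous unital homomorphism $q\colon\cO_{exp}(\Ga)\to\cO_{exp}(G)$ carrying the point mass $\chi_\ga$ to the characteristic function of the coset $\ga$ (these are orthogonal idempotents of $\cO_{exp}(G)$). Then $\|\cdot\|\circ q$ is a continuous submultiplicative prenorm on $\cO_{exp}(\Ga)$, so by \cite[Lemma~6.3]{Ak08} (already used in Proposition~\ref{expdis}) the set $S:=\{\ga\in\Ga:\|\chi_\ga\|>0\}$ is finite, while by \cite[Lemma~6.1]{Ak08} the series $\sum_{\ga\notin S}\chi_\ga$ converges in $\cO_{exp}(\Ga)$ to $1-\chi_S$, where $\chi_S:=\sum_{\ga\in S}\chi_\ga$. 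Continuity and subadditivity of $\|\cdot\|\circ q$ then force $\|1-\chi_S\|=0$, and hence for every $f$
\[
\|f\|\le\|f\chi_S\|+\|f\|\,\|1-\chi_S\|=\|f\chi_S\|\le\sum_{\ga\in S}\|f\chi_\ga\|.
\]

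It remains to bound each single-coset term $\|f\chi_\ga\|$ by a compact seminorm, and here the envelope hypothesis on $G_1$ enters. Fixing $g_\ga\in\ga$, the map $R_\ga\colon\cO_{exp}(G)\to\cO_{exp}(G_1)$, $R_\ga f:=({}_{g_\ga^{-1}}f)|_{G_1}$, is a continuous homomorphism (left translation is an automorphism of $\cO_{exp}(G)$ and restriction to the open subgroup $G_1$ is continuous), and it satisfies $p^{G_1}_{K_1}(R_\ga f)=p^{G}_{g_\ga K_1}(f)$. Since $f\chi_\ga$ is determined by $R_\ga f$ and $\|\cdot\|$ is submultiplicative, the assignment $R_\ga f\mapsto\|f\chi_\ga\|$ is a submultiplicative prenorm on the image subalgebra $R_\ga(\cO_{exp}(G))\subseteq\cO_{exp}(G_1)$. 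Promoting it to a continuous submultiplicative prenorm on \emph{all} of $\cO_{exp}(G_1)$ (the delicate point, discussed below) and invoking the hypothesis that $\cO_{exp}(G_1)\to\cO(G_1)$ is an Arens--Michael envelope---equivalently that every continuous submultiplicative prenorm on $\cO_{exp}(G_1)$ is dominated by a compact seminorm---would yield a compact $K_\ga\subseteq G_1$ and $C_\ga>0$ with $\|f\chi_\ga\|\le C_\ga\,p^{G_1}_{K_\ga}(R_\ga f)=C_\ga\,p^{G}_{g_\ga K_\ga}(f)$. Summing over the finite set $S$ and putting $K:=\bigcup_{\ga\in S}g_\ga K_\ga$ and $C:=\sum_{\ga\in S}C_\ga$ then gives $\|f\|\le C\,p_K(f)$.

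The main obstacle is precisely this promotion step. A priori the prenorm $R_\ga f\mapsto\|f\chi_\ga\|$ is defined only on the subalgebra $R_\ga(\cO_{exp}(G))$, which in general is a \emph{proper} subalgebra of $\cO_{exp}(G_1)$---Example~\ref{countex} shows that restriction to $G_1$ need not be surjective---so the $G_1$-envelope estimate does not apply to it directly. This is where the standing assumption that $\io$ has dense range must be used: dense range forces the range of $R_\ga$ to be dense in $\cO(G_1)$ in the compact-open topology, and, combined with the $G_1$-envelope hypothesis (which identifies the Arens--Michael and compact-open topologies on $\cO_{exp}(G_1)$), this should permit the continuous extension of the prenorm to $\cO_{exp}(G_1)$. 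I expect the technical heart of the argument to be a lemma producing, under the dense-range hypothesis, a continuous homomorphic section of $R_\ga$---an extension-by-zero map $\cO_{exp}(G_1)\to\cO_{exp}(G)$---whose existence genuinely fails when $\io$ does not have dense range, as Example~\ref{countex} illustrates; once such a section is available the promotion, and hence the whole proof, is immediate.
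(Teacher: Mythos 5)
Your overall route is the same as the paper's, only phrased via prenorm domination instead of the universal property: the reduction to finitely many cosets through $\si\!:\cO_{exp}(\Ga)\to\cO_{exp}(G)$ and Akbarov's lemmas, the estimate $\|f\|\le\sum_{\ga\in S}\|f\chi_\ga\|$, and the passage to single-coset data are exactly the paper's steps. But what you submit is not a proof: the step you yourself call ``the delicate point'' and ``the technical heart'' --- promoting the prenorm $R_\ga f\mapsto\|f\chi_\ga\|$ from the image subalgebra $R_\ga(\cO_{exp}(G))$ to all of $\cO_{exp}(G_1)$ --- is left as an expectation. At precisely this point the paper does something concrete: it defines
\[
\up\!:\cO_{exp}(G_1)^{|S|}\to B,\qquad (f_\ga)\mapsto\sum_{\ga\in S}\phi({}_{g_\ga^{-1}}f_\ga\chi_\ga),
\]
i.e.\ it uses exactly the zero-extension (``section'') maps you ask for, factors $\up$ through $\cO(G_1)^{|S|}$ by the envelope hypothesis (Arens--Michael envelopes commute with finite products), and puts $\al:=\be\psi'$ with $\psi'$ the restriction homomorphism on $\cO(G)$. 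Note in particular that the paper uses the dense-range hypothesis only at the very end, to make $\io$ an epimorphism and hence the factorizing homomorphism unique --- not, as you hope, to manufacture the section.

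This matters because the lemma you propose as the missing ingredient is false: density of the range of $\io$ does not yield a zero-extension map $\cO_{exp}(G_1)\to\cO_{exp}(G)$. Take $G=\CC\rtimes\Z$ with $n$ acting by $z\mapsto 2^n z$ and $G_1=G_0=\CC$ (an instance of the paper's Example~\ref{infcomphr}). This $G$ is linear, so $\io$ has dense range by \cite[Theorem~5.9]{Ak08}, and $\cO_{exp}(\CC)\to\cO(\CC)$ is an Arens--Michael envelope, so all hypotheses of Proposition~\ref{Akle} hold; yet $\ell_G((z,0))\asymp\log(1+|z|)$ because $G_0$ is exponentially distorted, so restrictions to $G_0$ of functions in $\cO_{exp}(G)$ have polynomial growth, $R(\cO_{exp}(G))=\CC[z]\subsetneq\cO_{exp}(\CC)$, and $e^z$ admits no extension by zero of exponential type. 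What governs the existence of the section is the distortion of $G_1$ in $G$, not density of the range of $\io$; that is why the paper's applications secure it by separate geometric arguments (cocompactness, hence undistortedness via \cite{Va99}, in Theorem~\ref{expclin}; Proposition~\ref{Pnilsdpr} in Example~\ref{infcomphr}). So your plan for closing the gap cannot work as stated. (A side remark: the subtlety you isolate is genuine --- the paper's formula for $\up$ tacitly assumes ${}_{g_\ga^{-1}}f_\ga\chi_\ga\in\cO_{exp}(G)$ for every $f_\ga\in\cO_{exp}(G_1)$, which the stated hypotheses alone do not guarantee; but flagging the difficulty, as you do, is not the same as resolving it.)
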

\begin{proof}
(The following argument is contained implicitly in \cite{Ak08}.) We use the notation from the proof of Proposition~\ref{iodens}.
It is sufficient to show that, for a Banach algebra $B$ with a
submultiplicative norm $\|\cdot\|$,  any  continuous homomorphism
$\phi\!:\cO_{exp}(G)\to B$ factors uniquely on
$\io\!:\cO_{exp}(G)\to \cO(G)$.

Denote by $\si$ the  continuous homomorphism $\cO_{exp}(\Ga)\to \cO_{exp}(G)$ induced
by the quotient map $G\to\Ga$, where $\Ga\!:= G/G_1$.
Note that $\Ga$ is
discrete and finitely generated; so Proposition~\ref{expdis}
implies that $\phi\si$ factors on  $\cO_{exp}(\Ga)\to \cO(\Ga)$.
Since $\cO(\Ga)$ is just the algebra of all functions on $\Ga$ endowed
with the topology of uniform convergence on finite subsets,  there
are a finite subset~$S$ of~$\Ga$ and $C>0$ such that
\begin{equation*}
\|\phi\si(h)\|\le C\,\max_{\ga\in S}|h(\ga)|\qquad \text{for each $h\in
\cO_{exp}(\Ga)$.}
\end{equation*}
In particular, since the characteristic function $\chi_\ga$ equals the image  of the delta-function at~$\ga$ under~$\si$, we have
$\|\phi(\chi_\ga)\|=0$ for $\ga\notin S$. By \cite[Lemma~6.2]{Ak08}, the series
$\sum_{\ga}\de_\ga$ converges to~$1$ in $\cO_{exp}(\Ga)$, hence $f=\sum_{\ga}f\chi_\ga$ for any
$f\in \cO_{exp}(G)$. Therefore,
\begin{equation}\label{phiffis}
\phi(f)=\sum_{\ga\in S }\phi(f\chi_\ga)\qquad\text{for each $f\in
\cO_{exp}(G)$.}
\end{equation}

Consider the homomorphism $\psi$ defined in~\eqref{defpsi} and note that
the same formula determines a continuous
homomorphism $\psi'\!:\cO(G)\to \cO(G_1)^{|S|}$.
Put
$$
\up\!: \cO_{exp}(G_1)^{|S|}\to B\!:(f_\ga)\mapsto \sum_{\ga\in S}
\phi({}_{g_\ga^{-1}}f_\ga \chi_\ga)\,.
$$
It is clear that \eqref{phiffis} implies $\phi=\up\psi$.

Note that the Arens-Michael envelope functor
commutes with finite products. By the assumption, $\cO_{exp}(G_1)\to \cO(G_1)$ is an
Arens-Michael envelope, so is $\cO_{exp}(G_1)^{|S|}\to\cO(G_1)^{|S|}$. Therefore
$\up$ factors on this map. Thus, we obtain the diagram
 \begin{equation*}
   \xymatrix{
 \cO_{exp}(G) \ar[r]^{\io}\ar[dd]_{\psi}\ar[drr]_{\phi} & \cO(G)\ar[dd]^{\psi'}\ar[dr]^{\al} &\\
 &&B\\
\cO_{exp}(G_1)^{|S|} \ar[r]\ar[urr]^{\up}& \cO(G_1)^{|S|}
\ar[ur]_{\be} &
 }
\end{equation*}
where $\be$ is determined by the universal property and
$\al\!:=\be\psi'$. Since the square and the big triangle are commutative, we have
$\phi=\al\io$.

By the hypothesis, the range of $\io$ is
dense in $\cO(G)$,
hence $\io$ is an epimorphism. Thus, the homomorphism $\al$ satisfying
$\phi=\al\io$ is unique.
\end{proof}

\begin{co}\label{expclin-1}
Let  $G$ be a compactly generated complex Lie group such that the component~$G_0$ of the identity is linear. If the homomorphism $\psi_0\!:\cO_{exp}(G)\to
\cO_{exp}(G_0)$ induced by the embedding $G_0\to G$ has dense
range, then the embedding $\io\!:\cO_{exp}(G)\to \cO(G)$ is an
Arens-Michael envelope.
\end{co}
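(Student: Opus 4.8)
The plan is to read this corollary as the exact composite of the two preceding propositions, specialized to $G_1=G_0$, together with the known connected-group result. First I would observe that the identity component $G_0$ is a normal complex Lie subgroup of~$G$ that (trivially) contains the component of identity, and that it is linear by hypothesis. Moreover, the map $\psi_0\!:\cO_{exp}(G)\to\cO_{exp}(G_0)$ in the statement is precisely the map denoted $\psi_1$ in Proposition~\ref{iodens} for this choice $G_1=G_0$. Hence the density hypothesis on $\psi_0$ is exactly the hypothesis of Proposition~\ref{iodens}, and I may conclude immediately that the range of $\io\!:\cO_{exp}(G)\to\cO(G)$ is dense.

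Next I would supply the connected case, namely that for the connected linear group~$G_0$ the embedding $\cO_{exp}(G_0)\to\cO(G_0)$ is an Arens-Michael envelope. This is the genuinely substantial input and, I expect, the main obstacle: it is \emph{not} a formal consequence of density, since (as stressed in the Introduction) density of $\cO_{exp}(G_0)$ in $\cO(G_0)$ does not by itself make the inclusion an Arens-Michael envelope. The route is to produce the algebra $\cR(G_0)$ of regular functions --- available because a linear group carries an affine algebraic structure --- as a subalgebra of $\cO_{exp}(G_0)$ whose Arens-Michael completion already recovers $\cO(G_0)$. The delicate point is the inclusion $\cR(G_0)\subset\cO_{exp}(G_0)$, which forces one to describe explicitly the structure of the exponential-type functions on a connected linear group. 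I would obtain this from \cite[Theorem~5.12(B)]{ArAnF}, where the necessary technical results for connected groups (from \cite{ArAMN,ArAnF}) are assembled.

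Finally, with the density of $\io$ in hand from the first step and the Arens-Michael envelope property for $G_0$ from the second step, I would apply Proposition~\ref{Akle} with $G_1=G_0$. Both of its hypotheses --- denseness of the range of $\io$, and that $\cO_{exp}(G_0)\to\cO(G_0)$ is an Arens-Michael envelope --- are now verified, so its conclusion gives precisely that $\io\!:\cO_{exp}(G)\to\cO(G)$ is an Arens-Michael envelope, which is the assertion of the corollary. In short, the corollary is the two-step chain ``Proposition~\ref{iodens} $\Rightarrow$ density, then \cite[Theorem~5.12(B)]{ArAnF} feeds Proposition~\ref{Akle}''; all the real analytic difficulty is concentrated in the connected case and has been discharged externally.
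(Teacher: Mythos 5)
Your proposal is correct and coincides with the paper's own proof: Proposition~\ref{iodens} (with $G_1=G_0$) gives density of the range of~$\io$, the cited result \cite[Theorem~5.12(B)]{ArAnF} handles the connected linear group~$G_0$, and Proposition~\ref{Akle} then yields the conclusion. Your additional remarks on $\cR(G_0)$ merely unpack what is inside the cited theorem and do not alter the argument.
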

\begin{proof}
Since $G_0$ is linear, it follows from Proposition~\ref{iodens}
that~$\io$ has dense range. Moreover,   \cite[Theorem~5.12(B)]{ArAnF} implies that the natural embedding $\cO_{exp}(G_0)\to
\cO(G_0)$ is an Arens-Michael envelope because $G_0$ is linear and
connected. Being compactly generated, $G$ satisfies to the
hypothesis of Proposition~\ref{Akle}, so~$\io$ is an Arens-Michael
envelope.
\end{proof}

We use the following notation. For positive functions $\tau_1$ and $\tau_2$ on a set $X$, we  write $\tau_1\lesssim\tau_2$ if there are $C, D > 0$ such that $\tau_1(x)\le C \tau_2(x) + D$ for every $x \in X$.

\begin{thm}\label{expclin}
Let $G$ be a complex Lie group  with finitely many components. If $G$~is linear, then the embedding $\io\!:\cO_{exp}(G)\to\cO(G)$ is an Arens-Michael envelope.
\end{thm}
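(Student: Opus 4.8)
The plan is to reduce the statement to Corollary~\ref{expclin-1}, whose hypotheses are that $G$ is compactly generated, that the identity component $G_0$ is linear, and that the restriction homomorphism $\psi_0\colon\cO_{exp}(G)\to\cO_{exp}(G_0)$ has dense range. Two of these are quick. First, since $G_0$ is a connected Lie group it is generated by any relatively compact neighbourhood of the identity; adjoining a finite set of coset representatives for $G/G_0$ together with their inverses yields a relatively compact generating set of $G$, so $G$ is compactly generated. Second, because $G$ has finitely many components, Theorem~\ref{Linfcc} gives $\LinC(G_0)=\LinC(G)$; as $G$ is linear, i.e. $\LinC(G)=\{1\}$, we conclude $\LinC(G_0)=\{1\}$, so $G_0$ is linear.

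The substantive point is the density of the range of $\psi_0$, and I would in fact prove the stronger assertion that $\psi_0$ is surjective, by extending functions by zero. Given $f_0\in\cO_{exp}(G_0)$, define $f$ on $G$ by $f=f_0$ on $G_0$ and $f=0$ on the remaining, finitely many, components. Each component is open, so $f$ is holomorphic on $G$ and $\psi_0(f)=f_0$; it remains only to check that $f$ is of exponential type. Here I would use the word-length characterization \eqref{exptwlf}, available because both $G$ and $G_0$ are compactly generated: it suffices to dominate $|f|$ by $Ke^{C\ell}$ for a word-length function $\ell$ on $G$.

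The one genuinely technical step, and the main obstacle, is comparing the word length $\ell_0$ on $G_0$ with the restriction to $G_0$ of a word length $\ell$ on $G$; I need $\ell_0\lesssim\ell|_{G_0}$, the assertion that the finite-index, hence cocompact, inclusion $G_0\hookrightarrow G$ is a quasi-isometry. This can be seen by Reidemeister--Schreier rewriting: fixing coset representatives $t_0=1,t_1,\dots,t_{k-1}$ for the normal subgroup $G_0$ and a relatively compact symmetric generating set $U$ of $G$, one factors a word $u_1\cdots u_m$ of minimal length $m=\ell(g)$ representing $g\in G_0$ as $\prod_{j=1}^m\bigl(r_{j-1}u_jr_j^{-1}\bigr)$, where $r_j$ is the representative of the coset of $u_1\cdots u_j$, so that $r_0=r_m=1$. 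Each factor lies in the relatively compact set $\bigl(\bigcup_{i,i'}t_iUt_{i'}^{-1}\bigr)\cap G_0$, which therefore generates $G_0$ and witnesses $\ell_0(g)\le m=\ell(g)$ for $g\in G_0$; passing to arbitrary word lengths via their Lipschitz equivalence gives $\ell_0\lesssim\ell|_{G_0}$. Consequently, if $|f_0(g)|\le Ke^{C\ell_0(g)}$ on $G_0$ then $|f(g)|\le K'e^{C'\ell(g)}$ on all of $G$, the estimate being trivial off $G_0$, so $f\in\cO_{exp}(G)$. Thus $\psi_0$ is surjective, its range is dense, and Corollary~\ref{expclin-1} applies to complete the proof.
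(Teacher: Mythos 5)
Your proposal is correct and follows essentially the same route as the paper: reduce to Corollary~\ref{expclin-1} and prove that $\psi_0$ is in fact \emph{surjective} by extending functions by zero off $G_0$, the whole matter resting on the comparison $\ell_0\lesssim\ell|_{G_0}$ of word lengths. The only real divergence is at that last step: the paper simply invokes the fact that a cocompact closed subgroup of a compactly generated locally compact group is undistorted (Varopoulos, \cite[Lemma~1]{Va99}), whereas you prove the finite-index case directly by the Reidemeister--Schreier rewriting $g=\prod_j r_{j-1}u_jr_j^{-1}$, which is valid and makes the argument self-contained; your detour through Theorem~\ref{Linfcc} to get linearity of $G_0$ is also fine, though unnecessary, since a subgroup of a linear group is trivially linear.
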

\begin{proof}
Since $G$ is linear, so is~$G_0$. By Corollary~\ref{expclin-1}, it
is sufficient to show that the restriction map
$\psi_0\!:\cO_{exp}(G)\to \cO_{exp}(G_0)$ has dense range.

In fact, $\psi_0$ is surjective. Indeed, denote by $\mu(f)$ the extension of~$f\in \cO(G_0)$ such that $\mu(f)(g)=0$ when $g\notin G_0$. Since $G/G_0$ is finite, $G_0$ is obviously cocompact in~$G$.
Any cocompact  closed subgroup of a compactly generated locally compact group  is
undistorted \cite[Lemma~1]{Va99}, that is, $\ell_0\lesssim\ell$ for any word length functions $\ell_0$ and $\ell$ on $G_0$ and $G$, respectively. Therefore we have from
\eqref{exptwlf} that  $\mu(f)\in\cO_{exp}(G)$  for any $f\in \cO_{exp}(G_0)$. Note that $\psi_0\mu$ is the identity map on $\cO_{exp}(G_0)$, so $\psi_0$ is surjective.
\end{proof}

For a complex Lie group  $G$, we consider the quotient homomorphism $\pi\!:G\to G/\LinC(G)$ and also the
natural embeddings
$$
\io\!:\cO_{exp}(G)\to \cO(G)\quad\text{and}\quad
j\!:\cO_{exp}(G/\LinC(G))\to \cO(G/\LinC(G)).
$$

Recall that a \emph{Stein group} is a complex Lie group such that its underlying complex manifold is a Stein manifold.
The following result extends \cite[Theorem~5.12]{ArAnF} from the connected case to the case of finitely many components.

\begin{thm}\label{OGhrcon}
If $G$ is a complex Lie group with finitely many
components, then

\emph{(A)}~$j\wt\pi^{-1}\!:\cO_{exp}(G)\to \cO(G/\LinC(G))$ is an
Arens-Michael envelope;

\emph{(B)}~$G$ is linear if and only if it is a Stein group and
$\io\!:\cO_{exp}(G)\to \cO(G)$ is an Arens-Michael envelope.
\end{thm}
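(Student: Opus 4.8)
The plan is to deduce both parts from the connected-case results of~\cite{ArAnF}, using that finitely many components force $\LinC(G)=\wtLinC(G)$ (Theorem~\ref{Linfcc}); this is what makes the earlier machinery, which was stated for $\wtLinC$, applicable to the ordinary linearizer $\LinC$.

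For Part~(A), first apply Theorem~\ref{Linfcc} to identify $\LinC(G)=\wtLinC(G)$, so that the quotient map $\pi\!:G\to G/\LinC(G)$ is the map~$\pi_0$ of Theorem~\ref{redliexpf}. Part~(B) of that theorem then says $\wt\pi\!:\cO_{exp}(G/\LinC(G))\to\cO_{exp}(G)$ is a Hopf $\ptn$-algebra isomorphism, so $\wt\pi^{-1}$ is well defined. Next observe that $\LinC(G)\subset G_0$ (by Corollary~\ref{LwtL} and Lemma~\ref{lemmcenter}), so $G/\LinC(G)$ again has finitely many components; moreover $\LinC(G/\LinC(G))=\{1\}$, because every finite-dimensional holomorphic representation of~$G$ factors through $G/\LinC(G)$ and this correspondence is a bijection onto the representations of the quotient. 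Hence $G/\LinC(G)$ is linear, and Theorem~\ref{expclin} shows that $j\!:\cO_{exp}(G/\LinC(G))\to\cO(G/\LinC(G))$ is an Arens-Michael envelope. Precomposing an Arens-Michael envelope with the $\ptn$-algebra isomorphism $\wt\pi^{-1}$ yields again an Arens-Michael envelope; thus $j\wt\pi^{-1}$ is one.

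For Part~(B), the forward implication is quick: a linear group is a closed complex subgroup of some $\GL_n(\CC)$ and so is a Stein group (a standard fact; cf.~\cite{HiNe}), while Theorem~\ref{expclin} gives that $\io$ is an Arens-Michael envelope. For the converse, suppose $G$ is Stein and $\io$ is an Arens-Michael envelope; then the range of $\io$ is dense in $\cO(G)$. By Theorem~\ref{redliexpf}(C) combined with $\wtLinC(G)=\LinC(G)$, every function in $\cO_{exp}(G)$ is constant on the cosets of $\LinC(G)$. Since convergence in $\cO(G)$ is uniform on compact sets, coset-constancy passes to limits, so every function in $\cO(G)$ is constant on these cosets. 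As $G$ is Stein, $\cO(G)$ separates the points of~$G$; therefore the cosets are singletons, i.e.\ $\LinC(G)=\{1\}$. Finally, $\LinC(G)=\LinC(G_0)=\{1\}$ by Theorem~\ref{Linfcc}, so $G_0$ is a connected linear group and $G$, being a finite extension of~$G_0$, is linear as well.

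The routine points---that an envelope composed with an isomorphism is an envelope, that $\LinC(G/\LinC(G))=\{1\}$, and that the Stein property yields point separation---are standard. The step demanding the most care is the converse in Part~(B): one must verify that the density furnished by the envelope property interacts correctly with the coset-constancy of exponential-type functions, and that the passage from $\LinC(G)=\{1\}$ back to genuine linearity of~$G$ really uses the finitely-many-components hypothesis through the identity $\LinC(G)=\LinC(G_0)$ of Theorem~\ref{Linfcc}. This is precisely where the counterexample of Example~\ref{countex}, with infinitely many components, would break the argument.
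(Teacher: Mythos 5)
Your Part (A) and the forward direction of Part (B) follow essentially the paper's own route: identify $\LinC(G)=\wtLinC(G)$ via Theorem~\ref{Linfcc}, invert $\wt\pi$ via Theorem~\ref{redliexpf}(B) (noting, as you should say explicitly, that finitely many components gives compact generation), check that $G/\LinC(G)$ is linear with finitely many components, and apply Theorem~\ref{expclin}. The genuine divergence is in the converse of Part (B). The paper uses the full universal property: since $j\wt\pi^{-1}$ and $\io$ are both Arens-Michael envelopes of $\cO_{exp}(G)$, uniqueness yields a topological isomorphism $\cO(G/\LinC(G))\to\cO(G)$ of Stein algebras, and Forster's duality theorem then forces the quotient map $G\to G/\LinC(G)$ to be a biholomorphism, whence $\LinC(G)=\{1\}$. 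You instead use only the weakest consequence of the envelope property, namely density of the range of $\io$, combined with coset-constancy (Theorem~\ref{redliexpf}(C)) and holomorphic separability of the Stein manifold $G$. This is sound, and it is in fact the very mechanism the paper isolates later as Lemma~\ref{nesswtL1} and Proposition~\ref{tLinness} (used there for Theorem~\ref{OGhrcon1}), though the paper does not deploy it in the proof of this theorem. Your route is more elementary (no Forster duality) and proves something formally stronger, namely that for a Stein group with finitely many components mere density of $\cO_{exp}(G)$ in $\cO(G)$ already forces linearity; the paper's route buys the extra geometric conclusion that $G\to G/\LinC(G)$ is biholomorphic. Two caveats. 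First, your justification that a linear group is Stein, via ``closed complex subgroup of $\GL_n(\CC)$,'' is not quite right: linearity here means existence of a faithful finite-dimensional holomorphic representation, whose image need not be closed; the correct standard fact, which the paper cites, is that a connected linear group is biholomorphically equivalent to an affine algebraic variety, hence Stein, and Steinness of $G$ then follows since $G/G_0$ is finite. Second, both your claim that $\LinC(G/\LinC(G))=\{1\}$ with finitely many components implies linearity of the quotient, and your final step that a finite extension of a linear group is linear, rest on the holomorphic induced-representation construction from the proof of Theorem~\ref{Linfcc}; the paper glosses the same point (``it is obvious that $G/L$ is linear''), so this is a shared omission rather than a new gap.
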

\begin{proof}
The structure of the argument is the same as in~\cite[Theorem~5.12]{ArAnF}. First, we prove the necessity in Part~(B), then Part~(A) and, finally, the sufficiency in Part~(B).

(1)~If $G$ is linear, then so is~$G_0$. Since any connected linear group is biholomorphically equivalent to an affine algebraic complex variety, we have that $G_0$ is a Stein group and so is~$G$. On the other hand, Theorem~\ref{expclin} implies that $\io$ is an Arens-Michael envelope. Thus the condition in Part~(B) is necessary.

(2)~Write $L\!:=\LinC(G)$. It is
obvious that $G/L$ is linear and have finitely many
components. By Theorem~\ref{Linfcc}, we have $L=\wtLinC(G)$. Since any Lie group with finitely many   components is compactly generated, Theorem~\ref{redliexpf}(B) implies that $\wt\pi$ is a topological isomorphism.  The
above argument shows that $j$ is an Arens-Michael envelope, so
is $j\wt\pi^{-1}$. Part~(A) is proved.

(3)~Suppose
that $G$ is a Stein group and $\io$ is an Arens-Michael envelope.
Since, by Part~(A), so is $j\wt\pi^{-1}$, the universal property of  the
Arens-Michael enveloping functor  implies that $\cO(G/L)\to
\cO(G) $ is a topological isomorphism of Stein algebras. By
Forster's Duality Theorem \cite{Fo67}, the quotient map $G\to
G/L$ is a biholomorphic equivalence, therefore $L$~is trivial. This completes the proof of the sufficiency in Part~(B).
\end{proof}

Note that we have obtained not only a description of the Arens-Michael envelope of $\cO_{exp}(G)$ in our partial case but also a criterion of linearity.

\section{Holomorphic reflexivity}
\label{sec:holrefl}

In this section the main results, two theorems on holomorphic reflexivity, are proved.
First, we consider a compactly generated Stein group~$G$ with the corresponding (commutative nuclear Fr\'echet Arens-Michael) Hopf $\ptn$-algebra $\cO(G)$ and establish necessary conditions for holomorphic reflexivity. We denote, as usual, the component of the identity by~$G_0$.

\begin{pr}\label{tLinness}
Let $G$ be a compactly generated Stein group. If $\cO(G)$ is holomorphically reflexive, then $\wtLinC(G)$ and $\LinC(G_0)$ are trivial.
\end{pr}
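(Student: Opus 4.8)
The plan is to reduce the whole statement to the single assertion $\wtLinC(G)=\{1\}$ and then to extract this from the density of $\cO_{exp}(G)$ in $\cO(G)$ together with the point-separation property of a Stein manifold. First I would observe that Corollary~\ref{LwtL} gives the chain $\LinC(G_0)=\wtLinC(G_0)\subset \wtLinC(G)$, so the triviality of $\LinC(G_0)$ is an automatic consequence of the triviality of $\wtLinC(G)$. Hence it suffices to prove $\wtLinC(G)=\{1\}$.

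Next I would feed the reflexivity hypothesis through the reduction already available in the paper. By Proposition~\ref{AMeqHR}, holomorphic reflexivity of $\cO(G)$ is equivalent to $\io\!:\cO_{exp}(G)\to\cO(G)$ being an Arens-Michael envelope; in particular the range of $\io$ is dense in $\cO(G)$. On the other hand, Theorem~\ref{redliexpf}(C) asserts that every function in $\cO_{exp}(G)$ is constant on the cosets of $\wtLinC(G)$. The decisive step is to propagate this constancy from the dense subspace to the whole of $\cO(G)$: fixing $h\in\wtLinC(G)$, the functional $f\mapsto f(h)-f(1)$ is continuous on $\cO(G)$ in the compact-open topology (point evaluations are continuous there) and vanishes on $\cO_{exp}(G)$, hence on its closure $\cO(G)$. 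Thus $f(h)=f(1)$ for every $f\in\cO(G)$.

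Finally I would invoke the Stein hypothesis. Since $G$ is a Stein group, its underlying manifold is holomorphically separable, so $\cO(G)$ separates the points of $G$. Combining this with the equality $f(h)=f(1)$ for all $f\in\cO(G)$ forces $h=1$, and as $h\in\wtLinC(G)$ was arbitrary we conclude $\wtLinC(G)=\{1\}$; the triviality of $\LinC(G_0)$ then follows from the first paragraph.

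I do not expect a genuine obstacle here: the argument is short, and its only slightly delicate point is the passage from the dense subspace $\cO_{exp}(G)$ to its closure, which needs nothing more than continuity of point evaluations. What is essential, and must be kept in view, is that both hypotheses are really used --- the Stein condition supplies point separation, while reflexivity supplies density. The necessity of the latter is illustrated by Example~\ref{countex}: there $\cO_{exp}(G)$ is constant on the identity component and therefore not dense in $\cO(G)$, so the propagation step fails, and indeed $\wtLinC(G)\ne\{1\}$ while $\cO(G)$ is not holomorphically reflexive.
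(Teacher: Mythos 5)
Your proof is correct and follows essentially the same route as the paper: it combines Proposition~\ref{AMeqHR}, Theorem~\ref{redliexpf}(C), and Corollary~\ref{LwtL} in exactly the same way, with your density-plus-holomorphic-separability step being precisely the content of the paper's Lemma~\ref{nesswtL1}. The only difference is presentational --- you inline the lemma and make the passage to the closure explicit via continuity of point evaluations, which the paper leaves implicit.
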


We need the following simple lemma.
\begin{lm}\label{nesswtL1}
Let $G$~be a Stein group such that $\cO_{exp}(G)$ is dense in~$\cO(G)$. If~$H$ is a closed subgroup such that each function in~$\cO_{exp}(G)$ is constant on~$H$, then $H=\{1\}$.
\end{lm}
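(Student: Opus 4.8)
The plan is to prove the contrapositive by a direct separation argument. Suppose $H\neq\{1\}$, so there is an element $h\in H$ with $h\neq 1$. Because $G$ is a Stein group, holomorphic functions separate points of~$G$, so there exists $f\in\cO(G)$ with $f(h)\neq f(1)$. The goal is to produce a function in $\cO_{exp}(G)$ that is \emph{not} constant on~$H$, contradicting the hypothesis on~$H$.

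\medskip

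First I would use the density of $\cO_{exp}(G)$ in $\cO(G)$: pick $f\in\cO(G)$ with $f(h)\neq f(1)$ as above, set $\ep\!:=\tfrac13|f(h)-f(1)|>0$, and choose $f_0\in\cO_{exp}(G)$ approximating $f$ uniformly on the compact set $\{1,h\}$ to within~$\ep$. Then the triangle inequality gives
$$
|f_0(h)-f_0(1)|\ge |f(h)-f(1)|-|f_0(h)-f(h)|-|f_0(1)-f(1)|\ge 3\ep-\ep-\ep=\ep>0,
$$
so $f_0(h)\neq f_0(1)$. Since $1,h\in H$, the function $f_0\in\cO_{exp}(G)$ is not constant on~$H$, contradicting the assumption. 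Hence no nontrivial $h\in H$ can exist, i.e.\ $H=\{1\}$.

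\medskip

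The main point to get right is the topology in which density is invoked: $\cO(G)$ carries the topology of uniform convergence on compact subsets, so density of $\cO_{exp}(G)$ in $\cO(G)$ means precisely that we can approximate uniformly on any compact set, in particular on the two-point set $\{1,h\}$. This is exactly what the estimate above requires, so the approximation step is legitimate. I do not anticipate a serious obstacle here; the only subtlety is the appeal to the Stein property to separate $1$ from $h$, but that is immediate from the definition of a Stein manifold (holomorphic functions separate points). The lemma is genuinely elementary given the two hypotheses, and its role is simply to package this separation-plus-approximation idea for reuse in the proof of Proposition~\ref{tLinness}.
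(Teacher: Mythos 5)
Your proof is correct and uses exactly the same two ingredients as the paper, just organized contrapositively: the paper observes that density of $\cO_{exp}(G)$ in the compact-open topology forces every function in $\cO(G)$ to be constant on~$H$ and then invokes holomorphic separability of the Stein manifold~$G$, whereas you invoke separability first to get $f\in\cO(G)$ with $f(h)\neq f(1)$ and then use density to transfer the separation to some $f_0\in\cO_{exp}(G)$. This is essentially the same argument, so no further comparison is needed.
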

\begin{proof}
Since $\cO_{exp}(G)$ is dense in~$\cO(G)$, each function in  $\cO(G)$ is constant on~$H$.  Being a Stein manifold, $G$ is holomorphically separable; hence $H=\{1\}$.
\end{proof}

\begin{proof}[Proof of Proposition~\ref{tLinness}]
Since $\cO(G)$ is holomorphically reflexive, it follows from Proposition~\ref{AMeqHR} that  the embedding $\cO_{exp}(G)\to\cO(G)$ is an Arens-Michael envelope; in particular, it has dense range. On the other hand, by Theorem~\ref{redliexpf}(C), each function in  $\cO_{exp}(G)$ is constant on~$\wtLinC(G)$. So Lemma~\ref{nesswtL1} implies that $\wtLinC(G)=\{1\}$. Finally, $\LinC(G_0)=\{1\}$ by~Corollary~\ref{LwtL}.
\end{proof}

\begin{thm}\label{OGhrcon1}
Let  $G$ be a Stein group with finitely many
components. The following conditions are equivalent.
\begin{mycompactenum}
\item~$\cO(G)$ is holomorphically reflexive;
\item~$\LinC(G_0)$ is trivial;
\item~$\wtLinC(G)$ is trivial.
\end{mycompactenum}
\end{thm}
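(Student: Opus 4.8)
The plan is to assemble the theorem from the machinery already in place, since all the genuine analysis has been carried out in Theorems~\ref{Linfcc} and~\ref{OGhrcon} and Propositions~\ref{AMeqHR} and~\ref{tLinness}. The first thing I would record is that, because $G$ has finitely many components, Theorem~\ref{Linfcc} gives $\LinC(G_0)=\wtLinC(G)=\LinC(G)$. Hence conditions~(ii) and~(iii) are literally the same statement, and moreover both are equivalent to the linearity of~$G$: a complex Lie group is linear exactly when its linearizer is trivial, since $G/\LinC(G)$ is always linear (this is the ``obvious'' fact already used in step~(2) of the proof of Theorem~\ref{OGhrcon}). So after this reduction it remains only to tie holomorphic reflexivity of $\cO(G)$ to this common condition.

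For the implication (i)$\Rightarrow$(iii) --- and simultaneously (i)$\Rightarrow$(ii) --- I would invoke Proposition~\ref{tLinness} directly. Any Lie group with finitely many components is compactly generated, so $G$ is a compactly generated Stein group, and Proposition~\ref{tLinness} then forces both $\wtLinC(G)$ and $\LinC(G_0)$ to be trivial once $\cO(G)$ is holomorphically reflexive.

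For the converse (iii)$\Rightarrow$(i) I would run the chain of equivalences through the Arens-Michael envelope. Assuming $\wtLinC(G)=\{1\}$, Theorem~\ref{Linfcc} yields $\LinC(G)=\{1\}$, so $G$ is linear; since $G$ is in addition a Stein group, the forward direction of Theorem~\ref{OGhrcon}(B) shows that the embedding $\io\colon\cO_{exp}(G)\to\cO(G)$ is an Arens-Michael envelope. Proposition~\ref{AMeqHR} then translates this precisely into holomorphic reflexivity of $\cO(G)$. Combined with the equivalence (ii)$\Leftrightarrow$(iii) already noted, this closes the cycle and establishes the three-fold equivalence.

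The proof is short because the real difficulties have been absorbed into the preparatory results: the hard analytic input is the identification of the Arens-Michael envelope of $\cO_{exp}(G)$ in Theorem~\ref{OGhrcon} (resting ultimately on undistortedness of cocompact subgroups and on Forster's Duality Theorem) together with the necessity argument of Proposition~\ref{tLinness} via holomorphic separability. The only point that still requires a little care here is the clean dictionary between the two linearizers supplied by the finitely-many-components hypothesis in Theorem~\ref{Linfcc}; it is exactly this that lets me replace the a priori subtle Banach-algebra condition $\wtLinC(G)=\{1\}$ by the geometric condition that $G$ be linear, to which Theorem~\ref{OGhrcon}(B) applies. I expect this bookkeeping --- making sure the Stein hypothesis is the one fed into Theorem~\ref{OGhrcon}(B) and that compact generation is available for Proposition~\ref{tLinness} --- to be the main thing to get right, rather than any new computation.
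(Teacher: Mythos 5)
Your proposal is correct and follows essentially the same route as the paper: Proposition~\ref{tLinness} (with compact generation coming from finitely many components) for $\mathrm{(i)}\Rightarrow\mathrm{(ii)},\mathrm{(iii)}$, Theorem~\ref{Linfcc} for $\mathrm{(ii)}\Leftrightarrow\mathrm{(iii)}$, and Theorem~\ref{OGhrcon}(B) together with Proposition~\ref{AMeqHR} for $\mathrm{(iii)}\Rightarrow\mathrm{(i)}$. The only difference is that you spell out the intermediate fact that trivial linearizer plus finitely many components gives linearity of~$G$ (via linearity of $G/\LinC(G)$), which the paper leaves implicit in its citation of Theorem~\ref{OGhrcon}(B).
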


\begin{proof}
Since $G$~is compactly generated,  the implications  $\mathrm{(i)}\Longrightarrow\mathrm{(ii)}$ and $\mathrm{(i)}\Longrightarrow\mathrm{(iii)}$ follow from Proposition~\ref{tLinness}.
Theorem~\ref{Linfcc} implies that $\LinC(G_0)=\LinC(G)=\wtLinC(G)$, therefore $\mathrm{(ii)}\Longleftrightarrow\mathrm{(iii)}$. Finally, the implication $\mathrm{(iii)}\Longrightarrow\mathrm{(i)}$ follows from Theorem~\ref{OGhrcon}(B) and Proposition~\ref{AMeqHR}.
\end{proof}
\begin{rems}
(A)~Since we  assume that the group has finitely many
components, Conditions~(ii) and~(iii) in Theorem~\ref{OGhrcon1} can be formulated more traditionally:~$G_0$ is linear and~$G$ is linear, respectively. (The latter because $\wtLinC(G)=\LinC(G)$ by Theorem~\ref{Linfcc}.) But I guess that the formulation in the theorem is adequate and the equivalence $\mathrm{(i)}\Longleftrightarrow\mathrm{(iii)}$ holds for all compactly generated Stein groups (as formulated in Conjecture~\ref{mainconj}).

(B)~Since any linear group is a Stein group, the implication  $\mathrm{(ii)}\Longrightarrow\mathrm{(i)}$  in  Theorem~\ref{OGhrcon1} is  true under the assumption that $G$ has finitely many
components. But the converse does not hold in general. For example, for a compact torus~$T$, we have $\cO(T)\cong\CC$, so $\cO(T)$ is holomorphically reflexive. But it is obvious that $T$~is  not linear.

(C)~Example~\ref{countex} shows that $\mathrm{(ii)}$ does not imply
$\mathrm{(i)}$ when $G/G_0$ is infinite.
\end{rems}

In the case of finitely many
components we prove the holomorphic reflexivity of the (cocommutative nuclear Fr\'echet Arens-Michael) Hopf $\ptn$-algebra $\cA_{exp}(G)$ without the assumption that $G$~is a linear group. First, we establish a more general result.

\begin{thm}\label{AeGhr-1}
Let $G$ be a compactly generated complex Lie group. If $\cO(G/\wtLinC(G))$ is holomorphically reflexive, then so is~$\cA_{exp}(G)$.
\end{thm}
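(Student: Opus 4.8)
The plan is to reduce to the quotient $\tilde G:=G/\wtLinC(G)$ and then to deduce the holomorphic reflexivity of $\cA_{exp}(\tilde G)$ from that of $\cO(\tilde G)$ by a general biduality argument. For the reduction I would invoke Theorem~\ref{redliexpf}(A): the quotient homomorphism $\pi_0\colon G\to\tilde G$ induces a Hopf $\ptn$-algebra isomorphism $\wt\pi_0'\colon\cA_{exp}(G)\to\cA_{exp}(\tilde G)$. Because the holomorphic dual is a functor and the canonical map $\ka_{(-)}$ of~\eqref{defioH} is natural, holomorphic reflexivity is preserved by Hopf $\ptn$-algebra isomorphisms; hence it suffices to show that $\cA_{exp}(\tilde G)$ is holomorphically reflexive. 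Note that $\tilde G$ is compactly generated, being a quotient of~$G$, so all the earlier results apply to it.

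Next I would identify $\cA_{exp}(\tilde G)$ as a holomorphic dual. Indeed $\cO(\tilde G)$ is an NFAM Hopf algebra, and $\cO(\tilde G)^\bullet=\bigl(\cA(\tilde G)\bigr)\sphat\cong\cA_{exp}(\tilde G)$ by Theorem~\ref{AMEAG}. Thus the problem becomes: if the NFAM Hopf algebra $\cO(\tilde G)$ is holomorphically reflexive, then so is its holomorphic dual $\cA_{exp}(\tilde G)$.

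I would isolate this as a general \emph{biduality lemma}: if $H$ is a holomorphically reflexive NFAM Hopf algebra, then $H^\bullet$ is holomorphically reflexive. Reflexivity of $H$ already gives that $H^\bullet$ is Fr\'echet (Definition~\ref{Homrelfdef}) and that $H^{\bullet\bullet}\cong H$ is Fr\'echet; together with Corollary~\ref{conuclAMe} this places $H^\bullet$ and $H^{\bullet\bullet}$ in the NFAM world, so $(\ka_H)^\bullet$ and $\ka_{H^\bullet}$ are defined. The crux, and the step I expect to be the main obstacle, is the triangle identity $\ka_{H^\bullet}\circ(\ka_H)^\bullet=\id_{H^\bullet}$. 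I would prove it by writing $H^\bullet=\wh{H'}$ with Arens--Michael map $\alpha_{H'}\colon H'\to H^\bullet$, recalling that by construction (cf.~\eqref{defioH}) $\ka_H$ is characterized by $\ka_H\circ\alpha_{(H^\bullet)'}=(\alpha_{H'})'$ after the biduality identification $H''\cong H$, and that $\ka_{H^\bullet}$ satisfies the analogous equation with $H$ replaced by $H^\bullet$. Using the naturality of $\alpha$, the contravariance of the strong dual functor, the involutivity $(-)''\cong\id$ on well-behaved spaces, and these two defining equations, a diagram chase yields $\ka_{H^\bullet}\circ(\ka_H)^\bullet\circ\alpha_{H'}=\alpha_{H'}$; since $\alpha_{H'}$ has dense range and all maps are continuous, the triangle identity follows. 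Granting it, if $\ka_H$ is a topological isomorphism then so is $(\ka_H)^\bullet$, whence $\ka_{H^\bullet}=\bigl((\ka_H)^\bullet\bigr)^{-1}$ is a topological isomorphism and $H^\bullet$ is holomorphically reflexive.

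Applying the lemma with $H=\cO(\tilde G)$ shows that $\cA_{exp}(\tilde G)\cong\cO(\tilde G)^\bullet$ is holomorphically reflexive, and transporting along $\wt\pi_0'$ gives the holomorphic reflexivity of $\cA_{exp}(G)$. The only genuinely delicate point is the bookkeeping of canonical identifications in the triangle identity; the rest is formal, resting on functoriality and the universal property of the Arens--Michael envelope.
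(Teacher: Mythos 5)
Your proposal is correct, but it takes a genuinely different route from the paper's own proof. The paper argues computationally: using $\cA_{exp}(G)'\cong\cO_{exp}(G)$ and Theorem~\ref{redliexpf}(B), then the hypothesis together with Proposition~\ref{AMeqHR}, it identifies $\cA_{exp}(G)^\bullet\cong\cO(G/\wtLinC(G))$; next, by Theorem~\ref{AMEAG} and Theorem~\ref{redliexpf}(A), it identifies $\cO(G/\wtLinC(G))^\bullet\cong\cA_{exp}(G)$, and concludes that $\ka$ is an isomorphism. You instead treat the hypothesis as a black box through a general biduality lemma --- if an NFAM Hopf algebra $H$ is holomorphically reflexive, then so is $H^\bullet$ --- proved via the triangle identity $\ka_{H^\bullet}\circ(\ka_H)^\bullet=\id_{H^\bullet}$, and you apply it to $H=\cO(G/\wtLinC(G))$ after the reduction $\cA_{exp}(G)\cong\cA_{exp}(G/\wtLinC(G))$ and the identification $\cO(G/\wtLinC(G))^\bullet\cong\cA_{exp}(G/\wtLinC(G))$; notably you never invoke Proposition~\ref{AMeqHR}, which is the engine of the paper's argument. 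Your diagram chase does close: writing $\iota_E\colon E\to E''$ for the canonical map, the needed ingredients are naturality of the envelope maps $\alpha$, naturality of $\iota$, the classical identity $(\iota_E)'\circ\iota_{E'}=\id_{E'}$ on well-behaved spaces, and density of the range of $\alpha_{H'}$ --- exactly the facts you list, and they suffice. What your route buys is a reusable abstract statement (reflexivity passes to holomorphic duals) and an explicit treatment of the one delicate point the paper glosses over, namely that the chain of isomorphisms $\cA_{exp}(G)^{\bullet\bullet}\cong\cA_{exp}(G)$ really computes the canonical map $\ka$ of \eqref{defioH} rather than merely producing an abstract isomorphism. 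What the paper's route buys is brevity (Proposition~\ref{AMeqHR} being already available) and the explicit intermediate identification $\cA_{exp}(G)^\bullet\cong\cO(G/\wtLinC(G))$, which is of independent interest.
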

\begin{proof}
We have to show that the map
$\ka\!:\cA_{exp}(G)^{\bullet\bullet}\to \cA_{exp}(G)$ defined in~\eqref{defioH} is an isomorphism.

Put $\wt L\!:=\wtLinC(G)$.
Since $\cA_{exp}(G)'\cong \cO_{exp}(G)$, it follows from the definition of the holomorphic dual that $\cA_{exp}(G)^\bullet\cong \wh\cO_{exp}(G)$. Moreover,
$\cO_{exp}(G/\wt L)\cong \cO_{exp}(G) $  by Theorem~\ref{redliexpf}. The assumption that $\cO(G/\wt L)$ is holomorphically reflexive and Proposition~\ref{AMeqHR}
imply that $\wh\cO_{exp}(G)\cong
\cO(G/\wt L)$. Hence $\cA_{exp}(G)^\bullet\cong\cO(G/\wt L)$.

On the other hand, $\cO(G/\wt L)'\cong\cA(G/\wt L)$ and,
by Akbarov's result (Theorem~\ref{AMEAG}), $\cA(G/\wt L)\to \cA_{exp}(G/\wt L)$ is an Arens-Michael envelope.
It follows from
Theorem~\ref{redliexpf} that the naturally defined map $ \cA_{exp}(G)\to \cA_{exp}(G/\wt L)$
is an isomorphism. So
$\cO(G/\wt L)^\bullet\cong \cA_{exp}(G)$ and therefore $\ka$ is an isomorphism.
\end{proof}

\begin{co}\label{AeGhr}
If $G$ is a  complex Lie group with finitely many
components, then $\cA_{exp}(G)$ is holomorphically reflexive.
\end{co}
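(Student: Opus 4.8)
The plan is to reduce the claim to the holomorphic reflexivity of $\cO$ on a suitable linear quotient and then invoke Theorem~\ref{AeGhr-1}. First I would observe that since $G$ has finitely many components it is automatically compactly generated, so Theorem~\ref{AeGhr-1} is applicable to~$G$; it tells us that it suffices to prove that $\cO(G/\wtLinC(G))$ is holomorphically reflexive. Set $L\!:=\wtLinC(G)$. By Theorem~\ref{Linfcc} we have $L=\LinC(G)=\LinC(G_0)$, so $L$ coincides with the ordinary linearizer and we may work with the quotient $G/L$.

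Next I would verify that $G/L$ meets all the hypotheses of Theorem~\ref{OGhrcon1}. Since $G$ has finitely many components, so does $G/L$, and $G/L$ is linear --- this is the same observation already used in the proof of Theorem~\ref{OGhrcon}(2). Being linear, $G/L$ is in particular a Stein group (as noted after Theorem~\ref{OGhrcon1}, every linear group is Stein). Moreover, linearity of $G/L$ forces $\LinC(G/L)=\{1\}$, and a second application of Theorem~\ref{Linfcc}, now to the group $G/L$, yields $\wtLinC(G/L)=\LinC(G/L)=\{1\}$. Thus condition~(iii) of Theorem~\ref{OGhrcon1} holds for $G/L$, and the implication $\mathrm{(iii)}\Rightarrow\mathrm{(i)}$ shows that $\cO(G/L)$ is holomorphically reflexive.

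Finally, since $G/L=G/\wtLinC(G)$, the holomorphic reflexivity of $\cO(G/\wtLinC(G))$ feeds directly into Theorem~\ref{AeGhr-1}, which delivers the holomorphic reflexivity of $\cA_{exp}(G)$ and completes the argument.

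I do not expect any serious obstacle: the corollary is essentially a bookkeeping exercise assembling Theorems~\ref{Linfcc}, \ref{OGhrcon1}, and~\ref{AeGhr-1}. The only point requiring mild care is confirming that $G/\wtLinC(G)$ is a Stein group with trivial (generalized) linearizer, and both of these follow from the linearity of $G/\LinC(G)$ in the finitely-many-components setting together with the equality $\wtLinC=\LinC$ supplied by Theorem~\ref{Linfcc}. Notably, in contrast with Theorem~\ref{OGhrcon1}, no linearity assumption on~$G$ itself is needed, because the reflexivity of $\cA_{exp}(G)$ only sees the linear quotient $G/\wtLinC(G)$.
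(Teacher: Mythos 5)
Your proof is correct, and it shares the paper's overall skeleton --- both arguments reduce, via Theorem~\ref{AeGhr-1}, to showing that $\cO(G/\wtLinC(G))$ is holomorphically reflexive --- but the middle step is handled differently. The paper stays at the level of algebras: by Theorems~\ref{Linfcc} and~\ref{OGhrcon}(A), the map $j\wt\pi^{-1}\!:\cO_{exp}(G)\to\cO(G/\wtLinC(G))$ is an Arens-Michael envelope, and Theorem~\ref{redliexpf} (which identifies $\cO_{exp}(G)$ with $\cO_{exp}(G/\wtLinC(G))$) then gives the reflexivity of $\cO(G/\wtLinC(G))$ via Proposition~\ref{AMeqHR}. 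You instead work at the level of groups: writing $L=\wtLinC(G)=\LinC(G)$ (Theorem~\ref{Linfcc}), you check that the quotient $G/L$ is a Stein group with finitely many components whose generalized linearizer is trivial (linearity of $G/L$ gives $\LinC(G/L)=\{1\}$, and a second application of Theorem~\ref{Linfcc} to $G/L$ gives $\wtLinC(G/L)=\{1\}$), and then you invoke the criterion $\mathrm{(iii)}\Longrightarrow\mathrm{(i)}$ of Theorem~\ref{OGhrcon1} for $G/L$. The two routes are materially equivalent, since that implication in Theorem~\ref{OGhrcon1} is itself deduced from Theorem~\ref{OGhrcon}(B) and Proposition~\ref{AMeqHR}; what your version costs is the extra (easy) verifications about $G/L$ --- linearity, hence Stein-ness, and triviality of $\wtLinC(G/L)$ --- which the paper's route has already absorbed into the statement of Theorem~\ref{OGhrcon}(A). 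What it buys is a cleaner conceptual picture: it makes explicit that the reflexivity of $\cA_{exp}(G)$ depends only on the linear quotient $G/\wtLinC(G)$, with no linearity hypothesis on $G$ itself, exactly as you note at the end.
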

\begin{proof}
If follows from Theorems~\ref{Linfcc} and~\ref{OGhrcon} that $j\wt\pi^{-1}\!:\cO_{exp}(G)\to \cO(G/\wtLinC(G))$ is an Arens-Michael envelope.  So, applying Theorem~\ref{redliexpf} we get  that $\cO(G/\wtLinC(G))$ is holomorphically reflexive. Since $G$ is compactly generated, Theorem~\ref{AeGhr-1} implies that $\cA_{exp}(G)$ is also holomorphically reflexive.
\end{proof}

It is possible that the  assumption of finiteness of the set of components can be removed (see Conjecture~\ref{mainconj2}).

\section{Examples}
\label{sec:exmp}

In this section we consider examples of  compactly generated complex Lie groups with infinitely many components. First, we make some additional work to show that the group~$G$ considered in Example~\ref{countex}  gives a counterexample to Akbarov's conjecture: the component of the identity of~$G$ is linear but $\cO(G)$ is not holomorphically reflexive. Secondly,  we exhibit  a family of examples of~$G$  with infinitely many components such that $\cO(G)$ is holomorphically reflexive.

Recall that a closed subgroup~$H$ of a compactly generated locally compact group~$G$ is said to be \emph{distorted} if $\ell_H\not\lesssim\ell_G$ for any word length functions $\ell_H$ and $\ell_G$ on $H$ and $G$, respectively.

\begin{pr}\label{Ctiescons}
Let $G$ be a compactly generated complex Lie group.
If $H$ is a distorted closed subgroup of $G$ and $H$ is isomorphic to~$\CC^\times$, then each function from $\cO_{exp}(G)$ is constant on~$H$.
\end{pr}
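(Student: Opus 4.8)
The plan is to fix an arbitrary $f\in\cO_{exp}(G)$, identify $H$ with $\CC^\times$ and write $h_z$ for the element corresponding to $z$, and study the restriction $f|_H$, which is holomorphic on $\CC^\times$ and hence has a Laurent expansion $f|_H(z)=\sum_{k\in\Z}a_kz^k$. Since $G$ is compactly generated, \eqref{exptwlf} gives $K,C>0$ with $|f(g)|\le Ke^{C\ell(g)}$ for all $g\in G$, where $\ell$ is a word length function on $G$. First I would record two structural facts about word lengths on $H$, using symmetric relatively compact generating sets and writing $\ell_H$ for the word length of $H$: because a relatively compact generating set of $H$ sits inside some power of the one for $G$, one has $\ell\le m\,\ell_H$ on $H$; because $\{|z|=1\}\cong\mathbb{T}$ is compact, both $\ell_H(z)$ and $\ell(z)$ differ from $\ell_H(|z|)$ and $\ell(|z|)$ by bounded amounts, and with symmetric generators both are invariant under $z\mapsto z^{-1}$. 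In particular $\ell_H(z)\asymp 1+|\log|z||$.

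Combining $|f|_H(z)|\le Ke^{C\ell(z)}\le Ke^{Cm\,\ell_H(z)}$ with $\ell_H(z)\asymp 1+|\log|z||$ shows $f|_H$ grows at most polynomially as $|z|\to 0$ and $|z|\to\infty$, so $f|_H$ is a Laurent \emph{polynomial}: only finitely many $a_k$ are nonzero. Suppose, for contradiction, that $f|_H$ is not constant; then some $a_{k_0}\ne 0$ with $k_0\ne 0$. Up to replacing $z$ by $z^{-1}$ (the inversion of $H$, under which both word lengths are invariant by the choice of symmetric generators, so that the growth bound and the distortion hypothesis are unaffected), I may assume the top Laurent degree $N$ is strictly positive, $a_N\ne0$. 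Restricting to positive reals $z=t>0$, one has $|f|_H(t)|\sim|a_N|\,t^N$ as $t\to+\infty$, so $\tfrac12|a_N|\,t^N\le Ke^{C\ell(h_t)}$ for large $t$. Taking logarithms yields $\log t\lesssim\ell(h_t)$, and since $\ell_H(h_t)\asymp 1+\log t$ for $t\ge1$ this gives $\ell_H\le C'\ell+D'$ for all positive real $z=t\ge1$.

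It then remains to promote this one-parameter estimate to all of $H$, which is the only delicate point. Using that $\ell_H(z)$ and $\ell(z)$ depend on $|z|$ up to bounded error, the inequality extends from positive reals to all $z$ with $|z|\ge1$; and the inversion symmetry $z\mapsto z^{-1}$ (which sends $|z|\ge1$ to $|z|\le1$ and preserves both word lengths up to constants) extends it to $|z|\le1$ as well. Hence $\ell_H\le C''\ell+D''$ on all of $H$, that is $\ell_H\lesssim\ell$, contradicting the hypothesis that $H$ is distorted. Therefore $f|_H$ is constant, as claimed. The main obstacle is exactly this final propagation: the exponential-type bound only directly controls growth along the positive real axis, where the leading Laurent term genuinely dominates, and the argument hinges on the observation that compactness of the circle direction together with inversion symmetry makes $\ell_H$ and $\ell|_H$ essentially functions of $|z|$ alone, so that the single-axis estimate already forces global undistortion.
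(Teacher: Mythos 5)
Your proposal is correct, but it is organized genuinely differently from the paper's proof. Both arguments study the Laurent expansion of $f$ restricted to $H\cong\CC^\times$ and play its growth off against word length, but the paper argues \emph{directly}: from distortion it extracts a sequence $(z_n)$ with $\max\{|z_n|,|z_n|^{-1}\}>\om^n\mu(z_n)$ (where $\om=e^{\ell}$ is the word weight, normalized so that $\mu(\mathbb{T})$ lies in the generating set, giving the exact rotation invariance $\om\mu(z)=\om\mu(|z|)$), shows after passing to a subsequence that $z_n\to 0$ or $\infty$ (a finite nonzero limit is excluded using integrality of the word length), and then kills every coefficient $c_m$, $m\ne 0$, of the full Laurent \emph{series} by Cauchy's inequality along the radii $R_n=|z_n|$. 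You argue contrapositively and in two stages: first, the always-valid comparison $\ell|_H\le m\,\ell_H$ together with $\ell_H(z)\asymp 1+\bigl|\log|z|\bigr|$ shows that the restriction of \emph{any} exponential-type function to \emph{any} closed subgroup isomorphic to $\CC^\times$, distorted or not, is a Laurent \emph{polynomial} --- a clean intermediate fact the paper never isolates; second, a nonzero top coefficient forces the lower bound $t^N\lesssim e^{C\ell(h_t)}$ along positive reals, hence $\ell_H\lesssim\ell|_H$ after propagating by rotation (bounded error, by compactness of $\mathbb{T}$) and inversion (symmetric generators), i.e., $H$ is undistorted, contradicting the hypothesis. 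Your route buys a cleaner logical structure: no subsequence extraction, no case analysis on $\lim z_n$, and distortion enters exactly once, to rule out a nonconstant leading term; the paper's route works coefficient by coefficient and never needs the Laurent-polynomial reduction or a lower bound on $|f|$. Both rest on the same normalizations (symmetric relatively compact generating sets; independence of distortion from the choice of word length functions), which you state and use correctly, including at the one delicate point --- promoting the estimate from the positive real axis to all of $H$ --- which you rightly single out and resolve.
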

\begin{proof}
Let $f\in\cO_{exp}(G)$.
Consider the holomorphic homomorphism $\mu\!:\CC^\times\to G$ that implements the isomorphism $\CC^\times\to H$.  Since  $f\mu$  is holomorphic on $\CC^\times$, we can write the Laurent series: $f\mu(z)=\sum_{m\in\Z}c_m\,z^m$.
To show that $f$ is constant on~$H$ it suffices to prove that $c_m=0$ for each $m\ne 0$.

Let $U$ be a compact generating subset of~$G$ and $\om$ the corresponding word submultiplicative weight, i.e., the exponent of the word length function. We can assume that
$\mu(\mathbb{T})$ (where $\mathbb{T}=\{z\!:\,|z|=1\}$) is contained in~$U$. Then
$\om\mu(\la)=1$ for any $\la\in \mathbb{T}\setminus \{1\}$ and, by the submultiplicativity,
\begin{equation}\label{ommumodz}
 \om\mu(z)= \om\mu(|z|)\qquad \text{for each $z\in \CC$.}
\end{equation}

Since $f$ is of exponential type and $\om$ is a word weight, there are $C>0$ and $k\in\N$ such that
$$
|f(g)|\le C\,\om^k(g)\qquad \text{for each $g\in G$.}
$$
Write $M_R\!:=\max\{|f\mu(z)|\!:\,|z|=R\}$. Then, by~\eqref{ommumodz}, we have for every $R>0$ that
\begin{equation}\label{MromR}
M_R\le \max\{C\,\om^k\mu(z)\!:\,|z|=R\}= C \om^k\mu(R)\,.
\end{equation}

Note that $z\mapsto \max\{|z|,\,|z|^{-1}\}$ is a submultiplicative weight on~$\CC^\times$ such that the corresponding length function is equivalent to a word length function. Since $H$ is distorted in~$G$,
there is a sequence $(z_n)$ in~$\CC^\times$ such that
\begin{equation}\label{omnmuzn}
 \max\{|z_n|,\,|z_n|^{-1}\}>\om^n\mu(z_n)\qquad \text{for each $n\in\N$.}
\end{equation}
Passing to a subsequence if necessary, we can assume that $(z_n)$ has a limit, finite or infinite, and~\eqref{omnmuzn} is still satisfied.

We claim first that $\lim_{n\to\infty} z_n$ does not belong to~$\CC^\times$. Indeed, assume the opposite. Then there is $K>0$ such that $\max\{|z_{n}|,\,|z_{n}|^{-1}\}\le K$ and therefore, by~\eqref{omnmuzn}, $\om\mu(z_{n})< K^{1/n}$ for every~$n$. Since  $\om$ takes values in $\{e^p\!:\,p\in\Z_+\}$, we have that $\om\mu(z_{n})=1$ eventually. By the definition of a word weight, $\om(g)=1$ if only if $g=1$. Since $\mu$ is injective, $z_{n}=1$ eventually. Applying again \eqref{omnmuzn} we have that $\om(1)<1$ and get a contradiction.

Thus only two cases may occur: $\lim z_n=\infty$ and $\lim z_n=0$.  Suppose that $\lim z_n=\infty$ and write $R_n\!:=|z_n|$. Applying consequently~\eqref{MromR} and~\eqref{omnmuzn} we get
$$
M_{R_n}\le  C \om^k\mu(R_n)< C \max\{R_n^{k/n},\,R_n^{-k/n}\} \,.
$$

Now consider the coefficient $c_m$ of the Laurent series
for $m\ne 0$. If $n\ge k(|m|-1/2)^{-1}$, then $|m|-k/n\ge 1/2$ and hence $R^{k/n-|m|}\le R^{-1/2}$ when $R\ge 1$.
Since $R_n\ge 1$ eventually,
$$
\frac{M_{R_n}}{R_n^{|m|}} \le C\,R_n^{k/n-|m|}\le C\,R_n^{-1/2}
$$
also eventually. Finally, we have
$\lim_{n\to \infty} M_{R_n}/R_n^{|m|}=0$ since $\lim R_n=\infty$.
Cauchy's inequality for Laurent series asserts that
$|c_m|\le M_R/R^m$ for any $R>0$ and $m\in\Z$, therefore, putting $R=R_n$ for $m>0$ and $R=R_n^{-1}$ for $m<0$, we have $c_m=0$.

The proof in the case when $\lim z_n=0$ is similar.
\end{proof}

\begin{co}\label{Ctimescoin}
If~$G$ is the group in Example~\ref{countex}, then its component of the identity is linear but $\cO(G)$ is not holomorphically reflexive. Nevertheless, $\cA_{exp}(G)$ is holomorphically reflexive.
\end{co}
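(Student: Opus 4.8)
The plan is to establish the three assertions separately, using the distortion criterion of Proposition~\ref{Ctiescons} for the failure of reflexivity of $\cO(G)$ and the reduction of Theorem~\ref{AeGhr-1} for the reflexivity of $\cA_{exp}(G)$. The identity component is $G_0=\{(z,0,0):z\in\CC^\times\}$, which is isomorphic to $\CC^\times$ via $z\mapsto(z,0,0)$ and hence linear; this gives the first claim at once.

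The crucial step is to show that $G_0$ is distorted in $G$. Setting $a=(1,1,0)$ and $b=(1,0,1)$, I would compute $a^k=(1,k,0)$, $b^k=(1,0,k)$, and then $a^kb^k=(1,k,k)$ while $b^ka^k=(e^{k^2},k,k)$, so that the commutator word $b^{-k}a^{-k}b^ka^k$ equals $(e^{k^2},0,0)$. Thus $(e^{k^2},0,0)$ lies in $G_0$ and has word length at most $4k$ in $G$, whereas in $G_0\cong\CC^\times$ the same element has word length of order $k^2$; hence $\ell_{G_0}\not\lesssim\ell_G$ and $G_0$ is distorted. Proposition~\ref{Ctiescons}, applied with $H=G_0$, then shows that every function in $\cO_{exp}(G)$ is constant on $G_0$. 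Since the underlying manifold $\CC^\times\times\Z^2$ is Stein, $\cO(G)$ separates points and therefore contains functions that are nonconstant on $G_0$; as constancy on $G_0$ is preserved under uniform convergence on compact subsets of $G_0$, the subspace $\cO_{exp}(G)$ cannot be dense in $\cO(G)$. In particular $\cO_{exp}(G)\to\cO(G)$ is not an Arens-Michael envelope, so by Proposition~\ref{AMeqHR} the algebra $\cO(G)$ is not holomorphically reflexive.

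For the last assertion I would first identify $\wtLinC(G)$. The constancy just obtained gives $G_0\subset\wtLinC(G)$ by Corollary~\ref{coredliexpf}, while Lemma~\ref{lemmcenter} forces $\wtLinC(G)\subset G_0$; hence $\wtLinC(G)=G_0$ and $G/\wtLinC(G)\cong\Z^2$. The quotient $\Z^2$ is a finitely generated discrete group, so Proposition~\ref{expdis} shows that $\cO_{exp}(\Z^2)\to\cO(\Z^2)$ is an Arens-Michael envelope and Proposition~\ref{AMeqHR} then gives that $\cO(\Z^2)$ is holomorphically reflexive. Since $G$ is compactly generated, Theorem~\ref{AeGhr-1} yields that $\cA_{exp}(G)$ is holomorphically reflexive.

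The main obstacle is the distortion estimate: verifying the commutator identity $b^{-k}a^{-k}b^ka^k=(e^{k^2},0,0)$ and the resulting linear word-length bound on the central elements $(e^{k^2},0,0)$, contrasted with the quadratic growth of their length inside $\CC^\times$, is where the genuine content lies. Once distortion is in hand, everything else reduces to assembling the structural results already established.
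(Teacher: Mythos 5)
Your proposal is correct and follows essentially the same route as the paper: Proposition~\ref{Ctiescons} plus Proposition~\ref{AMeqHR} (via the density/separation argument of Lemma~\ref{nesswtL1}) for the failure of reflexivity of $\cO(G)$, and Corollary~\ref{coredliexpf}, Lemma~\ref{lemmcenter}, Proposition~\ref{expdis} and Theorem~\ref{AeGhr-1} for the reflexivity of $\cA_{exp}(G)$. The only difference is that you verify the quadratic distortion of $G_0\cong\CC^\times$ explicitly via the commutator identity $b^{-k}a^{-k}b^ka^k=(e^{k^2},0,0)$ (which checks out against the group law), whereas the paper merely asserts this distortion, citing the analogous Heisenberg-quotient argument.
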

\begin{proof}
The component~$G_0$  of the identity is isomorphic to~$\CC^\times$ and distorted in $G$ (in fact, it is of strictly quadratic distortion). By Proposition~\ref{Ctiescons}, each holomorphic function of exponential type on $G$ is constant on~$G_0$.

Suppose to the contrary that $\cO(G)$ is holomorphically reflexive. Since~$G$ is compactly generated, then, by Proposition~\ref{AMeqHR}, the embedding $\cO_{exp}(G)\to \cO(G)$ is an Arens-Michael
envelope. In particular, $\cO_{exp}(G)$ is dense in~$\cO(G)$.  Lemma~\ref{nesswtL1} implies that $G_0$ is trivial and we get a contradiction.

Since each function from $\cO_{exp}(G)$ is constant on~$G_0$,
it follows from Corollary~\ref{coredliexpf} that $G_0\subset \wtLinC(G)$. So, by Lemma~\ref{lemmcenter}, $G_0= \wtLinC(G)$. Part~(B) of Theorem~\ref{redliexpf} implies that $\cO_{exp}(G)\cong \cO_{exp}(\Ga)$, where $\Ga=G/G_0$.
Since $\Ga$ is discrete and finitely generated, $\cO_{exp}(\Ga)\cong \cO(\Ga)$ by Proposition~\ref{expdis} and so $\cO(\Ga)$ is  holomorphically reflexive. Finally,
Theorem~\ref{AeGhr-1} implies that $\cA_{exp}(G)$ is holomorphically reflexive.
\end{proof}

\begin{pr}\label{Pnilsdpr}
Suppose that a complex Lie group~$G$ is a semidirect product
$G=N\rtimes H$, where~$H $ is a compactly generated closed subgroup and~$N$ is a
closed normal subgroup that is simply connected and nilpotent.
Let~$\ell$ be a word length function on~$G$, let~$\fn$  denote the
Lie algebra of~$N$, and let~$\exp$ be the exponential map on~$\fn$. Given
a norm $\|\cdot\|$ on~$\fn$, there are $C,D>0$ such that
\begin{equation*}
\log (1+\|\eta\|)\lesssim \ell(\exp \eta)\qquad\text{for $\eta\in \fn$.}
\end{equation*}
\end{pr}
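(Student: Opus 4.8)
The plan is to bound $\|\eta\|$ from above by an exponential function of $\ell(\exp\eta)$, so that taking logarithms yields the asserted estimate. Since any two word length functions on a compactly generated group are $\lesssim$-equivalent, I may fix a compact symmetric generating set $U$ of $G$ and take $\ell$ to be the associated word length (such a $U$ exists because $N$ is connected and $H$ is compactly generated). Working in the coordinates $g = (\nu, h) \in N \times H$ of the semidirect product, I write an element $\exp\eta$ of $N$ with $\ell(\exp\eta) = n$ as a product $g_1 \cdots g_n$ with $g_i = (\nu_i, h_i) \in U$. Projecting to $H = G/N$ forces $h_1 \cdots h_n = 1$, and the semidirect multiplication law expresses the $N$-component as
\[
\exp\eta = \prod_{k=1}^n \alpha_{w_{k-1}}(\nu_k), \qquad w_{k-1} := h_1 \cdots h_{k-1},
\]
where $\alpha_h$ is the conjugation automorphism of $N$ and $w_0 = 1$.

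The two ingredients are an \emph{exponential} bound on the size of each factor and a \emph{polynomial} bound on the size of the product. For the first, the differentiated action $h \mapsto d\alpha_h$ is a continuous homomorphism $H \to \GL(\fn)$, so on the compact image $p(U)$ of $U$ in $H$ the operator norms $\|d\alpha_h\|$ are bounded by some $L \ge 1$; since $w_{k-1}$ is a product of at most $n$ elements of $p(U)$, this gives $\|d\alpha_{w_{k-1}}\| \le L^{k-1} \le L^n$. As the $\nu_i$ range over a compact subset of $N$, we have $\|\log \nu_i\| \le B$ for some $B$, and hence the logarithms $\zeta_k := \log \alpha_{w_{k-1}}(\nu_k) = d\alpha_{w_{k-1}}(\log \nu_k)$ satisfy $\|\zeta_k\| \le M := B L^n$, which is exponential in $n$.

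For the second ingredient I pass to logarithmic coordinates, where the group law on $N$ becomes the Baker--Campbell--Hausdorff product $*$ and $\eta = \zeta_1 * \cdots * \zeta_n$. Because $N$ is nilpotent of some class $c$, this iterated product is a Lie polynomial of degree $\le c$ in the $\zeta_k$: its homogeneous part of degree $\ell$ is a sum over tuples $(i_1, \dots, i_\ell) \in \{1, \dots, n\}^\ell$ of fixed iterated brackets of $\zeta_{i_1}, \dots, \zeta_{i_\ell}$, with coefficients depending only on the order pattern of the indices, hence bounded independently of $n$. Thus the degree-$\ell$ part has norm $\le C_\ell\, n^\ell (\max_k \|\zeta_k\|)^\ell$, and summing over $1 \le \ell \le c$ gives $\|\eta\| \le C (nM)^c$, polynomial in $M$. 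Substituting $M = B L^n$ produces $\|\eta\| \le C (n B L^n)^c$, so $\log(1 + \|\eta\|) \le C' n + D' = C' \ell(\exp\eta) + D'$, which is exactly $\log(1 + \|\eta\|) \lesssim \ell(\exp\eta)$.

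The hard part is the polynomial-in-$M$ control of the product: a step-by-step induction on the number of factors (estimating $P_{j+1} = P_j * \zeta_{j+1}$ from $P_j$) produces a bound that is doubly exponential in $n$ and therefore useless here. What rescues the argument is nilpotency, which truncates the BCH series at bracket-length $c$ and lets one organize the terms by the (at most $c$) indices they involve; this keeps the degree-$\ell$ contribution at the order $n^\ell M^\ell$ and turns the product of $n$ exponentially large vectors into a quantity that is still only exponential in $n$.
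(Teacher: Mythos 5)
Your proof is correct, and its first half is identical to the paper's: the paper also takes the generating set $\{\exp \eta :\, \|\eta\|\le 1\}\times K$ with $K$ a compact generating set of $H$, writes an element $n\in N$ of word length $m$ as $(\exp \eta_1)\,\alpha_{h_1}(\exp \eta_2)\cdots\alpha_{h_1\cdots h_{m-1}}(\exp \eta_m)$, and bounds the logarithms of the conjugated factors by $B^{j}$ via submultiplicativity of the differentiated action, exactly as you do with $L^{k-1}\le L^n$. The divergence is in the assembly step, and there your route is genuinely different. The paper never expands a BCH series: it introduces a word length function $\ell_N$ on $N$ itself, uses its subadditivity to get $\ell_N(n)\le\sum_j \ell_N\bigl(\alpha_{h_1\cdots h_j}(\exp \eta_{j+1})\bigr)$, and then quotes, as a black box, the two-sided polynomial distortion estimate for simply connected nilpotent Lie groups \cite[Theorem~3.1]{ArAMN}: the inequality $\ell_N(\exp \eta)\le A(1+\|\eta\|)^k$ bounds each summand by $A(1+B^j)^k$, so $\ell_N(n)\lesssim B^{mk}$ is merely exponential in $m$, and the reverse inequality $\|\eta\|\le A'(1+\ell_N(\exp \eta))^{k'}$ converts this into $\log(1+\|\eta\|)\lesssim\log\ell_N(n)\lesssim m$. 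You instead stay in logarithmic coordinates throughout and prove the needed polynomial control by hand, using nilpotency to truncate the multi-factor BCH product at bracket length $c$ and the fact that the coefficient of a degree-$\ell$ monomial depends only on the order pattern of its (at most $c$ distinct) indices, hence is bounded independently of $n$; this gives $\|\eta\|\le C(nM)^c$ with $M=BL^n$ and the same conclusion. Your version buys self-containedness (no appeal to the distortion theorem, whose proof is itself of BCH type), and you correctly isolate the crux: a naive factor-by-factor induction is doubly exponential and useless. The paper's version buys brevity, since subadditivity of $\ell_N$ is free and all the hard polynomial estimates are hidden in the cited theorem. The only thin spot in your writeup is the uniform boundedness of the BCH coefficients: it is true, and your reason is the right one, but a final version should include the one-line justification that setting the unused variables to zero reduces the coefficient of any given monomial to its coefficient in the BCH expansion of at most $c$ factors, which is a universal finite expression.
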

\begin{proof}
Let $K$ be a compact generating subset of~$H$. Then $$S\!:=\{\exp \eta\!:\,\|\eta\|\le 1\}\times K$$ is  compact and generates~$G$.
We can assume that $\ell$ is the word length function associated with~$S$.

Denote by $\al_h$ the action of $h\in H$ on $ N$ and by $\mathbf{L}(\al_h)$ the corresponding automorphism of~$\fn$. Since~$K$ is compact, there is $B\ge0$ such that $\|\mathbf{L}(\al_h)\|\le B$ for all $h\in K$. We assume that $B>1$ for technical reasons.

Denote by $\ell_N$ a word length function on~$N$. It is a standard result of the geometric theory of nilpotent Lie groups (see, e.g., \cite[Theorem~3.1]{ArAMN}) that
there are $A,A',k,k'\ge 1$ such that
\begin{equation}\label{AAprkk}
\|\eta\|\le A'(1+\ell_N(\exp\eta))^{k'}\quad\text{and}\quad \ell_N(\exp\eta)\le A(1+\|\eta\|)^k
\end{equation}
for every $\eta\in\fn$. The first inequality and $A'\ge 1$ imply that
$$
\log (1+\|\exp^{-1}(n)\|)\lesssim \log\ell_N(n)\,
$$
for $n\in N\setminus\{1\}$. So it suffices to show that $\log\ell_N(n)\lesssim \ell(n)$.

Since $N$ is simply connected and nilpotent, the exponential map is bijective.  Since the exponential map is natural, we have
\begin{equation}\label{expnat}
\exp^{-1}\circ\,\al_h\circ\exp(\eta)=\mathbf{L}(\al_h)(\eta)\qquad
\text{for each $\eta\in\fn$ and $h\in H$.}
\end{equation}

Suppose that $n\in N$ with $\ell(n)=m$, where $m>0$, and fix a representation $n=g_1\cdots g_m$, where $g_1,\ldots, g_m\in S$, i.e., $g_j=(\exp \eta_j,\,h_j)$ for some $h_j\in K$ and $\eta_j\in\fn$ with $\|\eta_j\|\le 1$.
Then
$$
n=(\exp \eta_1)\,\al_{h_1}(\exp \eta_2)\cdots\al_{h_1\cdots h_{m-1}}(\exp \eta_m)\,.
$$

Put $h_0\!:=1$. Then $\|\mathbf{L}(\al_{h_0\cdots h_{j}}(\eta_j))\|\le B^j$ for any $j\in\{0,\ldots,m-1\}$.
Applying \eqref{expnat} and the second inequality in~\eqref{AAprkk} we get
\begin{multline*}
 \ell_N(n)\le \sum_{j=0}^{m-1}\ell_N(\al_{h_0\cdots h_{j}}(\exp \eta_j))\le\\
 \sum_{j=0}^{m-1} A(1+\|\mathbf{L}(\al_{h_0\cdots h_{j}}(\eta_j))\|)^k\le
 A\,\sum_{j=0}^{m-1} (1+B^j)^k\le\\
 A\,2^k\,\sum_{j=0}^{m-1} B^{jk}\le
 \frac{A\,2^k\, B^{mk}}{B^k-1}\,.
\end{multline*}
Since $m=\ell(n)$, we have $\log\ell_N(n)\lesssim \ell(n)$.
\end{proof}

\begin{exm}\label{infcomphr}
Let $N$ be a complex Lie group that is simply connected and
nilpotent and let~$\Ga$ be a finitely generated subgroup of the
automorphism group of~$N$. When~$\Ga$ is endowed with the discrete
topology, $G\!:=N\rtimes \Ga$ is a compactly generated complex Lie
group and~$N$ is the component of the identity. When~$\Ga$ is infinite, $G$ has infinitely many components.

We claim that $\cO(G)$ is
holomorphically reflexive.
Indeed, by Corollary~\ref{expclin-1},  it suffices to show that the
homomorphism $\psi_0\!:\cO_{exp}(G)\to \cO_{exp}(N)$ induced by
the embedding $N\to G$ has dense range.
Denote $\fn$ by the Lie algebra of~$N$. Since~$N$ is simply
connected and nilpotent, the exponential map $\exp\!:\fn\to N$ is
a biholomorphic equivalence. Let $\mu(f)$ denote the extension
of~$f\in \cO(N)$ such that $\mu(f)(g)=0$ when $g\notin N$. If  $\|\cdot\|$ is a
norm on~$\fn$, then for any polynomial~$p$ on~$\fn$ there
is $C>0$ and $k\in\Z_+$ such that $|p(\eta)|\le C(1+\|\eta\|)^k$,
and Proposition~\ref{Pnilsdpr} implies that $\mu(p\circ\exp^{-1})\in \cO_{exp}(G)$.

The algebra of holomorphic functions of exponential type on a simply connected  nilpotent complex Lie group is described explicitly in \cite[Theorem~5.7]{ArAnF}.
It follows from this description  that the set of functions of the form
$p\circ\exp^{-1}$, where~$p$ is a polynomial on~$\fn$, is contained and dense in
$\cO_{exp}(N)$.  Then the equality $\psi_0\mu(p\circ\exp^{-1})=p\circ\exp^{-1}$ implies that $\psi_0$ has
dense range.
\end{exm}

\section{Appendix. The Arens-Michael envelope of a nuclear $\ptn$-algebra}

In this section we show that the functor $H\mapsto H^\bullet$ preserves nuclearity when $H$ is an NFAM Hopf algebra.
To do this we need the construction of the analytic tensor algebra of a complete locally convex space considered in \cite{Pir_qfree}. (In fact, we need a partial case because in [ibid.]  the case of a $\ptn$-bimodule is discussed.)

Let $X$ be a complete locally convex space. Then there exists an Arens-Michael algebra $\wh T(X)$ (the analytic tensor algebra of~$X$) and a continuous linear map $j_X\!:X\to \wh T(X)$ satisfying the following universal property:
for any Arens–Michael algebra~$B$ and any continuous linear map $\al\!: X \to B$
there exists a unique continuous homomorphism $\psi\!: \wh T(X) \to B$ making the diagram
\begin{equation*}
  \xymatrix{
X \ar[r]^{j_X}\ar[rd]_\al&\wh T(X)\ar@{-->}[d]^{\psi}\\
 &B\\
 }
\end{equation*}
commutative \cite[Proposition~4.8]{Pir_qfree}.

The proof of following proposition is straightforward (cf. \cite[Proposition~4.2]{Pi15}).

\begin{pr}\label{AMtensa}
Let $A$ be a $\ptn$-algebra and let $\wt A$ be the
completion of the quotient of $\wh T(A)$  by the two-sided closed ideal $J$ generated by  $1_{\wh T}-1_A$
and  all elements of the form $a_1\otimes a_2-a_1a_2$ ($a_1,a_2\in A$). Then the composition of $j_A\!: A\to \wh T(A)$ with the natural map $\wh T(A)\to\wt A $ is an Arens-Michael envelope of~$A$.
\end{pr}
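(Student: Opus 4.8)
The plan is to verify directly that the displayed composition $\iota\colon A\to\wt A$ satisfies the universal property characterising the Arens--Michael envelope; since that property determines the envelope up to a unique topological isomorphism, establishing it is enough.

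First I would record that $\wt A$ is genuinely an Arens--Michael algebra. The algebra $\wh T(A)$ is Arens--Michael by construction, the quotient $\wh T(A)/J$ by the closed two-sided ideal $J$ is a locally convex algebra whose topology is again generated by submultiplicative prenorms, and the completion of such an algebra is Arens--Michael. Next I would check that $\iota$ is a continuous unital homomorphism of $\ptn$-algebras: continuity and linearity are inherited from $j_A$ and the quotient map; the relation $1_{\wh T}-1_A\in J$ forces $\iota$ to be unital; and the relations $a_1\otimes a_2-a_1a_2\in J$ force $\iota(a_1)\iota(a_2)=\iota(a_1a_2)$, so $\iota$ is multiplicative.

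The heart of the argument is the universal property. Let $B$ be an Arens--Michael algebra and $\phi\colon A\to B$ a continuous homomorphism. Viewing $\phi$ merely as a continuous linear map and applying the universal property of $\wh T(A)$, I obtain a unique continuous homomorphism $\psi\colon\wh T(A)\to B$ with $\psi\circ j_A=\phi$. The key step is to show that $\psi$ annihilates the generators of $J$: since $\psi$ and $\phi$ are unital, $\psi(1_{\wh T}-1_A)=1_B-\phi(1_A)=0$; and since $\phi$ is multiplicative, $\psi(a_1\otimes a_2-a_1a_2)=\phi(a_1)\phi(a_2)-\phi(a_1a_2)=0$. As $\psi$ is a continuous homomorphism and $B$ is complete and Hausdorff, it then vanishes on the whole closed two-sided ideal $J$; hence it factors through a continuous homomorphism on $\wh T(A)/J$, which extends over the completion to give $\wt\phi\colon\wt A\to B$ with $\wt\phi\circ\iota=\phi$.

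Uniqueness of $\wt\phi$ follows because $\iota(A)$ topologically generates $\wt A$: indeed $j_A(A)$ generates a dense subalgebra of $\wh T(A)$, and this density is preserved by the quotient and completion maps, so any two continuous homomorphisms out of $\wt A$ agreeing on $\iota(A)$ must coincide. I do not anticipate a serious obstacle here --- the proposition is essentially a formal consequence of the universal property of $\wh T(A)$ together with the observation that a homomorphism, unlike a bare linear map, is exactly what is needed to kill the relations generating $J$. The only points requiring a little care are that passing to the quotient by a closed ideal and then completing keeps us inside the Arens--Michael class, and that the generating set $\iota(A)$ is genuinely dense.
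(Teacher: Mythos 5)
Your proof is correct. The paper does not actually write out an argument for this proposition --- it only declares the proof ``straightforward'', with a pointer to the analogous Proposition~4.2 of \cite{Pi15} --- and your verification is precisely the intended routine argument: factor a given continuous homomorphism through $\wh T(A)$ via its universal property, check that the resulting homomorphism kills the generators of $J$ (hence all of $J$, by continuity), pass to the quotient and then to the completion, and obtain uniqueness from the density in $\wt A$ of the subalgebra generated by the image of $j_A(A)$.
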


The following result is proved the case of a Fr\'echet space in \cite[Theorem 3.1]{Vo09}.

\begin{pr}\label{nucltensal}
If $X$ is nuclear then $\wh T(X)$ is nuclear.
\end{pr}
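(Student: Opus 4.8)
The plan is to describe $\wh T(X)$ explicitly through its defining seminorms and then verify nuclearity by Grothendieck's local criterion on the associated local Banach spaces. Fix a directed family $\{\|\cdot\|_\alpha\}_\alpha$ of seminorms generating the topology of~$X$, write $\wh X_\alpha$ for the local Banach space (the completion of $X/\ker\|\cdot\|_\alpha$), and for each $\alpha$ let $\|\cdot\|_{\alpha,n}$ be the $n$-fold projective tensor power seminorm on $X^{\ptn n}$. For every $\alpha$ and every $\rho>0$ the formula
$$\|(x_n)\|_{\alpha,\rho}:=\sum_{n\ge0}\rho^n\|x_n\|_{\alpha,n}$$
defines a submultiplicative seminorm on the algebraic tensor algebra $T(X)=\bigoplus_n X^{\ptn n}$, submultiplicativity coming from the cross-seminorm inequality $\|x_i\otimes y_j\|_{\alpha,i+j}\le\|x_i\|_{\alpha,i}\|y_j\|_{\alpha,j}$ for the projective tensor product together with the $\ell^1$ structure in~$n$. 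These seminorms cofinally generate the Arens-Michael topology, so $\wh T(X)$ is the completion of $T(X)$ with respect to $\{\|\cdot\|_{\alpha,\rho}\}$, and its local Banach space at $(\alpha,\rho)$ is the weighted $\ell^1$-sum
$$\wh T(X)_{\alpha,\rho}=\Big\{(x_n):x_n\in\wh X_\alpha^{\ptn n},\ \sum_n\rho^n\|x_n\|<\infty\Big\},$$
since completing $X^{\ptn n}$ in $\|\cdot\|_{\alpha,n}$ yields the projective tensor power $\wh X_\alpha^{\ptn n}$ of the Banach space $\wh X_\alpha$.

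By Grothendieck's criterion it then suffices, given $(\alpha,\rho)$, to produce $(\beta,\sigma)$ with $\|\cdot\|_{\alpha,\rho}\le\|\cdot\|_{\beta,\sigma}$ for which the canonical linking map $\wh T(X)_{\beta,\sigma}\to\wh T(X)_{\alpha,\rho}$ is nuclear. Since $X$ is nuclear, I first choose $\beta$ with $\|\cdot\|_\alpha\le\|\cdot\|_\beta$ such that the linking map $\phi\!:\wh X_\beta\to\wh X_\alpha$ is nuclear, and write $c$ for its nuclear norm. The next ingredient is that tensor powers of a nuclear map are nuclear with nuclear norm at most $c^n$; this follows at once by multiplying out a nuclear representation $\phi=\sum_k\langle\,\cdot\,,\psi_k\rangle\,y_k$ slotwise and letting the error tend to zero.

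It remains to estimate the nuclear norm of the componentwise map $T=(\phi^{\ptn n})_n$ between the two weighted $\ell^1$-sums. A coordinate functional $(x_m)\mapsto\langle x_n,\eta\rangle$ on the $\sigma$-weighted source has norm $\sigma^{-n}\|\eta\|$, whereas an element placed in the $n$-th slot of the $\rho$-weighted target has norm $\rho^n\|\cdot\|$; assembling the nuclear representations of the maps $\phi^{\ptn n}$ slotwise therefore yields
$$\nu(T)\le\sum_{n\ge0}(\rho/\sigma)^n\,\nu(\phi^{\ptn n})\le\sum_{n\ge0}(\rho c/\sigma)^n.$$
Choosing $\sigma\ge\rho$ with $\sigma>\rho c$ makes this geometric series converge, so $T$ is nuclear while $\|\cdot\|_{\alpha,\rho}\le\|\cdot\|_{\beta,\sigma}$ holds, and nuclearity of $\wh T(X)$ follows.

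The main obstacle I anticipate is organizational rather than conceptual: one must pin down that the local Banach spaces of $\wh T(X)$ are exactly the weighted $\ell^1$-sums of the $\wh X_\alpha^{\ptn n}$ and that the seminorms $\|\cdot\|_{\alpha,\rho}$ are cofinal among the submultiplicative seminorms defining the Arens-Michael topology — both implicit in Pirkovskii's construction \cite[Proposition~4.8]{Pir_qfree}. Granting that, the decisive point is the purely quantitative interaction between the geometric bound $\nu(\phi^{\ptn n})\le c^n$ and the weight ratio $\rho/\sigma$, which forces convergence precisely once the weight gap exceeds the nuclear norm~$c$. I emphasize that this direct local-Banach-space argument covers arbitrary complete nuclear~$X$ — in particular the nuclear (DF)-spaces $H'$ needed in the application — and is not restricted to the Fr\'echet case of \cite[Theorem~3.1]{Vo09}.
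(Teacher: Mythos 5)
Your proof is correct and follows essentially the same route as the paper: both use Pirkovskii's explicit description of $\wh T(X)$ via the weighted $\ell^1$-type seminorms $\|x\|_{\nu,\rho}=\sum_n\|x_n\|_\nu^{\otimes n}\rho^n$, pick a nuclear linking map of $X$ with nuclear norm $c$, bound the nuclear norms of its tensor powers by $c^n$, and then choose the second weight larger than $c$ times the first so that the geometric series controlling the nuclear norm of the linking map between local Banach spaces converges. The only difference is that you spell out the ``straightforward calculation'' the paper leaves implicit.
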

\begin{proof}
We need the explicit construction of $\wh T(X)$  from \cite{Pir_qfree}.
Let $\{\|\cdot\|_\nu\!:\, \nu \in \La\}$ be a directed system of prenorms defining the topology on~$X$. Denote by $\|\cdot\|_\nu^{\otimes n}$  the projective tensor product of~$n$ copies of $\|\cdot\|_\nu$.
Then
$\wh T(X)$ consists of all sequences $(x_n;\,n\in\Z_+)$, where $x_n\in X^{\ptn n}$ (with $X^{\ptn 0}\!:=\CC$), such that for every $\nu \in \La$ and $\rho>0$
$$
\|x\|_{\nu,\rho}\!:=\sum_{n=0}^{\infty} \|x_n\|_\nu^{\otimes n}\rho^n<\infty.
$$

Denote by $X_{\nu,\rho}$ the completion of $X$ with respect to $\|\cdot\|_{\nu,\rho}$ and for $\nu'\ge \nu$ and  $\rho'\ge \rho$ denote by $J_{\nu'\rho'}^{\nu\rho}$ the corresponding linking map $X_{\nu',\rho'}\to X_{\nu,\rho}$.
To prove that $\wh T(X)$ is nuclear it suffices to show that for any $\nu$ and $\rho$ there are $\nu'\ge \nu$ and  $\rho'\ge \rho$ such that $J_{\nu'\rho'}^{\nu\rho}$ is a nuclear map.

Denote also by $X_{\nu}$ the completion of $X$ with respect to $\|\cdot\|_{\nu}$ and by $j_{\nu'}^{\nu}$ the linking maps.
Since $X$ is nuclear, there is $\nu'\ge \nu$ such that $j_{\nu'}^{\nu}$ is a nuclear map. Denote the nuclear norm of $j_{\nu'}^{\nu}$ by~$N$.
Then for every $n\in\Z_+$ the map
$(j_{\nu'}^{\nu})^{\otimes n}\!: X_{\nu',\rho'}^{\ptn n}\to X_{\nu,\rho}^{\ptn n}$ is a nuclear map with nuclear norm at most~$N^n$.
Take $\rho'$ such that $\rho'>N \rho$.  A~straightforward calculation shows that $J_{\nu'\rho'}^{\nu\rho}$ is nuclear  with nuclear norm at most~$1$.
\end{proof}

\begin{thm}\label{nuclAMe}
If  a $\ptn$-algebra $A$ is nuclear, then so is its Arens-Michael envelope.
\end{thm}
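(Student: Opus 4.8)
The plan is to deduce Theorem~\ref{nuclAMe} directly from the two preparatory propositions just established, namely Proposition~\ref{AMtensa} (which realizes the Arens-Michael envelope $\wh A$ as a completed quotient of the analytic tensor algebra $\wh T(A)$) and Proposition~\ref{nucltensal} (which guarantees that $\wh T(X)$ is nuclear whenever $X$ is). Thus the strategy is to show that nuclearity passes from $\wh T(A)$ to the completed quotient $\wt A$ described in Proposition~\ref{AMtensa}, and then invoke that $\wt A$ is the Arens-Michael envelope of~$A$.

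First I would observe that since $A$ is nuclear as a locally convex space, Proposition~\ref{nucltensal} applies and yields that the analytic tensor algebra $\wh T(A)$ is nuclear. Next I would recall the standard permanence properties of nuclear spaces: a quotient of a nuclear space by a closed subspace is nuclear, and the completion of a nuclear space is nuclear (see, e.g., any standard reference on nuclear locally convex spaces). By Proposition~\ref{AMtensa}, the Arens-Michael envelope $\wh A$ is topologically isomorphic to $\wt A$, the completion of $\wh T(A)/J$, where $J$ is the closed two-sided ideal generated by $1_{\wh T}-1_A$ and all elements $a_1\otimes a_2-a_1 a_2$. Since $J$ is closed, the quotient $\wh T(A)/J$ is a quotient of a nuclear space by a closed subspace, hence nuclear; and its completion $\wt A$ is therefore also nuclear.

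Finally I would conclude that $\wh A\cong\wt A$ is nuclear, which is precisely the assertion of the theorem. The entire argument is a short concatenation of the two propositions with the permanence of nuclearity under quotients and completions, so there is essentially no computational content to grind through.

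I do not anticipate a genuine obstacle here, since all the heavy lifting has been done in Proposition~\ref{nucltensal} (the nuclearity of the analytic tensor algebra, which is the substantive step). The only point requiring minor care is to ensure that the relevant permanence properties of nuclear spaces are invoked in the correct form: one needs that \emph{quotients by closed subspaces} and \emph{completions} preserve nuclearity, both of which are classical. If one wanted to be scrupulous, the mild subtlety would be to confirm that the quotient in Proposition~\ref{AMtensa} is taken by a \emph{closed} ideal (which it is, being generated as a closed ideal), so that the quotient is Hausdorff and the standard permanence theorem for nuclear spaces applies without modification.
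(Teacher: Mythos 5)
Your proposal is correct and follows exactly the paper's own argument: apply Proposition~\ref{nucltensal} to get nuclearity of $\wh T(A)$, use the permanence of nuclearity under quotients by closed subspaces and completions to conclude that $\wt A$ is nuclear, and identify $\wt A$ with the Arens-Michael envelope via Proposition~\ref{AMtensa}. No gaps; your remark about the ideal being closed is the same (implicit) point the paper relies on.
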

\begin{proof}
By Proposition~\ref{nucltensal},  $\wh T(A)$ is nuclear. Since the class of nuclear spaces is stable under quotients by closed subspaces and under completions, $\wt A$ is nuclear. An application of  Proposition~\ref{AMtensa} completes the proof.
\end{proof}

\begin{co}\label{conuclAMe}
If $H$ is an NFAM Hopf algebra, then
$H^\bullet$  is nuclear.
\end{co}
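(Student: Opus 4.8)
The plan is to unwind the definition of the holomorphic dual and reduce the statement to Theorem~\ref{nuclAMe}. Recall from Definition~\ref{Homduldef} that, for an NFAM Hopf algebra $H$, the holomorphic dual $H^\bullet$ is by definition the Arens-Michael envelope $\wh{H'}$ of the strong dual space $H'$. Thus, once I can exhibit $H'$ as a \emph{nuclear} $\ptn$-algebra, the corollary follows at once from Theorem~\ref{nuclAMe}, which asserts that the Arens-Michael enveloping functor preserves nuclearity.

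First I would record that $H'$ carries a canonical $\ptn$-algebra structure. Since $H$ is an NFAM Hopf algebra, it is in particular a well-behaved (nuclear Fr\'echet) Hopf $\ptn$-algebra, so the BFGP duality recalled in the Introduction endows $H'$ with the structure of a well-behaved Hopf $\ptn$-algebra (the comultiplication of $H$ dualizing to a jointly continuous multiplication on $H'$ via the canonical isomorphism $H'\ptn H'\cong (H\ptn H)'$). In particular $H'$ is a $\ptn$-algebra, which is what Theorem~\ref{nuclAMe} requires as input.

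The only remaining point is that $H'$ is nuclear as a locally convex space. This is exactly the classical duality between nuclear Fr\'echet spaces and complete nuclear (DF)-spaces recalled in the Introduction: the strong dual of a nuclear Fr\'echet space is a complete nuclear (DF)-space, hence nuclear. Since $H$ is nuclear Fr\'echet by hypothesis, $H'$ is nuclear.

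Putting these together, I would apply Theorem~\ref{nuclAMe} to the nuclear $\ptn$-algebra $A=H'$ and conclude that its Arens-Michael envelope $H^\bullet=\wh{H'}$ is nuclear, as desired. There is no genuine obstacle here: the statement is a formal consequence of Theorem~\ref{nuclAMe} together with the stability of nuclearity under strong duals of Fr\'echet spaces, and the verification that $H'$ is a $\ptn$-algebra is supplied by the BFGP duality already invoked earlier in the paper.
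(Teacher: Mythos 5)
Your proposal is correct and follows exactly the paper's own argument: identify $H^\bullet=(H')\sphat$\,, observe that $H'$ is a nuclear $\ptn$-algebra (since $H$ is nuclear Fr\'echet and BFGP duality gives the algebra structure), and apply Theorem~\ref{nuclAMe}. You simply spell out in more detail the two facts the paper compresses into its first sentence.
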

\begin{proof}
Since $H$ is a nuclear Fr\'echet space, $H'$ is a nuclear $\ptn$-algebra.
By~\eqref{defhold}, $H^\bullet\!:=(H')\sphat$~. So we can apply Theorem~\ref{nuclAMe}.
\end{proof}


\begin{thebibliography}{CCJJVVXX}
\bibitem[Ak08]{Ak08}
S.\,S.~Akbarov,  \emph{Holomorphic functions of exponential type and duality for Stein groups with algebraic connected component of identity} (Russian), Fundam. Prikl. Mat., 14:1 (2008), 3--178; English transl. J. Math. Sci., 162:4 (2009), 459--586.

\bibitem[Ak17]{Ak17A}
S.\,S.~Akbarov,  \emph{Continuous and smooth envelopes of topological algebras. Part~1} (Russian), Functional analysis, Itogi Nauki i Tekhniki. Ser. Sovrem. Mat. Pril. Temat. Obz., 129, VINITI, Moscow, 2017, 3--133; English transl. J. Math. Sci. (N. Y.) 227:5, 531--668 (2017).

\bibitem[Ar19]{ArAnF}
O.\,Yu.~Aristov, \emph{Holomorphic functions of exponential type on
connected complex Lie groups}, J. Lie Theory 29:4 (2019),
1045--1070, arXiv:1903.08080 [math.RT, math.FA].

\bibitem[Ar20a]{ArAMN}
O.\,Yu.~Aristov, \emph{Arens-Michael envelopes of nilpotent Lie
algebras, functions of exponential type, and homological
epimorphisms}, Trans. Moscow Math. Soc. 81:1 (2020) to appear, arXiv:1810.13213 [math.FA].

\bibitem[Ar20b]{AHHFG}
O.\,Yu.~Aristov, \emph{Holomorphically finitely generated Hopf algebras and quantum Lie groups}, arXiv:2006.12175.

\bibitem[Bl06]{Bl06}
B.~Blackadar, \emph{Operator Algebras.
Theory of $C^*$-Algebras and von Neumann Algebras}, Encyclopaedia of Mathematical Sciences, 122, Springer, 2006.

\bibitem[BS12]{BS12}
V.\,I.~Bogachev,  O.\,G.~Smolyanov,
Topological Vector Spaces
and Their Applications, Moscow-Izhevsk, 2012, English transl.: Springer  2017.

\bibitem[BFGP94]{BFGP}
P.~Bonneau, M.~Flato, M.~Gerstenhaber, G.~Pinczon,  \emph{The hidden group
structure of quantum groups: strong duality, rigidity and preferred
deformations}, Comm. Math. Phys. 161 (1994), 125--156.

\bibitem[Da00]{Da00}
H.\,G.~Dales,  \emph{Banach algebras and automatic continuity. London Mathematical
Society Monographs}.  New Series, 24. Oxford Science Publications. The
Clarendon Press, Oxford University Press, New York, 2000.

\bibitem[Fo67]{Fo67}
O.~Forster, \emph{Zur Theorie der Steinschen Algebren und Moduln},
Math.~Z. 97 (1967), 376--405.

\bibitem[He93]{He93}
A. Ya.~Helemskii, \emph{Banach and Polynormed Algebras: General
Theory, Representations, Homology}, Nauka, Moscow, 1989
(Russian); English transl.: Oxford University Press, 1993.

\bibitem[HN11]{HiNe}
J.~Hilgert, K.-H.~Neeb, \emph{Structure and geometry of Lie
groups}, Springer, 2011.

\bibitem[Li72]{Lit}
G.\,V~Litvinov, \emph{Group representations in locally convex spaces, and topological group
algebras} (Russian), Trudy Sem. Vektor. Tenzor. Anal. 16 (1972), 267--349.
English transl.:  Selecta Math. Soviet. (Birkh\"{a}user Publ.) 7 (1988), no.
2, 101--182.

\bibitem[Li78]{Lit2}
G.\,L.~Litvinov, \emph{Dual topological algebras and topological Hopf algebras} (Russian), Trudy
Sem. Vektor. Tenzor. Anal. 18 (1978), 372--375. English transl.: Selecta
Math. Soviet. (Birkh\"{a}user Publ.) 10 (1991), no. 4, 339--343.


\bibitem[Ni13]{Ni13}
B.~Nica, \emph{Linear groups --- Malcev's theorem and Selberg's lemma}, arXiv:1306.2385.

\bibitem[Pa94]{Pa1}
T.\,W.~Palmer, \emph{Banach algebras and the general theory of $*$-algebras: Vol. I, Algebras and Banach algebras} Cambridge University Press, Cambridge, 1994.

\bibitem[Pi06]{Pir_stbflat}
A.\,Yu.~ Pirkovskii, \emph{Stably flat completions of universal enveloping algebras},
Dissertationes Math. (Rozprawy Math.) 441 (2006), 1--60.

\bibitem[Pi08]{Pir_qfree}
A.\,Yu.~Pirkovskii, \emph{Arens-Michael envelopes, homological epimorphisms, and
relatively quasi-free algebras}, (Russian), Tr. Mosk. Mat. Obs. 69
(2008), 34--125; English transl.: Trans. Moscow Math. Soc. (2008), 27--104.

\bibitem[Pi15]{Pi15}
A.\,Yu.~Pirkovskii, \emph{Holomorphically finitely generated
algebras}, J. Noncommutative Geom. 9:1 (2015), 215--264.

\bibitem[Va99]{Va99}
N.\,T.~Varopoulos, \emph{Distance distortion on Lie groups}, in \emph{Random
Walks and Discrete Potential Theory}, (M. Picardello and W. Woess,
Eds.) Cambridge Univ. Press, Cambridge, 1999.

\bibitem[Vo09]{Vo09}
D.~Vogt, The tensor algebra of power series spaces
Studia Math. 193 (2009), 189--202.

\end{thebibliography}
\end{document}